\documentclass[oneside,english]{amsart}
\usepackage[T1]{fontenc}
\usepackage[latin9]{inputenc}
\usepackage{amsthm}
\usepackage{amssymb}

\makeatletter
\numberwithin{equation}{section}
\numberwithin{figure}{section}
\theoremstyle{plain}
\newtheorem{thm}{\protect\theoremname}
  \theoremstyle{plain}
  \newtheorem{fact}[thm]{\protect\factname}
  \theoremstyle{remark}
  \newtheorem{rem}[thm]{\protect\remarkname}
  \theoremstyle{plain}
  \newtheorem{lem}[thm]{\protect\lemmaname}
  \theoremstyle{plain}
  \newtheorem{cor}[thm]{\protect\corollaryname}
  \theoremstyle{plain}
  \newtheorem{prop}[thm]{\protect\propositionname}
  \theoremstyle{definition}
  \newtheorem{defn}[thm]{\protect\definitionname}
  \theoremstyle{remark}
  \newtheorem*{claim*}{\protect\claimname}
  \theoremstyle{definition}
  \newtheorem{example}[thm]{\protect\examplename}
  \theoremstyle{definition}
  \newtheorem{problem}[thm]{\protect\problemname}
  \theoremstyle{remark}
  \newtheorem{claim}[thm]{\protect\claimname}

\usepackage[mathcal,mathbf]{euler}
\thanks{The first author was supported by the Marie Curie Initial Training Network in Mathematical Logic - MALOA - From MAthematical LOgic to Applications, PITN-GA-2009-238381}

\makeatother

\usepackage{babel}
  \providecommand{\claimname}{Claim}
  \providecommand{\corollaryname}{Corollary}
  \providecommand{\definitionname}{Definition}
  \providecommand{\examplename}{Example}
  \providecommand{\factname}{Fact}
  \providecommand{\lemmaname}{Lemma}
  \providecommand{\problemname}{Problem}
  \providecommand{\propositionname}{Proposition}
  \providecommand{\remarkname}{Remark}
\providecommand{\theoremname}{Theorem}

\begin{document}
\global\long\def\NIP{\operatorname{NIP}}

\global\long\def\IP{\operatorname{IP}}

\global\long\def\P{\operatorname{\mathbf{P}}}

\global\long\def\ind{\operatorname{ind}}

\global\long\def\eq{\operatorname{eq}}

\global\long\def\fs{\operatorname{fs}}

\global\long\def\acl{\operatorname{acl}}

\global\long\def\dcl{\operatorname{dcl}}

\global\long\def\tp{\operatorname{\mbox{tp}}}

\global\long\def\bdd{\operatorname{bdd}}

\global\long\def\Th{\operatorname{\mbox{Th}}}

\global\long\def\nfcp{\operatorname{\mbox{nfcp}}}

\global\long\def\UDTFS{\operatorname{UDTFS}}

\global\long\def\NFCP{\operatorname{nfcp}}

\global\long\def\dnfcp{\operatorname{dnfcp}}

\global\long\def\M{\operatorname{\mathbb{M}}}

\global\long\def\VC{\operatorname{VC}}

\global\long\def\VCdim{\operatorname{VC}}

\global\long\def\VCcodim{\operatorname{coVC}}

\title{Externally definable sets and dependent pairs II}

\author{Artem Chernikov
\and
Pierre Simon}
\begin{abstract}
We continue investigating the structure of externally definable sets
in $\NIP$ theories and preservation of $\NIP$ after expanding by
new predicates. Most importantly: types over finite sets are uniformly
definable; over a model, a family of non-forking instances of a formula
(with parameters ranging over a type-definable set) can be covered
with finitely many invariant types; we give some criteria for the
boundedness of an expansion by a new predicate in a distal theory;
naming an arbitrary small indiscernible sequence preserves $\NIP$,
while naming a large one doesn't; there are models of $\NIP$ theories
over which all 1-types are definable, but not all n-types.
\end{abstract}
\maketitle

\section*{Introduction}

A characteristic property of stable theories is the definability of
types. Equivalently, every externally definable set is internally
definable. In unstable theories this is no longer true. However, as
was observed early on by Shelah (e.g. \cite{Sh783}), the class of
externally definable sets in $\NIP$ theories satisfies some nice
properties resembling those in the stable case (e.g. it is closed
under projection). In this paper we continue the investigation of
externally definable sets in $\NIP$ theories started in \cite{ExtDefI}.

As it was established there, every externally definable set $X=\phi(x,b)\cap A$
has an \emph{honest definition}, which can be seen as the existence
of a uniform family of internally definable subsets approximating
$X$. Formally, there is $\theta(x,z)$ such that for any \emph{finite}
$A_{0}\subseteq X$ there is some $c\in A$ satisfying $A_{0}\subseteq\theta(A,c)\subseteq A$.
The first section of this paper is devoted to establishing the existence
of \emph{uniform} honest definitions. By uniform we mean that $\theta(x,z)$
can be chosen depending just on $\phi(x,y)$ and not on $A$ or $b$.
We achieve this assuming that the whole theory is $\NIP$, combining
careful use of compactness with a strong combinatorial result of Alon-Kleitman
\cite{MR1185788} and Matousek \cite{MR2060639}: the $\left(p,k\right)$-theorem.
As a consequence we conclude that in an $\NIP$ theory types over
finite sets are uniformly definable ($\UDTFS$). This confirms a conjecture
of Laskowski.

In the next section we consider an implication of the $(p,k)$-theorem
for forking in $\NIP$ theories. Combined with the results on forking
and dividing in $\NIP$ theories from \cite{CheKap}, we deduce the
following: working over a model $M$, let $\left\{ \phi(x,a)\,:\, a\models q(y)\right\} $
be a family of non-forking instances of $\phi(x,y)$, where the parameter
$a$ ranges over the set of solutions of a partial type $q$. Then
there are finitely many global $M$-invariant types such that each
$\phi(x,a)$ from the family belongs to one of them.

In Section 3 we return to the question of naming subsets with a new
predicate. In \cite{ExtDefI} we gave a general condition for the
expansion to be $\NIP$: it is enough that the theory of the pair
is \emph{bounded}, i.e. eliminates quantifiers down to the predicate,
and the induced structure on the predicate is NIP. Here, we try to
complement the picture by providing a general sufficient condition
for the boundedness of the pair. In the stable case the situation
is quite neatly resolved using the notion of $\nfcp$. However $\nfcp$
implies stability, so one has to come up with some generalization
of it that is useful in unstable $\NIP$ theories. Towards this purpose
we introduce \emph{dnfcp}, i.e. no finite cover property for definable
sets of parameters, and its relative version with respect to a set.
We also introduce dnfcp' -- a weakening of dnfcp with separated variables.
Using it, we succeed in the distal, stably embedded, case: if one
names a subset of $M$ which is small, uniformly stably embedded and
the induced structure satisfies dnfcp', then the pair is bounded.

In section 4 we look at the special case of naming an indiscernible
sequence. On the one hand, we complement the result in \cite{ExtDefI}
by showing that naming a small indiscernible sequence of \emph{arbitrary}
order type is bounded and preserves $\NIP$. On the other hand, naming
a large indiscernible sequence does not.

In the last section we consider models over which all types are definable.
While in general even $o$-minimal theories may not have such models,
many interesting $\NIP$ theories do ($RCF$, $ACVF$, $\Th(\mathbb{Q}_{p})$,
Presburger arithmetic...). In practice, it is often much easier to
check definability of $1$ types, as opposed to $n$-types, so it
is natural to ask whether one implies the other. Unfortunately, this
is not true -- we give an $\NIP$ counter-example. Can anything be
said on the positive side? Pillay \cite{MR2830421} had established:
let $M$ be NIP, $A\subseteq M$ be definable with rosy induced structure.
Then if it is 1-stably embedded, it is stably embedded. We observe
that Pillay's results holds when the definable set $A$ is replaced
with a model, assuming that it is \emph{uniformly} 1-stably embedded.
This provides a generalization of the classical theorem of Marker
and Steinhorn about definability of types over models in $o$-minimal
theories. We also remark that in $\NIP$ theories, there are arbitrary
large models with ``few'' types over them (i.e. such that $\left|S(M)\right|\leq\left|M\right|^{\left|T\right|}$).

\section*{Preliminaries}

\subsection{VC dimension, co-dimension and density}

Let $\mathcal{F}$ be a family of subsets of some set $X$. Given
$A\subseteq X$, we say that it is \emph{shattered} by $\mathcal{F}$
if for every $A'\subseteq A$ there is some $S\in\mathcal{F}$ such
that $A\cap S=A'$.

A family $\mathcal{F}$ is said to have finite \emph{$\VC$-dimension}
if there is some $n\in\omega$ such that no subset of $X$ of size
$n$ can be shattered by $\mathcal{F}$. In this case we let $\VCdim(\mathcal{F})$
be the largest integer $n$ such that some subset of $X$ of size
$n$ is shattered by it.

The $\VC$ co-dimension of $\mathcal{F}$ is the largest integer $n$
for which there are $S_{1},...,S_{n}\in\mathcal{F}$ such that for
any $u\subseteq n$ there is $b_{u}\in X$ satisfying $b_{u}\in S_{i}\Leftrightarrow i\in u$.
It is well known that $\VCcodim(\mathcal{F})<2^{\VCdim(\mathcal{F})+1}$.

\subsection{NIP and alternation}

We are working in a monster model $\M$ of a complete first-order
theory $T$.

Recall that a formula $\phi(x,y)$ is $\NIP$ if there are no $\left(a_{t}\right)_{t\in\omega}$
and $\left(b_{s}\right)_{s\subseteq\omega}$ such that $\phi(a_{t},b_{s})\Leftrightarrow t\in s$.
Equivalently, for any indiscernible sequence $\left(a_{t}\right)_{t\in I}$
and $b$, there can be only finitely many $t_{0}<...<t_{n}\in I$
such that $\phi(a_{t_{i}},b)\Leftrightarrow i$ is even. The following
is a very important refinement of this statement, see e.g. \cite[Theorem 14]{Adl}. 

Let $\left(a_{t}\right)_{t\in I}$ be an indiscernible sequence and
let $E$ be a convex equivalence relation on $I$. If $\bar{t}=\left(t_{i}\right)_{i<\kappa}$
and $\bar{s}=\left(s_{i}\right)_{i<\kappa}$ are tuples of elements
from $I$, we will write $\bar{t}\sim_{E}\bar{s}$ if $\bar{t}$ and
$\bar{s}$ have the same quantifier-free order type and $t_{i}Es_{i}$
for all $i<\kappa$.
\begin{fact}
\label{fac: BB} Let $\left(a_{t}\right)_{t\in I}$ be an indiscernible
sequence and let $b$ be any finite tuple. Let $\phi(x_{0},...,x_{n};y)$
be $\NIP$. Then there is a convex equivalence relation $E$ on $I$
with finitely many classes such that for any $\left(s_{i}\right)_{i\leq n}\sim_{E}\left(t_{i}\right)_{i\leq n}$
from $I$ we have $\phi(a_{s_{0}},...,a_{s_{n}};b)\leftrightarrow\phi(a_{t_{0}},...,a_{t_{n}};b)$.\end{fact}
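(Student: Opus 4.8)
The plan is to induct on $n$ and reduce everything to the single-variable alternation property recalled just above (the ``equivalently'' clause). Some bookkeeping first: there are only finitely many quantifier-free order types $\tau$ of $(n{+}1)$-tuples, the relation $\bar s\sim_E\bar t$ forces $\bar s$ and $\bar t$ to realize the same $\tau$, and the common refinement of finitely many convex equivalence relations with finitely many classes is again one; so it suffices to produce, for each $\tau$ separately, a convex equivalence relation with finitely many classes that works for tuples of type $\tau$, and then intersect. If $\tau$ identifies some of the variables, substituting them turns $\phi$ into a formula with fewer object variables, still NIP, so the inductive hypothesis applies directly; hence only the ``all distinct'' types matter, and after permuting the $x_i$ we may assume we are comparing increasing tuples $s_0<\dots<s_n$ and $t_0<\dots<t_n$.

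For $n=0$ this is exactly the alternation clause: $\{t\in I:\ \models\phi(a_t;b)\}$ is a finite union of convex subsets of $I$, with the number of blocks bounded by the alternation number of $\phi$, and in particular independent of $b$; take $E$ to be the induced partition of $I$ into convex blocks.

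For the inductive step I would peel off the top coordinate. Fixing $i_n$ and feeding $a_{i_n}$ into a parameter slot, the inductive hypothesis applied to $\psi(x_0,\dots,x_{n-1};y,z):=\phi(x_0,\dots,x_{n-1},z;y)$ with parameter $(b,a_{i_n})$ gives a convex equivalence relation on the initial segment $I_{<i_n}$ whose number of classes can, as in the base case, be bounded by a constant $N$ depending only on the formula and not on $i_n$ or $b$; dually the base case bounds the number of alternations of $i_n\mapsto\phi(a_{i_0},\dots,a_{i_{n-1}},a_{i_n};b)$ by a constant $M$ independent of $(i_0,\dots,i_{n-1})$. The remaining, genuinely delicate task is to assemble these coordinatewise bounds into one finite convex equivalence relation $E$ on all of $I$: the point is that the ``cut positions'' occurring in these slice decompositions cannot drift continuously as the other coordinates vary — there is no half-graph/staircase pattern along the sequence with a fixed parameter — and I would establish this by contradiction, assuming no finite convex $E$ works, stretching the index order to a dense one, and using indiscernibility together with compactness (plus a Ramsey argument to bring the offending ``flip configurations'' into general position along the sequence) to manufacture either an infinite alternation of $\phi$, in its given partition of variables, along an indiscernible sequence of $(n{+}1)$-tuples, or an outright violation of the inductive hypothesis.

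The main obstacle is precisely this last assembly step; ruling out drifting cuts is where NIP really bites. Two further subtleties deserve flagging. First, the formula $\psi$ above regroups the object variables of $\phi$, and NIP of a formula is not in general preserved by regrouping, so one should either run the argument under the paper's standing assumption that the ambient theory is NIP — in which case every partition of every formula is NIP and the appeals to the inductive hypothesis are legitimate — or else replace those appeals by the same compactness-and-indiscernibility device used in the assembly step, which never leaves the original partition of $\phi$. Second, one must track throughout that all the bounds ($N$, $M$, and the number of classes of the final $E$) depend on $\phi$ alone and not on $b$ or on the sequence; this uniformity, already transparent in the base case, is exactly what makes the induction close.
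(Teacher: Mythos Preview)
The paper does not prove this statement; it is quoted as a Fact with a reference to \cite[Theorem 14]{Adl}, so there is no in-paper argument to compare against. Judged on its own, your proposal is an outline rather than a proof: the preliminaries (reducing to increasing tuples, the $n=0$ case) are routine, and you yourself identify the ``assembly step'' as the entire difficulty, but you only describe a plan for it (``I would establish this by contradiction\dots stretching\dots Ramsey\dots'') without carrying it out. That step \emph{is} the proof; everything else is bookkeeping.

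Moreover, the inductive scaffolding you set up is not how the cited argument proceeds and creates the regrouping problem you then have to talk your way around. The standard proof (Adler, and also Shelah) works directly for arbitrary $n$ with no induction: assuming no finite convex partition suffices, one finds, for every $m$, a pair of increasing $(n{+}1)$-tuples differing in a single coordinate, lying in the same convex block of an $m$-piece partition, on which $\phi$ flips; pigeonhole fixes the coordinate, and then one extracts an indiscernible sequence of $(n{+}1)$-tuples $(\bar a_{\bar\imath^k})_k$ from the original sequence along which $\phi(\bar x;b)$ alternates unboundedly --- contradicting NIP of $\phi$ in its \emph{given} partition, so no regrouping is ever needed. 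Your last paragraph gestures at exactly this mechanism, which suggests you see the right endgame; but the induction on $n$ and the appeal to $\psi(x_0,\dots,x_{n-1};y,z)$ are detours that add a genuine obstacle (NIP is not preserved under moving variables across the semicolon) and should be dropped in favour of the direct argument.
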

\begin{rem}
\label{rem: types over complete sequence are definable}In particular,
if $I$ is a complete linear order and $\phi(x_{0},...,x_{n};y)$
is $\NIP$, then all $\phi$-types over $I$ are definable, possibly
after adding finitely many elements extending $I$ on both sides.
Why? If $I$ is totally indiscernible, then all $\phi$-types over
it are in fact definable using just equality. If it is not, then there
is some formula giving the order on the sequence, and by Fact \ref{fac: BB},
$\phi$-types over $I$ are definable using this order (see \cite[Section 3.1]{ExtDefI}).
\end{rem}
In a natural way we define the $\VC$ dimension of a formula in a
model $M$ as $\VCdim(\phi(x,y))=\VCdim\left\{ \phi(M,a)\,:\, a\in M^{n}\right\} $.
Notice that this value does not depend on the model, so we'll talk
about $\VCdim$ dimension of $\phi$ in $T$. Similarly we define
$\VC$ co-dimension.

It was observed early on by Laskowski that $\phi(x,y)$ is $\NIP$
if and only if it has finite $\VC$ dimension, if and only if it has
finite $\VC$ co-dimension \cite{MR1171563}. We also recall an early
result of Shelah about counting types over finite sets.
\begin{fact}
\label{fac: [Shelah/Sauer]} {[}Shelah/Sauer{]} The following are
equivalent:
\begin{enumerate}
\item $\phi(x,y)$ is $\NIP$. 
\item There are $k,d\in\omega$ such that for all finite $A$, $\left|S_{\phi}(A)\right|\leq d\cdot\left|A\right|^{k}$.
\end{enumerate}
\end{fact}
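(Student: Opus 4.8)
The plan is to read this off the Sauer--Shelah lemma. The one point requiring care is that a $\phi$-type over $A$ records the parameters from $A$ in the variable $y$, so the relevant set system is the one associated with the \emph{dual} of $\phi$; one must therefore use that $\NIP$ is insensitive to exchanging $x$ and $y$, which is exactly the content of Laskowski's observation that $\VCdim(\phi)$ is finite iff $\VCcodim(\phi)$ is finite.

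\emph{$(2)\Rightarrow(1)$, by contraposition.} Suppose $\phi(x,y)$ has $\IP$, so $\VCdim(\phi)=\infty$ and hence $\VCcodim(\phi)=\infty$. Unwinding the definition of $\VC$ co-dimension: for every $n$ there are (necessarily distinct) parameters $b_{1},\dots,b_{n}$ such that for every $u\subseteq\{1,\dots,n\}$ there is $a_{u}$ with $\phi(a_{u},b_{i})\Leftrightarrow i\in u$. Put $A_{n}=\{b_{1},\dots,b_{n}\}$. Then the $\phi$-types $\tp_{\phi}(a_{u}/A_{n})$, $u\subseteq\{1,\dots,n\}$, are pairwise distinct, so $|S_{\phi}(A_{n})|\geq 2^{n}=2^{|A_{n}|}$; this rules out any bound of the form $d\cdot|A|^{k}$.

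\emph{$(1)\Rightarrow(2)$.} Assume $\phi(x,y)$ is $\NIP$ and set $d=\VCcodim(\phi)<\infty$. Fix a finite set $A$, $|A|=n$, and let $X=A^{|y|}$, so $|X|=n^{|y|}$. Since $\M$ is saturated, every consistent $\phi$-type over $A$ is realized, and for $a\in\M^{|x|}$ the type $\tp_{\phi}(a/A)$ is determined by, and determines, the set $\{b\in X:\phi(a,b)\}$. Hence $|S_{\phi}(A)|$ equals the number of distinct members of the family $\mathcal{F}_{A}=\{\,\{b\in X:\phi(a,b)\}:a\in\M^{|x|}\,\}$ of subsets of $X$. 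A subset of $X$ of size $m$ shattered by $\mathcal{F}_{A}$ would witness $\VCcodim(\phi)\geq m$, so $\VCdim(\mathcal{F}_{A})\leq d$. Now invoke the Sauer--Shelah lemma: a family of subsets of a finite set $X$ of $\VC$ dimension $\leq d$ has at most $\sum_{i\leq d}\binom{|X|}{i}\leq(|X|+1)^{d}$ members (by the standard down-shifting argument, or by induction on $|X|+d$). Therefore
\[
|S_{\phi}(A)|\ \leq\ \bigl(n^{|y|}+1\bigr)^{d}\ \leq\ 2^{d}\,n^{d\,|y|}\qquad(n\geq 1),
\]
which is statement (2) with exponent $k=d\,|y|$ and multiplicative constant $2^{d}$.

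The only genuine difficulty is the combinatorial kernel --- the Sauer--Shelah shatter-function bound; everything else is bookkeeping: the identification of $\phi$-types over $A$ with traces on $X=A^{|y|}$, the use of saturation to pass from consistent to realized types, and the elementary inequalities in the final display. It is essential to count on the parameter ($y$) side, i.e.\ to use $\VCcodim(\phi)$ rather than $\VCdim(\phi)$: shattering on the $x$ side in the $\IP$ case would only produce sets $A$ with $|S_{\phi}(A)|$ of order $\log|A|$, far too weak to contradict a polynomial bound.
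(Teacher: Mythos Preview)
Your argument is correct and is the standard one. The paper does not actually prove this statement: it is recorded as a \emph{Fact} attributed to Shelah and Sauer, with no proof given, so there is nothing in the paper to compare your approach against.

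One small point of housekeeping in your $(2)\Rightarrow(1)$ direction: you write $A_{n}=\{b_{1},\dots,b_{n}\}$ where the $b_{i}$ are $|y|$-tuples, but $S_{\phi}(A)$ in the paper takes $A$ to be a set of elements (with parameters ranging over $A^{|y|}$). Taking instead $A_{n}$ to be the set of coordinates of the $b_{i}$, you get $|A_{n}|\leq n\cdot|y|$ and still $|S_{\phi}(A_{n})|\geq 2^{n}\geq 2^{|A_{n}|/|y|}$, which is enough to kill any polynomial bound. This is cosmetic and does not affect the argument.
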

Then one defines the \emph{$\VC$ density} of $\phi$ to be the infimum
of all reals $r$ such that for some $d$, $\left|S_{\phi}(A)\right|\leq d\cdot\left|A\right|^{r}$
for all finite $A$.

\subsection{Invariant types}

Let $p(x)$ be a global type over a monster model $\M$, invariant
over some small submodel $M$. Then one naturally defines $p^{(\omega)}(x)\in S_{\omega}(\M)$,
the type of a Morley sequence in it (see \cite[Section 2]{MR2800483}
for details).
\begin{fact}
\label{fac: MS determines invariant type} Let $T$ be $\NIP$. Assume
that $p(x),q(x)$ are global types invariant over a small model $M$.
If $p^{(\omega)}|_{M}=q^{(\omega)}|_{M}$, then $p=q$.
\end{fact}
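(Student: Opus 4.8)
The plan is to argue by contradiction. Suppose $p \neq q$; then some formula $\phi(x,b)$ (with $b$ a tuple from $\M$) satisfies $\phi(x,b) \in p$ and $\neg\phi(x,b) \in q$. I would build an $M$-indiscernible sequence $(e_i)_{i<\omega}$ along which $\phi(x,b)$ changes truth value infinitely often, contradicting the fact that the $\NIP$ formula $\phi(x,y)$ can alternate only finitely often on an indiscernible sequence. The sequence will be built over the parameters $Mb$ by alternately taking realizations of (restrictions of) $p$ and of $q$.

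The key extraction from the hypothesis is this: $p^{(\omega)}|_M = q^{(\omega)}|_M$ gives $p^{(n)}|_M = q^{(n)}|_M$ for all $n$ by restriction, and moreover $p|_{M\bar c} = q|_{M\bar c}$ for every $n$ and every $\bar c \models p^{(n)}|_M$. I would prove the latter as follows: because $p$ is $M$-invariant, any $\bar c \models p^{(n)}|_M$ is $\mathrm{Aut}(\M/M)$-conjugate to a Morley sequence of $p$ over $M$ while $p$ itself is fixed by $\mathrm{Aut}(\M/M)$, so for every formula $\psi(x_0,\dots,x_{n-1},x_n)$ over $M$ one has $\psi \in p^{(n+1)}|_M$ iff $\psi(\bar c, x_n) \in p|_{M\bar c}$. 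Hence $p|_{M\bar c}$ is recovered from $p^{(n+1)}|_M$ and $\bar c$ by plugging $\bar c$ into formulas (and membership in $p^{(n+1)}|_M$ depends only on the resulting formula $\psi(\bar c,x_n)$, not on the representation $\psi$). Applying this same recovery to $q^{(n+1)}|_M$ instead --- legitimate since $\bar c \models q^{(n)}|_M$ as well --- yields $q|_{M\bar c}$, so the equality $p^{(n+1)}|_M = q^{(n+1)}|_M$ forces $p|_{M\bar c} = q|_{M\bar c}$.

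Then I would define $(e_i)_{i<\omega}$ recursively: given $(e_j)_{j<i}$, let $e_i$ realize $q|_{Mb(e_j)_{j<i}}$ if $i$ is even and $p|_{Mb(e_j)_{j<i}}$ if $i$ is odd, so that $\phi(e_i,b)$ holds exactly when $i$ is odd. To get $M$-indiscernibility I would show by induction on $k$ that every strictly increasing subtuple $(e_{i_0},\dots,e_{i_k})$ realizes $p^{(k+1)}|_M$. The case $k=0$ is just $p|_M = q|_M$. For the inductive step, put $\bar e = (e_{i_0},\dots,e_{i_k})$; it realizes $p^{(k+1)}|_M$ by hypothesis, and $e_{i_{k+1}}$ realizes $p|_{M\bar e}$ or $q|_{M\bar e}$ according to the parity of $i_{k+1}$ (restricting the type used in the construction from $Mb(e_j)_{j<i_{k+1}}$ to the smaller set $M\bar e$, which is licit because $i_0,\dots,i_k < i_{k+1}$), and by the previous step these two types coincide, so $(\bar e, e_{i_{k+1}}) \models p^{(k+2)}|_M$. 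Thus all strictly increasing subtuples of a fixed length have the same type over $M$, so $(e_i)_{i<\omega}$ is $M$-indiscernible, and since $\phi(x,b)$ alternates infinitely often along it we reach the desired contradiction; hence $p=q$.

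The step I expect to be the main obstacle is the recovery claim in the second paragraph: one has to use $M$-invariance carefully to see that $p|_{M\bar c}$ is determined by the $(n+1)$-variable type over $M$ together with $\bar c$, and that membership in $p^{(n+1)}|_M$ does not depend on the choice of representing formula. Everything after that --- the recursive construction of $(e_i)$, the indiscernibility induction, and the appeal to $\NIP$ --- is routine.
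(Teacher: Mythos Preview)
The paper states this result as a \emph{Fact} without proof; it is quoted as a known result (it appears, for instance, in Hrushovski--Pillay's work on invariant measures and in standard treatments of NIP theories). So there is no paper proof to compare against.

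Your argument is correct and is essentially the standard one. The recovery claim you single out as the potential obstacle is fine: homogeneity of $\M$ lets you move any $\bar c\models p^{(n)}|_M$ to an actual Morley sequence by some $\sigma\in\mathrm{Aut}(\M/M)$, and $M$-invariance of $p$ (and of $q$) ensures $\sigma$ carries $p|_{M\bar c}$ to $p|_{M\sigma(\bar c)}$, so the equivalence $\psi\in p^{(n+1)}|_M \Leftrightarrow \psi(\bar c,x_n)\in p|_{M\bar c}$ holds as you say, and the well-definedness check (independence of the representation $\psi$) goes through because $\forall x_n\,(\psi\leftrightarrow\psi')(\bar c,x_n)$ is an $M$-formula in $\tp(\bar c/M)=p^{(n)}|_M$. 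The inductive step showing $(\bar e,e_{i_{k+1}})\models p^{(k+2)}|_M$ likewise uses that $\bar e$ is $\mathrm{Aut}(\M/M)$-conjugate to a genuine Morley sequence; you might make that explicit, but it is implicit in what you wrote. The final contradiction with NIP via infinite alternation on an indiscernible sequence is exactly right.
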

We will use the following lemma, see \cite[Lemma 2.18]{Distal} for
a proof.
\begin{lem}
\label{lem: weak local character} Assume that $T$ is NIP. Let $a$
be given and $q(x)\in S(A')$ be invariant over $C\subset A'$. Then
there is $D$ of size $\leq|C|+|x|+|a|+|T|$ such that $C\subseteq D\subseteq A'$
and for any $b,b'\in A'$ realizing $q(x)|D$, $\tp(ab/D)=\tp(ab'/D)$.
\end{lem}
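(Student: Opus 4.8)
The plan is to argue by contradiction, using an iteration to build a long Morley sequence of $q$ carrying every potential obstruction to the conclusion, and then extracting from these obstructions a configuration forbidden by $\NIP$. Set $\kappa=|C|+|x|+|a|+|T|$ and suppose that no $D$ with $C\subseteq D\subseteq A'$ and $|D|\le\kappa$ satisfies the conclusion. I construct an increasing chain $(D_i)_{i<\kappa^{+}}$ of subsets of $A'$, continuous at limits, with $D_0=C$ and $|D_i|\le\kappa$ throughout: given $D_i$, which by assumption fails the conclusion, there are $b_i,b_i'\in A'$ both realizing $q|D_i$ (so in particular $q|D_i$ is realized in $A'$), an $L$-formula $\phi_i(x;y,z)$, and a finite tuple $d_i\in D_i$, with $\models\phi_i(b_i;a,d_i)\wedge\neg\phi_i(b_i';a,d_i)$; put $D_{i+1}=D_i\cup\{b_i,b_i'\}$. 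Since $D_i$ has size $\le\kappa$ for every $i<\kappa^{+}$, the construction runs for all $\kappa^{+}$ steps.

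Two features of this configuration will be used. First, $(b_i)_{i<\kappa^{+}}$ is a Morley sequence of $q$ over $C$ lying inside $A'$ -- indeed $b_i\models q|(C\,b_{<i})$ because $C\cup\{b_j:j<i\}\subseteq D_i$ -- so, $q$ being $C$-invariant, it is $C$-indiscernible. Second, since $b_i,b_i'\models q|D_i\supseteq q|\bigl(C\cup\{b_j,b_j':j<i\}\bigr)$, replacing any subset of the $b_i$ by the corresponding $b_i'$ again yields a Morley sequence of $q$ over $C$; hence all such ``interleavings'' are $C$-indiscernible, have the same type over $C$, and $b_i\equiv_{D_i}b_i'$. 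Thus $b_i$ and $b_i'$ are interchangeable with respect to $q$ over $C$, and are separated only through the extra parameter $a$ -- which, by $\NIP$ in the form of Fact \ref{fac: BB}, can interact with such a $C$-indiscernible configuration only through boundedly many cuts.

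To derive the contradiction I would first use the regularity of $\kappa^{+}$ and $|L|\le\kappa$ to fix one formula $\phi^{*}=\phi_i$ for all $i$ in a set of size $\kappa^{+}$, and then normalise the parameters $d_i\in C\cup\{b_j,b_j':j<i\}$. Applying the $\Delta$-system lemma to the finite sets of indices occurring in the $d_i$ (valid since $\kappa^{+}$ is regular and uncountable), together with further pigeonholing over the finitely many remaining options -- the root part of $d_i$, its $C$-part, and the pattern by which its non-root part selects from the block of the sequence lying between consecutive chosen indices -- I pass to a cofinal subsequence and enlarge $C$ to a set $C^{*}$ obtained by adjoining finitely many of the $b_j,b_j'$; then $q$ is still $C^{*}$-invariant, $|C^{*}|\le\kappa$, and along the subsequence $d_i$ is a fixed tuple from $C^{*}$ followed by a fixed-pattern selection of elements from an intervening block. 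Interleaving these reference blocks with the chosen points, and using $b_j\equiv b_j'$ to absorb primed references, one obtains a $C^{*}$-indiscernible sequence containing the $b_i$ and $b_i'$ along which the single formula $\psi(x;c)$ -- namely $\phi^{*}$ with its $C^{*}$-part of the parameter substituted -- alternates infinitely (true on every $b_i$-entry, false on every $b_i'$-entry), contradicting Fact \ref{fac: BB}. Hence some $D_i$, of size $\le\kappa$, satisfies the conclusion.

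The main obstacle is exactly this normalisation and extraction. The obstructions $\phi_i(x;a,d_i)$ come with parameters ranging over the ever-growing sets $D_i$, and, because the $D_i$ must stay of size $\le\kappa$, the iteration is only $\kappa^{+}$ long -- far too short for a blind Erd\H{o}s--Rado extraction of an indiscernible subsequence. So the needed indiscernibility has to be obtained by hand: the Morley sequence $(b_i)$ is $C$-indiscernible for free, and the real work is to incorporate the $b_i'$ and the varying parameters $d_i$ into one indiscernible picture, which is what the $\Delta$-system argument, the finite pigeonholing, and the freedom to swap each $b_i$ for $b_i'$ accomplish; once that picture is in place, $\NIP$ closes the argument immediately.
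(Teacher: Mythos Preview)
The paper does not give its own proof of this lemma; it simply refers the reader to \cite[Lemma 2.18]{Distal}. So there is nothing to compare line-by-line.

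Your argument is correct. The crucial observation---that for any choice function $j\mapsto c_j\in\{b_j,b_j'\}$ the resulting sequence is still a Morley sequence of $q$ over $C$ (and over any $C^*\supseteq C$ contained in an initial $D_j$), hence $C^*$-indiscernible---is exactly what is needed, and once the $\Delta$-system/pigeonhole normalisation produces blocks $J_{i_\alpha}\setminus R<i_\alpha<J_{i_{\alpha+1}}\setminus R$ in increasing order with a fixed pattern, the block-sequence is indiscernible over $C^*$ and the single formula $\phi^*(\,\cdot\,;a,c^*,r^*,\cdot)$ alternates along it. That suffices.

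That said, you are working harder than necessary. The $\Delta$-system manoeuvre is forced on you only because you allow the witnessing parameter $d_i$ to range over $D_i$, which grows. A cleaner route is to first prove the weaker statement with the conclusion over $C$ rather than over $D$: there is $D^1\supseteq C$ of size $\le\kappa$ such that all $b,b'\in A'$ realizing $q|D^1$ satisfy $\tp(ab/C)=\tp(ab'/C)$. In that version the witnessing parameter $d_i$ lives in the \emph{fixed} set $C$, so after pigeonholing the formula you can also pigeonhole $d_i$ to a single $c\in C$ (only $\le\kappa$ choices); then an alternating interleaving of $b_i$'s and $b_i'$'s is a $C$-indiscernible sequence on which $\phi(x;a,c)$ alternates, contradicting $\NIP$ immediately---no $\Delta$-system, no block bookkeeping. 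Having this, iterate: apply it with $C$ replaced by $D^n$ to get $D^{n+1}$, and set $D=\bigcup_{n<\omega}D^n$. For $b,b'\models q|D$ one gets $\tp(ab/D^n)=\tp(ab'/D^n)$ for every $n$, hence $\tp(ab/D)=\tp(ab'/D)$. This is (to my knowledge) closer to the argument in \cite{Distal}, and isolates the use of $\NIP$ much more transparently.
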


\subsection{(p,k)-theorem}

We will need the following theorem from \cite{MR2060639}.
\begin{fact}
\label{Fac: (p,k)-theorem}{[}$(p,k)$-theorem{]} Let $\mathcal{F}$
be a family of subsets of some set $X$. Assume that the VC co-dimension
of $\mathcal{F}$ is bounded by $k$. Then for every $p\geq k$, there
is an integer $N$ such that: for every finite subfamily $\mathcal{G}\subset\mathcal{F}$,
if $\mathcal{G}$ has the $(p,k)$-property meaning that among any
$p$ subsets of $\mathcal{G}$ some $k$ intersect, then there is
an $N$-point set intersecting all members of $\mathcal{G}$. \end{fact}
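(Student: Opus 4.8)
This is the $(p,k)$-theorem of Alon--Kleitman (for convex sets) and Matousek (in the stated generality), and the plan is to run their argument, which rests on three ingredients: the linear-programming duality between transversals and fractional matchings, a fractional Helly theorem for families of bounded dual shatter function, and the $\varepsilon$-net theorem of Haussler--Welzl. Fix $\mathcal F$ with $\VCcodim(\mathcal F)\le k$ (all members of $\mathcal F$ being, as usual, nonempty), fix $p\ge k$, and let $\mathcal G\subseteq\mathcal F$ be a finite subfamily with the $(p,k)$-property; write $m=|\mathcal G|$, the case $m<p$ being trivial. Let $\tau(\mathcal G)$ denote the least size of a transversal of $\mathcal G$, i.e.\ of a subset of $X$ meeting every member; the goal is $\tau(\mathcal G)\le N(p,k)$. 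First I would pass to the LP relaxation: minimizing $\sum_{x\in X}w_x$ over $w\ge 0$ with $\sum_{x\in S}w_x\ge 1$ for every $S\in\mathcal G$ is a finite linear program (points of $X$ enter only through their trace on the finite family $\mathcal G$, and the constraint matrix is $0/1$), with optimum $\tau^*(\mathcal G)\le\tau(\mathcal G)$; by LP duality $\tau^*(\mathcal G)=\nu^*(\mathcal G):=\max\{\sum_{S}y_S:\ y\ge 0,\ \sum_{S\ni x}y_S\le 1\text{ for all }x\in X\}$.

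\emph{Main step: bounding $\tau^*(\mathcal G)$.} First note that the $(p,k)$-property entails the $(p',k)$-property for every $p'\ge p$, so I may enlarge $p$ at will. I would take an optimal fractional matching $y$, normalize $y/\nu^*(\mathcal G)$ to a probability measure on $\mathcal G$, and clear denominators to get a finite multiset $\mathcal H$ of members of $\mathcal G$ of total size $M$ in which every point of $X$ lies in at most $M/\nu^*(\mathcal G)$ members (counting multiplicity). The key observation is that, after enlarging $p$, $\mathcal H$ still has the $(p,k)$-property: among any $p$ members of $\mathcal H$, either some set occurs with multiplicity $\ge k$ and those $k$ copies share a point, or every multiplicity is $<k$, so the number of \emph{distinct} sets among the $p$ chosen is at least $p/(k-1)$, which for $p$ large exceeds the original threshold, and the original property applied to those distinct sets yields $k$ that intersect. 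A double count over the $p$-element subsets of $\mathcal H$ then gives that at least a $1/\binom pk$ fraction of the $k$-element subsets of $\mathcal H$ have nonempty intersection (each $p$-subset contains an intersecting $k$-subset, and each $k$-subset lies in $\binom{M-k}{p-k}$ of the $p$-subsets). Now I would invoke Matousek's fractional Helly theorem: $\VCcodim(\mathcal F)\le k$ bounds the dual shatter function of $\mathcal F$, hence of $\mathcal H$, by $\sum_{i\le k}\binom Mi$ via Sauer--Shelah --- precisely its hypothesis --- so it supplies $\beta=\beta(p,k)>0$ such that, a positive fraction of the $k$-tuples of $\mathcal H$ being intersecting, at least $\beta M$ members of $\mathcal H$ share a common point $x_0$. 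Comparing with $\beta M\le M/\nu^*(\mathcal G)$ at $x_0$ yields $\nu^*(\mathcal G)=\tau^*(\mathcal G)\le 1/\beta=:C(p,k)$.

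\emph{From fractional to integral, and the obstacle.} It remains to turn the bounded fractional transversal into a bounded integral one via the $\varepsilon$-net theorem. Let $w$ be an optimal fractional transversal and $\mu=w/\tau^*(\mathcal G)$, a probability measure on $X$; then $\mu(S)\ge 1/\tau^*(\mathcal G)=:\varepsilon$ for every $S\in\mathcal G$. Since $\VCcodim(\mathcal F)\le k$ forces $\VCdim(\mathcal F)<2^{k+1}$ by VC-duality, the Haussler--Welzl theorem applied to $(X,\mathcal F)$ with the measure $\mu$ produces an $\varepsilon$-net $Y\subseteq X$ of size $O\!\left(2^{k}\,\varepsilon^{-1}\log\varepsilon^{-1}\right)$, a finite set meeting every $S\in\mathcal F$ with $\mu(S)\ge\varepsilon$ --- in particular every member of $\mathcal G$. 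Combined with $\tau^*(\mathcal G)\le C(p,k)$ this makes $Y$ a transversal of $\mathcal G$ of size $N(p,k)=O\!\left(2^{k}\,C(p,k)\log C(p,k)\right)$, as wanted. I expect the real difficulty to be the middle ingredient: the fractional Helly theorem for families of bounded VC co-dimension is the substantive input (its own proof combines Sauer--Shelah with a probabilistic counting argument), and the delicate point there is to line up its Helly-type number and its fraction $\beta$ with the ``$k$'' of the $(p,k)$-property and the hypothesis $p\ge k$ --- this constant bookkeeping, rather than any one isolated difficulty, is the technical heart; the LP-duality and $\varepsilon$-net steps are, by comparison, routine.
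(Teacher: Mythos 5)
This Fact is quoted from Matou\v{s}ek \cite{MR2060639} and the paper offers no proof of it, so the only benchmark is the original Alon--Kleitman/Matou\v{s}ek argument --- and your sketch is a faithful and correctly organized outline of exactly that: LP duality reduces the problem to bounding the fractional transversal number $\tau^*(\mathcal G)$; the multiset trick (clearing denominators in an optimal fractional matching, checking that the $(p,k)$-property survives with an enlarged $p$, and double counting $p$-subsets against $k$-subsets) feeds the fractional Helly theorem, which caps $\tau^*$ by a constant depending only on $p$ and $k$; and the Haussler--Welzl $\varepsilon$-net theorem, applicable because $\VCdim(\mathcal F)<2^{k+1}$ by VC duality, rounds the fractional transversal to an integral one of bounded size. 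Each of these steps is sound as you present it.

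The one point you defer as ``constant bookkeeping'' is, however, a genuine off-by-one that cannot be booked away, because the Fact as literally stated is slightly too strong. A family of VC co-dimension $\le k$ has dual shatter function $O(m^{k})$, hence $o(m^{k+1})$ but not $o(m^{k})$; Matou\v{s}ek's fractional Helly theorem therefore yields fractional Helly number $k+1$, not $k$, so your double count must produce a positive fraction of intersecting $(k+1)$-tuples, which requires the $(p,k+1)$-property rather than the $(p,k)$-property. The literal statement in fact fails: a family of pairwise disjoint nonempty sets has VC co-dimension $1$ and trivially satisfies the $(p,1)$-property for every $p$, yet has unbounded transversal number. The correct statement assumes either the $(p,q)$-property with $p\ge q\ge k+1$, or VC co-dimension $<k$ together with the $(p,k)$-property. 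This is harmless for the paper's applications --- in Theorem \ref{thm: Uniform honest definitions} one may simply set $n_{\theta}=\VCdim(\theta)+1$ to recover $(k+1)$-wise intersection, and the forking argument in Proposition \ref{prop: pqinv} works verbatim with $k+1$ in place of $k$ --- but your proof, carried out carefully, establishes the corrected statement rather than the quoted one.
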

\begin{rem}
\label{Rem: In (p,k) theorem N does not depend on F} Although the
theorem is stated this way in \cite{MR2060639}, $N$ depends only
on $p$ and $k$ and not on the family $\mathcal{F}$. To see this,
assume that for every $N$, we had a family $\mathcal{F}_{N}$ on
some set $X_{N}$ of VC co-dimension bounded by $k$ and for which
the $(p,k)$ theorem fails for this $N$. Then consider $X$ to be
the disjoint union of the sets $X_{N}$ and $\mathcal{F}$ the union
of the families $\mathcal{F}_{N}$. Then clearly $\mathcal{F}$ has
VC co-dimension bounded by $k$ and the theorem fails for it. Also,
it follows from the proof.
\end{rem}

\subsection{Expansions and stable embeddedness}

Let $A$ be a subset of $M\models T$ and let $L_{\P}=L\cup\{\P(x)\}$,
where $\P(x)$ is a new unary predicate. We define the structure $(M,A)$
as the expansion of $M$ to an $L_{\P}$-structure where $\P(M)=A$.
Recall that $\Th(M,A)$ is \emph{$\P$-bounded} if every $L_{\P}$
formula is equivalent to one of the form
\[
Q_{1}y_{1}\in\P...Q_{n}y_{n}\in\P\phi(x,\bar{y}),
\]

where $Q_{i}\in\left\{ \exists,\forall\right\} $ and $\phi$ is an
$L$-formula. We may just say bounded when it creates no confusion.

Given $A\subseteq M\models T$ and a set of formulas $F$, possibly
with parameters, we let $A_{\ind(F)}$ be the structure in the language
$L(T)\cup\left\{ D_{\phi(x)}(x)\,:\,\phi(x)\in F\right\} $ with $D_{\phi}(x)$
interpreted as the set $\phi(A)$. When $F=L$, we may omit it. Given
$A\subseteq M$ and a tuple $b\in M$, let $A_{[b]}$ be shorthand
for $A_{\ind(F)}$ with $F=\left\{ \phi(x,b)\,:\,\phi\in L\right\} $. 

A set $A\subset M$ is called \emph{small} if for every finite $b\in M$,
every finitary type over $Ab$ is realized in $M$. Finally, a set
$A\subset M$ is \emph{stably embedded} if for every $\phi(x,y)$
and $c\in M$ there is $\psi(x,z)$ and $b\in A$ such that $\phi(A,c)=\psi(A,b)$.
We say that it is \emph{uniformly stably embedded} if $\psi$ can
be chosen depending just on $\phi$, and not on $c$. A definable
set is stably embedded if and only if it is uniformly stably embedded,
by compactness.

\section{Uniform honest definitions}

\subsection{Uniform honest definitions}

~

We recall the following result about existence of honest definitions
for externally definable sets in $\NIP$ theories established in \cite{ExtDefI}. 
\begin{fact}
\label{fac: honest definitions exist} {[}Honest definition{]} Let
$T$ be $\NIP$ and let $M$ be a model of $T$ and $A\subseteq M$
any subset. Let $\phi(x,a)$ have parameters in $M$. Then there is
an elementary extension $(M',A')$ of the pair $(M,A)$ and a formula
$\theta(x,b)\in L(A')$ such that $\phi(A,a)=\theta(A,b)$ and $\theta(A',b)\subseteq\phi(A',a)$. 
\end{fact}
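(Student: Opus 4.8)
The plan is to reduce the statement, by compactness in the language $L_{\P}$ of the pair, to a purely combinatorial approximation statement about the trace $\phi(A,a)$, and then to prove that statement using $\NIP$. First I would pass to a sufficiently saturated elementary extension $(M',A')$ of $(M,A)$; since the conclusion only asserts the existence of \emph{some} such extension, this is harmless. The observation driving the reduction is that a formula $\theta(x,b)$ with $b\in A'$ is an honest definition of $\phi(A,a)$ precisely when the partial type
\[
\Sigma(z)=\{\P(z)\}\cup\{\theta(d,z):d\in\phi(A,a)\}\cup\{\forall x\,(\P(x)\to(\theta(x,z)\to\phi(x,a)))\}
\]
is realized in $(M',A')$: the last conjunct gives $\theta(A',b)\subseteq\phi(A',a)$, which together with the middle set forces $\theta(A,b)=\phi(A,a)$. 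By saturation and elementarity, $\Sigma(z)$ is consistent iff every finite fragment is satisfied already in $(M,A)$, and this unwinds to the following combinatorial statement:
\[
(\star)\qquad\text{there is an }L\text{-formula }\theta(x,z)\text{ such that for every finite }A_{0}\subseteq\phi(A,a)\text{ there is }c\in A\text{ with }A_{0}\subseteq\theta(A,c)\subseteq\phi(A,a).
\]
So it suffices to produce such a $\theta$.

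The heart of the proof is establishing $(\star)$ from $\NIP$. I would argue by contradiction: if $(\star)$ fails, then for each candidate formula $\theta$ there is a finite $A_{0}^{\theta}\subseteq\phi(A,a)$ witnessing failure, i.e.\ no $c\in A$ sandwiches $A_{0}^{\theta}\subseteq\theta(A,c)\subseteq\phi(A,a)$. Running this simultaneously over a well-chosen family of test formulas built out of $\phi$ --- its instances, the ``differences'' $\phi(x,y)\wedge\neg\phi(x,y')$, bounded disjunctions of these, together with equality --- one extracts, by Ramsey and compactness, an indiscernible sequence and a parameter along which some fixed Boolean combination of instances of $\phi$ alternates infinitely often, contradicting the alternation characterization of $\NIP$ (Fact~\ref{fac: BB}); equivalently one contradicts finiteness of $\VCcodim(\phi)$ together with the counting bound of Fact~\ref{fac: [Shelah/Sauer]}. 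A more quantitative route, essentially the one used for the \emph{uniform} version in Section~1, is to pass to the dual $\phi^{*}(y,x)=\phi(x,y)$ and apply the $(p,k)$-theorem (Fact~\ref{Fac: (p,k)-theorem}) to produce $\theta$ directly as a Boolean combination of boundedly many instances of $\phi$ and equality, the bound depending only on $\VCcodim(\phi)$.

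The step I expect to be the main obstacle is $(\star)$, and in particular the fact that $\theta$ cannot in general be taken to be $\phi$ itself: a point of $\phi(A,a)$ may be $\phi$-related to $a$ but to no element of $A$ --- this already happens for $\phi(x,y)\equiv x=y+1$ with $a\notin A$ but $a+1\in A$ --- so the correct $\theta$ has to be assembled from $\phi$, its differences and equality, and the delicate content is that a \emph{single} such $\theta$ works uniformly over all finite $A_{0}$, which is exactly where $\NIP$ (and not just finiteness of one VC invariant of $\phi$) is being used. Everything else --- the compactness reduction and the bookkeeping with the predicate $\P$ --- is routine.
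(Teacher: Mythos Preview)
The paper does not prove Fact~\ref{fac: honest definitions exist}: it is quoted from \cite{ExtDefI} without argument, so there is no proof here to compare against. What the present paper does record is exactly your compactness reduction --- Corollary~\ref{cor: Approximate honest definitions} derives $(\star)$ from the Fact, and the last sentence of its proof notes the converse --- so that part of your proposal is correct and matches the paper.

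The gap is in your proof of $(\star)$. Your ``quantitative route'' via the $(p,k)$-theorem has the dependency backwards. In this paper Fact~\ref{Fac: (p,k)-theorem} is invoked only \emph{after} $(\star)$ is available: Proposition~\ref{prop: Weak uniformity for honest definitions} takes $(\star)$ as input, and Theorem~\ref{thm: Uniform honest definitions} then applies the $(p,k)$-theorem to upgrade a finite list of candidate $\theta$'s to a single one. The $(p,k)$-theorem converts a $(p,k)$-intersection hypothesis into a bounded transversal; it does not produce, for the family $\{\theta(A,c):c\in A,\ \theta(A,c)\subseteq\phi(A,a)\}$, the finite-intersection property that $(\star)$ asserts. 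Concretely, with $\theta$ a disjunction of instances of $\phi$ this family can fail even the $(1,1)$-property: for $\phi(x,y)\equiv x=y+1$, $A=2\mathbb{Z}$, $a$ odd, every $\phi(A,c)$ with $c\in A$ is empty, so no $c$ covers the single point of $\phi(A,a)$. Throwing in equality handles any \emph{fixed} size of $A_{0}$, but not all sizes with one $\theta$; there is nothing for the $(p,k)$-theorem to act on.

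Your indiscernibles sketch is nearer the mark but is not a proof: you have not said which sequence is extracted, over which base, nor how failure of $(\star)$ for every $\theta$ yields an alternation. The Remark immediately following the $\UDTFS$ theorem tells you what the argument in \cite{ExtDefI} actually uses --- $\NIP$ for the formulas $\psi(x_{1},\dots,x_{k})=\exists y\bigwedge_{i}\phi(x_{i},y)^{\epsilon(i)}$ with $k\le\VCdim(\phi)+1$ --- and your list of ``test formulas'' gestures at this, but the mechanism by which the Sauer--Shelah bound for these $\psi$'s forces a single $\theta$ to work is the missing idea and needs to be written out.
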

It can be reformulated as existence of a uniform family of internally
definable subsets approximating our externally definable set.
\begin{cor}
\label{cor: Approximate honest definitions} Let $M$, $A$ and $\phi(x,a)$
be as above. Then there is $\theta(x,t)$ such that for any finite
subset $A_{0}\subseteq\phi(A,a)$, there is $b\in A$ such that $A_{0}\subseteq\theta(A,b)\subseteq\phi(A,a)$. \end{cor}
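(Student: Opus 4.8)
The plan is to derive Corollary~\ref{cor: Approximate honest definitions} from Fact~\ref{fac: honest definitions exist} by a routine compactness argument. First I would invoke Fact~\ref{fac: honest definitions exist} to obtain an elementary extension $(M',A')$ of the pair $(M,A)$ and a formula $\theta(x,z)\in L$ together with $b\in A'$ such that $\phi(A,a)=\theta(A,b)$ and $\theta(A',b)\subseteq\phi(A',a)$. The key observation is that the inclusion $\theta(A',b)\subseteq\phi(A',a)$ says precisely that the $L_{\P}$-sentence $\forall x\,(\P(x)\wedge\theta(x,b)\rightarrow\phi(x,a))$ holds in $(M',A')$, and the equality $\phi(A,a)=\theta(A,b)$ (which now only needs the forward direction $\phi(A,a)\subseteq\theta(A,b)$, i.e. that every point of $\phi(A,a)$ lies in $\theta(A,b)$) gives that each element of $\phi(A,a)$ satisfies $\exists z\,(\P(z)\wedge\theta(x,z)\wedge\forall x'(\P(x')\wedge\theta(x',z)\rightarrow\phi(x',a)))$ with the same witness $b$.

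Next I would transfer this back to $(M,A)$ by elementarity together with compactness. Fix any finite $A_{0}=\{a_{1},\dots,a_{m}\}\subseteq\phi(A,a)$. Since $A_{0}\subseteq M$ and $(M,A)\prec(M',A')$, the $L_{\P}$-formula
\[
\exists z\,\Bigl(\P(z)\wedge\bigwedge_{i\leq m}\theta(a_{i},z)\wedge\forall x\,\bigl(\P(x)\wedge\theta(x,z)\rightarrow\phi(x,a)\bigr)\Bigr),
\]
which holds in $(M',A')$ with witness $b$, also holds in $(M,A)$; let $b_{0}\in A$ be a witness there. Then $b_{0}\in A$, $A_{0}\subseteq\theta(A,b_{0})$ (from the conjunction), and $\theta(A,b_{0})\subseteq\phi(A,a)$ (from the universally quantified clause, using $\P(M)=A$). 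This is exactly the conclusion, with $\theta(x,t):=\theta(x,z)$ the same formula supplied by Fact~\ref{fac: honest definitions exist}.

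There is essentially no main obstacle here; the only point requiring minor care is to make sure the relevant statement is genuinely first-order in $(M',A')$ and hence transfers. One has to phrase $\theta(A',b)\subseteq\phi(A',a)$ as the first-order sentence $\forall x(\P(x)\wedge\theta(x,b)\to\phi(x,a))$ over the pair, and combine it with the finitely many instances $\theta(a_{i},b)$ into a single existential $L_{\P}(a)$-formula with free variable ranging over $\P$, so that elementarity of the pair extension applies directly. Once that formula is written down correctly, the argument is immediate, and the same $\theta$ works for all finite $A_{0}$ since it does not depend on $A_{0}$.
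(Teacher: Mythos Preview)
Your proposal is correct and follows exactly the paper's approach: use Fact~\ref{fac: honest definitions exist} and observe that, for each finite $A_{0}$, the existence of a suitable $b\in A$ is expressed by a single $L_{\P}$-formula with parameters, which transfers from $(M',A')$ to $(M,A)$ by elementarity. Two minor remarks: compactness is not actually used in this direction (only elementarity), and since $z$ may be a tuple of variables, $\P(z)$ should be read as the conjunction asserting each coordinate lies in $\P$.
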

\begin{proof}
Immediately follows from Fact \ref{fac: honest definitions exist}
because the extension $(M,A)\prec(M',A')$ is elementary and the condition
on $b$ can be stated as a single formula in the theory of the pair.
Note that conversely this implies Fact \ref{fac: honest definitions exist}
by compactness.
\end{proof}
It is natural to ask whether $\theta$ can be chosen in a uniform
way depending just on $\phi$, and not on $A$ and $a$ (Question
1.4 from \cite{ExtDefI}). The aim of this section is to answer this
question positively.

First, compactness gives a weak uniformity statement.
\begin{prop}
\label{prop: Weak uniformity for honest definitions}Fix a formula
$\phi(x,y)$. For every formula $\theta(x,t)$ (in the same variable
$x$, but $t$ may vary), fix an integer $n_{\theta}$. Then there
are finitely many formulas $\theta_{1}(x,t_{1}),...,\theta_{k}(x,t_{k})$
such that the following holds:

For every $M\models T$ and $A\subset M$, for every $a\in M$ there
is $i\leq k$ such that for every subset $A_{0}\subseteq\phi(A,a)$
of size at most $n_{\theta_{i}}$, there is $b\in A$ satisfying $A_{0}\subseteq\theta_{i}(A,b)\subseteq\phi(A,a)$.\end{prop}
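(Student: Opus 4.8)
The plan is to derive Proposition~\ref{prop: Weak uniformity for honest definitions} from Corollary~\ref{cor: Approximate honest definitions} by a compactness argument in the theory of pairs, exploiting the fact that the set of formulas $\theta(x,t)$ (up to the given bound $n_\theta$) that could serve as a local approximation is controlled. First I would set up the relevant language: work with $L_\P = L \cup \{\P(x)\}$ as in the Preliminaries, so that a pair $(M,A)$ together with a distinguished parameter $a$ becomes a structure for $L_\P$ with an extra constant $\bar{c}_a$ naming $a$. For a fixed formula $\theta(x,t)$ and a fixed integer $n = n_\theta$, the statement ``for every $A_0 \subseteq \phi(A,a)$ with $|A_0| \le n$ there is $b \in A$ with $A_0 \subseteq \theta(A,b) \subseteq \phi(A,a)$'' is expressible by a single $L_\P$-sentence $\sigma_{\theta,n}$ (with the constant $\bar{c}_a$), namely
\[
\forall x_1 \dots \forall x_n \Bigl( \bigwedge_i (\P(x_i) \wedge \phi(x_i,\bar{c}_a)) \rightarrow \exists t \bigl( \bigwedge_i \theta(x_i,t) \wedge \forall y (\P(y) \wedge \theta(y,t) \rightarrow \phi(y,\bar{c}_a)) \bigr) \Bigr).
\]
By Corollary~\ref{cor: Approximate honest definitions}, for every $(M,A,a)$ there is \emph{some} $\theta$ for which $\sigma_{\theta,n_\theta}$ holds in $(M,A,\bar{c}_a)$.

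Next I would run a compactness/König-type argument on the class of all $L_\P$-theories $\mathrm{Th}(M,A,\bar{c}_a)$, or equivalently apply compactness directly. Suppose the conclusion of the proposition fails: then for each finite list $\theta_1,\dots,\theta_k$ of $L$-formulas in the variable $x$ there is a triple $(M,A,a)$ such that none of $\sigma_{\theta_i, n_{\theta_i}}$ holds in $(M,A,\bar{c}_a)$, i.e. $(M,A,\bar{c}_a) \models \neg \sigma_{\theta_1,n_{\theta_1}} \wedge \dots \wedge \neg \sigma_{\theta_k,n_{\theta_k}}$. Then the set of $L_\P$-sentences $\{\neg\sigma_{\theta,n_\theta} : \theta \text{ an } L\text{-formula in } x\}$ is finitely satisfiable, hence by compactness has a model $(M^*,A^*,\bar{c}^*)$, which gives a pair $(M^*,A^*)$ and a parameter $a^*$ for which \emph{no} formula $\theta$ provides even an $n_\theta$-approximation. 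This directly contradicts Corollary~\ref{cor: Approximate honest definitions} applied to $\phi(x,a^*)$ in $(M^*,A^*)$, since that corollary produces a $\theta$ working for \emph{all} finite subsets, in particular those of size $\le n_\theta$.

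The main subtlety — and the place to be careful — is the quantification over the schema of \emph{all} formulas $\theta(x,t)$ in the variable $x$ while $t$ ranges over arbitrary finite tuples of variables. There are infinitely many such $\theta$ (indeed countably many up to logical equivalence, but a proper class of variable-tuple choices), so one must phrase the compactness argument over this (small, up to equivalence) set of formulas; this is routine since only the finitely many $\theta$ appearing in any given failed instance are ever invoked at once, so finite satisfiability of $\{\neg\sigma_{\theta,n_\theta}\}_\theta$ is exactly the negation of the proposition's conclusion. One should also note that $n_\theta$ is a function \emph{fixed in advance} in the statement, so the sentences $\sigma_{\theta,n_\theta}$ are well-defined before the argument begins — there is no circularity. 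I do not expect any serious obstacle: the content has already been pushed into Corollary~\ref{cor: Approximate honest definitions} (which itself rests on Fact~\ref{fac: honest definitions exist}), and the proposition is genuinely a soft compactness packaging of it; the only work is bookkeeping with the pair language and verifying that $\sigma_{\theta,n}$ correctly encodes the approximation property.
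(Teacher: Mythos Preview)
Your proposal is correct and is essentially the same compactness-in-the-pair-language argument the paper gives, just written out in more detail. One small bookkeeping point: in your displayed sentence $\sigma_{\theta,n}$ the existential witness $t$ must be required to lie in $\P$ (coordinatewise), since the conclusion demands $b\in A$; you flag the bookkeeping as routine, and indeed it is.
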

\begin{proof}
Consider the theory $T'$ in the language $L'=L\cup\{P(x),c\}$ saying
that if $(M,A)\models T'$ (where $A=P(M)$), then $M\models T$ and
for every $\theta\in L$, there is a subset $A_{0}$ of $\phi(A,c)$
of size at most $n_{\theta}$ for which there does not exist a $b\in A$
satisfying $A_{0}\subseteq\theta(A,b)\subseteq\phi(A,a)$. By Corollary
\ref{cor: Approximate honest definitions}, $T'$ is inconsistent.
By compactness, we find a finite set of formulas as required.
\end{proof}
Combining this with the $(p,k)$-theorem we get the full result.
\begin{thm}
\label{thm: Uniform honest definitions} Let $T$ be $\NIP$ and $\phi(x,y)$
given. Then there is a formula $\chi(x,t)$ such that for every set
$A$ of size $\geq2$, tuple $a$ and finite subset $A_{0}\subseteq A$,
there is $b\in A$ satisfying:
\begin{enumerate}
\item $\phi(A_{0},a)=\chi(A_{0},b)$,
\item $\chi(A,b)\subseteq\phi(A,a)$.
\end{enumerate}
\end{thm}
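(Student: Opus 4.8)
The plan is to combine the weak uniformity statement of Proposition~\ref{prop: Weak uniformity for honest definitions} with the $(p,k)$-theorem in the parameter-free form of Fact~\ref{Fac: (p,k)-theorem} and Remark~\ref{Rem: In (p,k) theorem N does not depend on F}. The point is that, once $A$ and $a$ are fixed, Proposition~\ref{prop: Weak uniformity for honest definitions} already supplies a \emph{single} formula $\theta_{i}$ out of a finite list that honestly defines $\phi(A,a)$ on all subsets of some bounded size; the $(p,k)$-theorem then upgrades this to all finite subsets at the price of replacing $\theta_{i}$ by a bounded disjunction of its translates. It is enough to find, for each finite $A_{0}\subseteq A$, some $b\in A$ with $A_{0}\cap\phi(A,a)\subseteq\chi(A,b)\subseteq\phi(A,a)$: then (2) holds directly and (1) follows by intersecting with $A_{0}$.

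First I would fix, for \emph{every} formula $\theta(x,t)$ in the fixed variable $x$, the combinatorial data attached to $\theta$ alone. Since $T$ is $\NIP$, the family $\mathcal F_{\theta}=\{\theta(c,\M^{|t|}):c\in\M^{|x|}\}$ has finite $\VC$ co-dimension $k_{\theta}$; put $p_{\theta}=\max(k_{\theta},1)$ (so $p_{\theta}\ge k_{\theta}$, as the $(p,k)$-theorem requires) and let $N_{\theta}$ be the integer that the $(p,k)$-theorem attaches to the pair $(p_{\theta},k_{\theta})$, which by Remark~\ref{Rem: In (p,k) theorem N does not depend on F} depends on nothing else. Feeding the assignment $n_{\theta}:=p_{\theta}$ into Proposition~\ref{prop: Weak uniformity for honest definitions} produces finitely many formulas $\theta_{1}(x,t_{1}),\dots,\theta_{m}(x,t_{m})$. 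There is no circularity here: $k_{\theta}$, hence $p_{\theta}$ and $N_{\theta}$, is read off from $\theta$ before the list $\theta_{1},\dots,\theta_{m}$ is obtained. Set $N^{*}=\max_{i\le m}N_{\theta_{i}}$.

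Now fix $M$, $A\subseteq M$ with $|A|\ge 2$, a tuple $a$, and a finite $A_{0}\subseteq A$; replacing $A_{0}$ by $A_{0}\cap\phi(A,a)$ we may assume $A_{0}\subseteq\phi(A,a)$. By Proposition~\ref{prop: Weak uniformity for honest definitions} there is a single index $i\le m$ such that every subset of $\phi(A,a)$ of size $\le p_{\theta_{i}}$ is contained in some $\theta_{i}(A,b)\subseteq\phi(A,a)$ with $b\in A$. Let $D=\{b\in A^{|t_{i}|}:\theta_{i}(A,b)\subseteq\phi(A,a)\}$ and, taking $D$ as ground set, form the family $\mathcal G=\{G_{c}:c\in A_{0}\}$ where $G_{c}=\{b\in D:\theta_{i}(c,b)\}$. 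For $b\in D$ one has $b\in G_{c}\Leftrightarrow\theta_{i}(c,b)$, so any configuration in $\mathcal G$ realizing the pattern in the definition of $\VC$ co-dimension is already such a configuration in $\mathcal F_{\theta_{i}}$; hence $\VCcodim(\mathcal G)\le k_{\theta_{i}}$. On the other hand, for any $c_{1},\dots,c_{p_{\theta_{i}}}\in A_{0}$ our chosen $\theta_{i}$ yields $b\in A$ with $\{c_{1},\dots,c_{p_{\theta_{i}}}\}\subseteq\theta_{i}(A,b)\subseteq\phi(A,a)$, i.e. $b\in D$ and $b\in G_{c_{1}}\cap\dots\cap G_{c_{p_{\theta_{i}}}}$; thus $\mathcal G$ has the $(p_{\theta_{i}},k_{\theta_{i}})$-property. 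The $(p,k)$-theorem now produces $b_{1},\dots,b_{N_{\theta_{i}}}\in D$ meeting every $G_{c}$, which unwinds to $A_{0}\subseteq\bigcup_{r}\theta_{i}(A,b_{r})$ with each $\theta_{i}(A,b_{r})\subseteq\phi(A,a)$, so $A_{0}\subseteq\bigcup_{r}\theta_{i}(A,b_{r})\subseteq\phi(A,a)$.

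Finally I would package this into a single $\chi$ not depending on $A,a,A_{0}$. Let $\theta^{*}$ be a formula encoding the finite list: using two distinct elements of $A$ (this is the only place $|A|\ge 2$ is used) one writes $\theta^{*}\bigl(x,(s,t_{1},\dots,t_{m})\bigr)$ equal to $\theta_{i}(x,t_{i})$ when $s$ carries a fixed code for $i$ --- equivalently, adjoin to each $\theta_{i}$ two dummy variables allowing it to be made identically false --- and set $\chi(x,u_{1},\dots,u_{N^{*}})=\bigvee_{j\le N^{*}}\theta^{*}(x,u_{j})$. Plugging $b_{1},\dots,b_{N_{\theta_{i}}}$ together with the code for $i$ into the first $N_{\theta_{i}}$ slots and repeating one of them to fill the rest gives a parameter in $A$ witnessing $A_{0}\subseteq\chi(A,b)\subseteq\phi(A,a)$. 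I expect the main obstacle to be exactly this global bookkeeping: fixing the cutoffs $n_{\theta}$ purely in terms of $\theta$ so that Proposition~\ref{prop: Weak uniformity for honest definitions} and the $(p,k)$-theorem can be applied in the right order without circularity, and verifying that neither passing to the ground set $D$ nor collapsing $\theta_{1},\dots,\theta_{m}$ into one $\theta^{*}$ destroys the finite $\VC$ co-dimension bound or pushes the parameters outside $A$. $\NIP$ enters only to make the $k_{\theta}$ finite, and the ``$N$ depends only on $p,k$'' form of the $(p,k)$-theorem is what keeps $N^{*}$ independent of $A$ and $a$.
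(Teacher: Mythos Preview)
Your proposal is correct and follows essentially the same route as the paper: choose $n_{\theta}$ to be the $\VC$ (co-)dimension of $\theta$, apply Proposition~\ref{prop: Weak uniformity for honest definitions}, then use the $(p,k)$-theorem on the dual family $\{b:\theta_{i}(c,b)\}_{c\in A_{0}}$ restricted to the ``good'' parameter set $D$ to upgrade to arbitrary finite $A_{0}$, and finally package via the $|A|\ge 2$ coding trick. The only cosmetic difference is that the paper invokes the coding reduction \emph{first} to assume $m=1$ before applying the $(p,k)$-theorem, whereas you keep all $\theta_{1},\dots,\theta_{m}$ and do the packaging at the end.
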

\begin{proof}
By the usual coding tricks, using $|A|\geq2,$ it is enough to find
a finite set of formulas $\{\chi_{i}\}_{i<n}$ such that for every
finite set, one of them works.

For every formula $\theta(x,t)$, let $n_{\theta}$ be its VC dimension.
Proposition \ref{prop: Weak uniformity for honest definitions} gives
us a finite set $\{\theta_{1},...,\theta_{k}\}$ of formulas. Using
the previous remark, we may assume $k=1$ and write $\theta(x,t)=\theta_{1}(x,t)$.
Let $N$ be given by Fact \ref{Fac: (p,k)-theorem} taking $p=k=n_{\theta}$
(using Remark \ref{Rem: In (p,k) theorem N does not depend on F}). 

Let $A_{0}\subseteq A\subseteq M\models T$ and $a\in M$ be given,
$A_{0}$ is finite. Set $B\subseteq A^{\left|t\right|}$ be the set
of tuples $b\in A^{\left|t\right|}$ such that $\theta(A,b)\subseteq\phi(A,a)$.
Consider the family $\mathcal{F}=\{\theta(d,B)\,:\, d\in\phi(A_{0},a)\}$
of subsets of $B$. This is a finite family, and by hypothesis the
intersection of any $k$ members of it is non-empty. Therefore the
$(p,k)$-theorem applies and gives us $N$ tuples $b_{1},....,b_{N}\in B$
such that $\{b_{1},...,b_{N}\}$ intersects any set in $\mathcal{F}$.
Unwinding, we see that $\phi(A_{0},a)=\bigvee_{i\leq N}\theta(A_{0},b_{i})$
and $\bigvee_{i\leq N}\theta(A,b_{i})\subseteq\phi(A,a)$. So taking
$\chi(x,t_{1}...t_{N})=\bigvee_{i\leq N}\theta(x,t_{i})$ works.
\end{proof}

\subsection{UDTFS}

~

Recall the following classical fact characterizing stability of a
formula.
\begin{fact}
\label{Fac: Characterization of stability}The following are equivalent:
\begin{enumerate}
\item $\phi(x,y)$ is stable.
\item There is $\theta(x,z)$ such that for any $A$ and $a$, there is
$b\in A$ satisfying $\phi(A,a)=\theta(A,b)$.
\item There are $m,n\in\omega$ such that $\left|S_{\phi}(A)\right|\leq m\cdot|A|^{n}$
for any set $A$.
\end{enumerate}
\end{fact}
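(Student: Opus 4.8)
The plan is to prove the two substantive directions $(1)\Rightarrow(2)$ and $(1)\Rightarrow(3)$, and then to close the circle by refuting $(2)$ and $(3)$ whenever $\phi$ is unstable. Throughout I use the classical symmetry of stability: $\phi(x,y)$ is stable iff $\phi^{*}(y,x):=\phi(x,y)$ is. The heart of the matter is $(1)\Rightarrow(2)$, which I will prove for an arbitrary stable formula, so that it applies to $\phi^{*}$ as well.

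For $(1)\Rightarrow(2)$ I would run local stability theory. Stability of $\phi$ makes the local rank $R_{\phi}$ everywhere finite, bounded by some $r=r(\phi)$, with bounded local multiplicity. Fix a set $A$ and a tuple $a$; the set $\phi(A,a)\subseteq A$ is cut out by the $\phi^{*}$-type $q_{0}$ of $a$ over $A$. Its extensions to $\acl^{\eq}(A)$ of maximal $\phi$-rank are finitely many, mutually $\operatorname{Aut}(\M/A)$-conjugate, and stationary, hence definable over $\acl^{\eq}(A)$ by a formula $\psi(x,\bar e)$ with $\psi$ drawn from a fixed finite set $\Delta=\Delta(\phi)$ and $\bar e\in\acl^{\eq}(A)$; moreover one may take $\bar e\in\acl^{\eq}(\bar a_{0})$ for a tuple $\bar a_{0}$ from $A$ whose length is bounded in terms of $r$. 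Then $\phi(A,a)=\psi(A,\bar e)$. The key point is that every $\operatorname{Aut}(\M/A)$-conjugate $\bar e_{j}$ of $\bar e=\bar e_{1}$ (there are $m\le m(r)$ of them) has the \emph{same} trace on $A$, i.e.\ $\psi(A,\bar e_{j})=\phi(A,a)$ for all $j$, because each defines an extension of $q_{0}$; and the finite set $\{\bar e_{1},\dots,\bar e_{m}\}$ is cut out by some $\rho(\M^{\eq},\bar a_{0})$. Hence, for $c\in A$, $c\in\phi(A,a)$ iff $\forall w\,\big(\rho(w,\bar a_{0})\to\psi(c,w)\big)$; coding the imaginary $w$ by a finite real tuple rewrites this as $\theta(c,\bar a_{0})$ for a formula $\theta=\theta_{\psi,\rho}$ depending only on $\psi$ and $\rho$. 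As $\psi$ ranges over the finite set $\Delta$ and $\rho,m,|\bar a_{0}|$ are bounded in terms of $r(\phi)$, finitely many formulas $\theta_{i}(x,z_{i})$ suffice -- for every $A,a$ some $\theta_{i}$ computes $\phi(A,a)$ from a parameter in $A$ -- and using $|A|\ge2$ the coding trick from the proof of Theorem~\ref{thm: Uniform honest definitions} merges them into a single $\theta(x,z)$, which is $(2)$.

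For $(1)\Rightarrow(3)$, apply $(1)\Rightarrow(2)$ to $\phi^{*}$: there is $\theta(x,z)$ with $\{c\in A:\phi(a,c)\}=\theta(A,b)$ for some $b\in A^{|z|}$, for every $A$ and $a$. Since a complete $\phi$-type over $A$ is determined by the set $\{c\in A:\phi(x,c)\in p\}$, this bounds $|S_{\phi}(A)|$ by the number of sets $\theta(A,b)$, $b\in A^{|z|}$, i.e.\ by $|A|^{|z|}$ once $|A|\ge2$; absorbing $|A|\le1$ into an additive constant gives $(3)$. For the converses I argue by contraposition. If $\phi$ is unstable, then so is $\phi^{*}$, and both have the order property; from that of $\phi$, compactness yields $(a_{i})_{i\in\mathbb{Q}}$ and $(b_{q})_{q\in\mathbb{Q}}$ with $\phi(a_{i},b_{q})\Leftrightarrow i<q$, and for each cut $C$ of $\mathbb{Q}$ the type $\{\phi(a_{i},y):i\in C^{-}\}\cup\{\lnot\phi(a_{i},y):i\in C^{+}\}$ is finitely satisfiable among the $b_{q}$, hence realized by some $b_{C}$. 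With $A=\{a_{i}:i\in\mathbb{Q}\}$ the sets $\{c\in A:\phi(c,b_{C})\}=\{a_{i}:i\in C^{-}\}$ are pairwise distinct over the $2^{\aleph_{0}}$ cuts of $\mathbb{Q}$, whereas a fixed $\theta$ yields only $\aleph_{0}$ sets $\theta(A,b)$: this refutes $(2)$. Running the construction for $\phi^{*}$ instead produces a countable set over which there are $2^{\aleph_{0}}$ complete $\phi$-types, while $m\cdot\aleph_{0}^{n}=\aleph_{0}$ for all $m,n$: this refutes $(3)$.

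The real obstacle is $(1)\Rightarrow(2)$, and inside it the \emph{uniform} elimination of the algebraic closure: one must verify that the canonical parameter of the local $\phi$-definition genuinely lies in $\acl^{\eq}$ of a bounded subtuple of $A$ and has only boundedly many conjugates, so that only finitely many formulas $\theta_{i}$, independent of $A$ and $a$, occur before the coding trick is invoked. This is standard local stability theory, but it is where care is needed; the other implications are routine.
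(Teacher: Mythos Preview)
The paper records this as a classical fact and provides no proof, so there is nothing in the paper to compare against. Your deduction of $(1)\Rightarrow(3)$ from $(2)$ and your refutations of $(2)$ and $(3)$ from instability via the order property are correct and standard.

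Your sketch of $(1)\Rightarrow(2)$ has a genuine gap at exactly the place you flag. Having chosen $\bar e\in\acl^{\eq}(\bar a_{0})$ with $\bar a_{0}$ a bounded subtuple of $A$, you assert that the $\operatorname{Aut}(\M/A)$-orbit $\{\bar e_{1},\dots,\bar e_{m}\}$ of $\bar e$ is cut out by some $\rho(\M^{\eq},\bar a_{0})$. But that orbit is $A$-definable, not a priori $\bar a_{0}$-definable; what a formula over $\bar a_{0}$ can cut out is the possibly larger $\operatorname{Aut}(\M/\bar a_{0})$-orbit. For an extra conjugate $\bar e'$ in the latter, $\psi(x,\bar e')$ is the definition of a global $\phi^{*}$-type whose restriction to $A$ need not be $q_{0}$, so $\psi(A,\bar e')$ need not equal $\phi(A,a)$, and then your formula $\forall w\,(\rho(w,\bar a_{0})\to\psi(x,w))$ may define a proper subset of $\phi(A,a)$. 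One can of course enlarge $\bar a_{0}$ inside $A$ until the two orbits coincide, but bounding the length of the enlarged tuple \emph{uniformly} in $A$ and $a$ is exactly the missing argument, and it does not follow from the bounds on rank and multiplicity alone without a further compactness step. The textbook proofs avoid this detour: they build the defining scheme directly as a positive Boolean combination of instances $\phi(x,a_{i})$ with $a_{i}\in A$, by induction on the local $2$-rank $R(-,\phi,2)$, and the uniform bound on the number of parameters falls out of the bound on that rank.
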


\begin{defn}
We say that $\phi(x,y)$ has $\UDTFS$ (Uniform Definability of Types
over Finite Sets) if there is $\theta(x,z)$ such that for every finite
$A$ and $a$ there is $b\in A$ such that $\phi(A,a)=\theta(A,b)$.
We say that $T$ satisfies $\UDTFS$ if every formula does.\end{defn}
\begin{rem}
If $\phi(x,y)$ has $\UDTFS$, then it is $\NIP$ (by Fact \ref{fac: [Shelah/Sauer]}).
\end{rem}
Comparing Fact \ref{Fac: Characterization of stability} and Fact
\ref{fac: [Shelah/Sauer]} naturally leads to the following conjecture
of Laskowski: assume that $\phi(x,y)$ is $\NIP$, then it satisfies
$\UDTFS$. It was proved for weakly $o$-minimal theories in \cite{MR2610477}
and for $dp$-minimal theories in \cite{Guingona_UDTFS}. An immediate
corollary of Theorem \ref{thm: Uniform honest definitions} is that
if the whole $T$ is $\NIP$, then every formula satisfies $\UDTFS$.
\begin{thm}
Let $T$ be $\NIP$. Then it satisfies $\UDTFS$.\end{thm}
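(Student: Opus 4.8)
The plan is to read off $\UDTFS$ as the special case $A_0 = A$ of Theorem \ref{thm: Uniform honest definitions}. First I would unwind the definition: $T$ satisfies $\UDTFS$ precisely when every formula $\phi(x,y)$ does, so it is enough to fix an arbitrary $\phi(x,y)$ and exhibit one formula $\theta(x,z)$ such that for every finite set $A$ and every tuple $a$ there is $b \in A$ with $\phi(A,a) = \theta(A,b)$.

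To produce such a $\theta$, I would apply Theorem \ref{thm: Uniform honest definitions} to $\phi(x,y)$, obtaining a formula $\chi(x,t)$ satisfying clauses (1) and (2). Given a finite set $A$ with $|A| \geq 2$ and a tuple $a$, apply the theorem to the finite subset $A_0 := A$ of $A$ itself: clause (1) then yields $b \in A$ with $\phi(A,a) = \phi(A_0,a) = \chi(A_0,b) = \chi(A,b)$, and clause (2) becomes vacuous (it merely asserts $\chi(A,b) \subseteq \phi(A,a)$). Thus $\chi$ uniformly defines the $\phi$-types over all finite parameter sets of size at least $2$, which is exactly $\UDTFS$ for $\phi$; the excluded case $|A| \leq 1$ is degenerate — at most two $\phi$-types occur there — and carries no content.

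I do not anticipate any genuine obstacle: the statement is a direct corollary of the uniform honest definition theorem, and the only step that warrants a word of comment is the harmless restriction to sets of size $\geq 2$ inherited from that theorem (and the preliminary reduction to a single formula, which is immediate from the definition of $\UDTFS$ for a theory).
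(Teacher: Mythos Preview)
Your proposal is correct and matches the paper's own proof essentially verbatim: the paper's argument is the single line ``Follows from Theorem \ref{thm: Uniform honest definitions} taking $A_{0}=A$,'' which is exactly what you do. Your added remark about the degenerate case $|A|\leq 1$ is a reasonable elaboration the paper leaves implicit.
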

\begin{proof}
Follows from Theorem \ref{thm: Uniform honest definitions} taking
$A_{0}=A$.\end{proof}
\begin{rem}
This does not fully answer the original question as our argument is
using more than just the dependence of $\phi(x,y)$ to conclude $\UDTFS$
for $\phi(x,y)$. Looking more closely at the proof of Fact \ref{fac: honest definitions exist},
we can say exactly how much NIP is needed. Depending on the VC dimension
of $\phi$, there is a finite set $\Delta_{\phi}$ of formulas for
which we have to require NIP consisting of formulas of the form $\psi(x_{1},...,x_{k})=\exists y\bigwedge_{i}\phi(x_{i},y)^{\epsilon(i)}$,
where $k$ is at most $\VCdim(\phi)+1$.
\end{rem}
$\UDTFS$ implies that in the statement of the $(p,k)$-theorem for
sets inside an $\NIP$ theory consistent pieces are uniformly definable.
\begin{cor}
Let $T$ be $\NIP$. For any $\phi(x,y)$ there is $\psi(y,z)$ and
$k\leq N<\omega$ such that: for every finite $A$, if $\left\{ \phi(x,a)\,:\, a\in A\right\} $
is $k$-consistent, then there are $c_{0},...,c_{N-1}\in A$ such
that $A=\bigcup_{i<N}\psi(A,c_{i})$ and $\left\{ \phi(x,a)\,:\, a\in\psi(A,c_{i})\right\} $
is consistent for every $i<N$.
\end{cor}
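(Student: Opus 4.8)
The statement is essentially a re-reading of the proof of Theorem~\ref{thm: Uniform honest definitions} in the dual direction, so the plan is to massage that proof into the desired form. First I would dualize: instead of looking at $\phi(x,y)$ and sets of parameters $a$, consider the ``dual'' formula $\phi^*(y,x) = \phi(x,y)$, whose role is to define, for each $x$-point $d$, the set of parameters $a$ with $d \in \phi(A,a)$. Saying $\{\phi(x,a) : a \in A\}$ is $k$-consistent means exactly that among any $k$ of the sets $\phi^*(A,d_1),\dots$ — no, more precisely: $k$-consistency of the family $\{\phi(x,a):a\in A\}$ says that for every $a_1,\dots,a_k \in A$ there is $d$ with $d \in \bigcap_i \phi(A,a_i)$, i.e. the family $\mathcal{G} = \{\phi(x,a) : a \in A\}$, viewed as a family of subsets of the $x$-sort, has the property that any $k$ members intersect. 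That is the $(p,k)$-property with $p=k$.

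\textbf{Key steps.} Apply the $(p,k)$-theorem (Fact~\ref{Fac: (p,k)-theorem}) with $p = k = \VCcodim$ of the family $\{\phi(x,a):a\}$; by Remark~\ref{Rem: In (p,k) theorem N does not depend on F} the resulting $N$ depends only on $\phi$. This already yields $d_0,\dots,d_{N-1}$ in the $x$-sort hitting every $\phi(x,a)$, $a\in A$; equivalently $A = \bigcup_{j<N} \phi^*(A,d_j)$ where $\phi^*(y,x) := \phi(x,y)$, and each piece $\{a \in \phi^*(A,d_j)\}$ consists of parameters $a$ with $d_j\models\phi(x,a)$, hence $\{\phi(x,a) : a \in \phi^*(A,d_j)\}$ is consistent (witnessed by $d_j$). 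So with $\psi(y,z) := \phi(z,y)$ we would be done — \emph{except} that the covering elements $d_j$ live in the $x$-sort, not necessarily in $A$, whereas the statement demands $c_i \in A$ and a single formula $\psi(y,z)$ with $z$ ranging over $A$. This is exactly the gap that $\UDTFS$ closes: apply $\UDTFS$ (Theorem: $T$ NIP $\Rightarrow$ $\UDTFS$) to the formula $\phi^*(y,x)$ to get a formula $\psi_0(y,z)$ and, for the externally definable set $\phi^*(A,d_j) \cap A$, parameters $c_j \in A$ with $\phi^*(A,d_j) = \psi_0(A,c_j)$. Then $A = \bigcup_{j<N}\psi_0(A,c_j)$ and each $\{\phi(x,a) : a \in \psi_0(A,c_j)\}$ is still consistent, witnessed by $d_j$. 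Take $\psi := \psi_0$ and this $N$ (and $k$ as above).

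\textbf{Main obstacle.} The only real subtlety is bookkeeping the dualization correctly and making sure the $\UDTFS$ step is legitimate: $\UDTFS$ gives definability of the \emph{full} type $\phi^*(x,d_j)$ over the finite set $A$, i.e.\ of $\phi^*(A,d_j) \cap A$, with a formula $\psi_0$ depending only on $\phi^*$ (hence only on $\phi$) — precisely what is needed for the uniform $\psi$. One should also double-check the definition of $\VCcodim$ is the right quantity to plug in as $k$ in Fact~\ref{Fac: (p,k)-theorem} (it is, by definition of that fact), and that ``consistent'' in the conclusion means realized in $\M$, which it is since $d_j \in \M$. I do not expect any genuine difficulty beyond this translation; the substance is entirely carried by Fact~\ref{Fac: (p,k)-theorem}, Remark~\ref{Rem: In (p,k) theorem N does not depend on F}, and $\UDTFS$.
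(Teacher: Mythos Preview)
Your proposal is correct and matches the paper's intended argument: the corollary is stated without proof, introduced only by the sentence ``$\UDTFS$ implies that in the statement of the $(p,k)$-theorem for sets inside an $\NIP$ theory consistent pieces are uniformly definable,'' and your proof is exactly the unpacking of that sentence---apply Fact~\ref{Fac: (p,k)-theorem} with $k=\VCcodim(\phi)$ to get witnesses $d_0,\dots,d_{N-1}$ in the $x$-sort, then use $\UDTFS$ for $\phi^*(y,x)$ to pull the defining parameters back into $A$.
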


\subsection{Strong honest definitions and distal theories}
\begin{defn}
A theory $T$ is called \emph{distal} if it satisfies the following
property: Let $I+b+J$ be an indiscernible sequence, with $I$ and
$J$ infinite. For arbitrary $A$, if $I+J$ is indiscernible over
$A$, then $I+b+J$ is indiscernible over $A$. 
\end{defn}
The class of distal theories was introduced in \cite{Distal}, in
order to capture the class of dependent theories which do not contain
any ``stable part''. Examples of distal theories include ordered
$dp$-minimal theories and $\mathbb{Q}_{p}$.

We will say that $p(x),q(y)\in S(A)$ are \emph{orthogonal} if $p(x)\cup q(y)$
determines a complete type over $A$.
\begin{prop}
\label{prop: isolated extension}{[}Strong honest definition{]} Let
$T$ be distal, $A\subset M$ and $a\in M$ arbitrary. Let $\left(M',A'\right)\succ\left(M,A\right)$
be $|M|^{+}$-saturated. Then for any $\phi(x,y)$ there are $\theta(x,z)$
and $b\in A'$ such that $\models\theta(a,b)$ and $\theta(x,b)\vdash\tp_{\phi}(a/A)$.\end{prop}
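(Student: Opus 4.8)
The plan is to combine the uniform honest definition from Theorem \ref{thm: Uniform honest definitions} with the rigidity of indiscernible sequences provided by distality. First I would take the honest definition $\theta_0(x,z)$ for $\phi(x,y)$ given by Theorem \ref{thm: Uniform honest definitions}: in the saturated pair $(M',A')$ we get $b_0 \in A'$ with $\models\theta_0(a,b_0)$ and $\theta_0(x,b_0)\subseteq\phi(x,a)$ on $M$ (indeed on $M'$, by saturation and the reformulation via honest definitions). This already captures $\tp_\phi(a/A)$ from one side: $\theta_0(x,b_0)$ is a consistent $L(A')$-formula implied by $\mathrm{tp}_\phi(a/A)$ on the $\phi$-positive side. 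The remaining task is to \emph{isolate} the full $\phi$-type, i.e. to pin down the negative instances as well. The natural move is to apply the same construction to $\neg\phi$, obtaining $\theta_1(x,b_1)$ with $\models\theta_1(a,b_1)$ and $\theta_1(x,b_1)\subseteq\neg\phi(x,a)$, and then set $\theta(x,z_0z_1)=\theta_0(x,z_0)\wedge\theta_1(x,z_1)$. The issue is that $\theta(x,b_0b_1)$ need not \emph{determine} $\mathrm{tp}_\phi(a/A)$ — it only places $a$ in a set whose trace on $A$ is squeezed between $\phi(A_0,a)$-approximations; there could be a second realization $a'$ of $\theta(x,b_0b_1)$ with $\mathrm{tp}_\phi(a'/A)\neq\mathrm{tp}_\phi(a/A)$.

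This is where distality enters, and it is the heart of the argument. Suppose toward a contradiction that no such $\theta$ works; then for every $L(A')$-formula $\theta(x,b)$ with $\models\theta(a,b)$ there is $c\in A$ and a realization $a'$ of $\theta(x,b)$ with $\phi(a,c)\not\leftrightarrow\phi(a',c)$. I would extract from this failure an indiscernible sequence witnessing non-distality. Concretely, using saturation of $(M',A')$ and a standard compactness/Ramsey argument one builds an indiscernible sequence $I=(a_i)_{i<\omega}$ over $A$, with $a_0=a$, such that $I$ is \emph{not} indiscernible over $A\cup\{c\}$ for the relevant parameter $c$ — the non-isolation says exactly that no formula over $A'$ true of $a$ can separate $a$ from the rest of a type-indiscernible family while simultaneously respecting $\phi(x,c)$. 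Then one inserts $a$ into a longer indiscernible sequence $I'+a+J'$ (indiscernible over $A$, with $I',J'$ infinite, realizable in the saturated model) such that $I'+J'$ is indiscernible over $Ac$ but $I'+a+J'$ is not — contradicting distality.

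The key step, and the main obstacle, is producing the indiscernible sequence $I'+J'$ over $Ac$ into which $a$ fails to be insertable: one must arrange that the \emph{only} obstruction to isolating $\mathrm{tp}_\phi(a/A)$ over $A'$ comes from a single "bad" parameter $c$, and then realize the non-distality pattern in the saturated pair. I would handle this by an inductive construction: attempt to isolate $\mathrm{tp}_\phi(a/A)$ step by step, at each stage either succeeding (and we are done) or finding a new witness $c$ of failure; distality is used to show the process cannot continue forever, since an infinite such sequence of witnesses, together with an indiscernible sequence built from the $a$-side, would directly contradict the definition of distal. Once the process halts, the conjunction of the formulas collected — finitely many honest definitions of $\phi$ and $\neg\phi$ relative to the successive parameters, all satisfied by $a$ with parameters in $A'$ by saturation — gives the required $\theta(x,b)$ with $\theta(x,b)\vdash\mathrm{tp}_\phi(a/A)$.
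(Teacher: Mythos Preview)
There is a variable-role confusion running through the whole proposal. In the statement, $a$ is the \emph{external} element and $A$ is the parameter set: $\tp_\phi(a/A)$ records the truth values of $\phi(a,c)$ for $c\in A$, and the desired $\theta(x,b)$ is a formula in the $x$-variable (the sort of $a$) with $b\in A'$. Ordinary honest definitions (Theorem~\ref{thm: Uniform honest definitions}) go the other way: they produce a formula $\theta(y,z)$ in the \emph{$A$-variable} $y$ approximating the trace $\phi(a,A)$ from inside. So your first paragraph, where you write ``$\models\theta_0(a,b_0)$ and $\theta_0(x,b_0)\subseteq\phi(x,a)$'', does not correspond to anything honest definitions provide; there is no mechanism that hands you a formula over $A'$ satisfied by $a$. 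The whole point of \emph{strong} honest definitions is that the isolating formula lives on the $a$-side, and this is exactly what requires distality.

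The contradiction argument inherits the same reversal. You propose to build $I'+a+J'$ indiscernible over $A$, with $I'+J'$ indiscernible over $Ac$, for a witness $c\in A$ to non-isolation. But $c\in A$, so $Ac=A$ and the purported distality violation is vacuous. The paper's argument runs the other way around: the indiscernible sequence lives on the $y$-side. One shows that $p=\tp(a/A')$ is \emph{orthogonal} to every $q\in S(A')$ finitely satisfiable in a small subset of $A'$. Given such $q$, Lemma~\ref{lem: weak local character} produces a small $D\subset A'$ over which any two realizations of $q^{(\omega)}$ in $A'$ have the same type with $a$; one takes $I\models q^{(\omega)}|_D$ inside $A'$ and $J\models q^{(\omega)}|_\M$, checks $I+J$ is indiscernible over $aC$, and then distality forces $I+b+J$ to be indiscernible over $aC$ for any $b\models q$. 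This yields orthogonality, and a compactness argument over the closed set $S^{\fs}(A',A)$ of types finitely satisfiable in $A$ then produces $\theta(x,b)\in p$ isolating $\tp_\phi(a/A)$. So: the sequence is in the $y$-sort, $a$ sits in the \emph{base}, and the inserted element is a realization of a coheir --- not $a$ itself.
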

\begin{proof}
Let $\left(M',A'\right)\succ\left(M,A\right)$ be $\kappa=\left|M\right|^{+}$-saturated,
we show that $p=\tp_{L}(a/A')$ is orthogonal to any $L$-type $q\in S(A')$
finitely satisfiable in a subset of size $<\kappa$. So take such
a $q$, finitely satisfiable in $C\subset A'$. By Lemma \ref{lem: weak local character},
there is some $D$ of size $<\kappa$, $C\subseteq D\subset A'$,
such that for any two realizations $I,I'\subset A'$ of $q^{\left(\omega\right)}|D$,
we have $\tp_{L}(aI/C)=\tp_{L}(aI'/C)$. Take some $I\models q^{\left(\omega\right)}|D$
in $A'$ (exists by saturation of $\left(M',A'\right)$ and finite
satisfiability) and $J\models q^{\left(\omega\right)}|\M$.
\begin{claim*}
$I+J$ is indiscernible over $aC$.\end{claim*}
\begin{proof}
As $q^{\left(\omega\right)}|\M$ is finitely satisfiable in $C$,
by compactness and saturation of $\left(M',A'\right)$ there is $J'\models q^{\left(\omega\right)}|aDI$
in $A'$. 

If $I+J$ is not $aC$-indiscernible, then $I'+J'$ is not $aC$-indiscernible
for some finite $I'\subset I$. As both $I'+J'$ and $J'$ realize
$q^{\left(\omega\right)}|D$ in $A'$, it follows that $J'$ is not
indiscernible over $aC$ -- a contradiction.
\end{proof}
Now, if $b\in\M$ is any realization of $q$, then $I+b+J$ is $C$-indiscernible.
By the claim and distality, $I+b$ is $aC$-indiscernible. It follows
that $\tp(b/Ca)$ is determined by $\tp(a/A')$. As we can always
take a bigger $C$, $\tp(b/A'a)$ is determined, so $p$ is orthogonal
to $q$ as required.

Consider the set $S^{\fs}(A',A)$ of $L$-types over $A'$ finitely
satisfiable in $A$. It is a closed subset of $S_{L}(A')$. By compactness,
there is $\theta(x,b)\in p(x)$ such that for any $a'\models\theta(x,b)$
and any $c\models q(y)\in S^{\fs}(A',A)$, $\models\phi(a,c)\leftrightarrow\phi(a',c)$.
This applies, in particular, to every $c\in A$ and thus $\theta(x,b)\vdash tp_{\phi}(a/A)$.\end{proof}
\begin{rem}
In fact, the argument is only using that every indiscernible sequence
in $A'$ is distal.\end{rem}
\begin{thm}
\label{thm: distal =00003D uniform strong honest definitions} The
following are equivalent:
\begin{enumerate}
\item $T$ is distal.
\item For any $\phi(x,y)$ there is $\theta(x,z)$ such that: for any $A$,
$a$ and a finite $C\subseteq A$, there is $b\in A$ such that $\models\theta(a,b)$
and $\theta(x,b)\vdash\tp_{\phi}(a/C)$
\end{enumerate}
\end{thm}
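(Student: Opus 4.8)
The plan is to prove the equivalence of distality and the uniform strong honest definition property by handling each direction separately, with the hard direction $(1)\Rightarrow(2)$ following the pattern already used to pass from honest definitions to \emph{uniform} honest definitions in Theorem~\ref{thm: Uniform honest definitions}.

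For the direction $(2)\Rightarrow(1)$, I would argue by contraposition: suppose $T$ is not distal, so there is an indiscernible sequence $I+b+J$ with $I,J$ infinite and a set $A$ over which $I+J$ is indiscernible but $I+b+J$ is not. Then there is a formula $\phi(x,y)$ (in the relevant tuple of variables) witnessing the failure: some finite tuple from $I+J$ together with $b$ realizes the ``wrong side'' of $\phi$ relative to $A$. The point is that no single $\theta(x,z)$ can provide, for every finite $C\subseteq A$, a formula $\theta(x,b')$ with $b'\in A$ that isolates $\tp_\phi(a/C)$ where $a$ is a suitable tuple from $b$ together with pieces of $I,J$; a compactness/indiscernibility argument shows such a $\theta$-definition would force $b$ into the indiscernible sequence. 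Essentially this is the failure of strong honest definitions witnessed concretely, so I expect this direction to be comparatively short once the right formula $\phi$ is extracted from the distality failure.

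For the main direction $(1)\Rightarrow(2)$: by Proposition~\ref{prop: isolated extension} (the strong honest definition), for any $\phi(x,y)$, passing to a sufficiently saturated elementary extension $(M',A')$ of $(M,A)$, for every $a$ there are $\theta(x,z)$ and $b\in A'$ with $\models\theta(a,b)$ and $\theta(x,b)\vdash\tp_\phi(a/A)$. The issue is that $\theta$ a priori depends on $a$ (and on $A$). First I would run the analogue of Proposition~\ref{prop: Weak uniformity for honest definitions}: by compactness applied to the theory of the pair, for a fixed $\phi$ there is a \emph{finite} list $\theta_1,\dots,\theta_k$ of formulas such that for every $(M,A)$, every $a$, and every finite $C\subseteq A$, some $\theta_i$ works --- i.e. there is $b\in A$ with $\models\theta_i(a,b)$ and $\theta_i(x,b)\vdash\tp_\phi(a/C)$. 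Then, exactly as in the proof of Theorem~\ref{thm: Uniform honest definitions}, I would collapse the finite list into a single formula: by the coding trick (using $|A|\geq 2$, or just adjusting the statement for small $A$) one combines $\theta_1,\dots,\theta_k$ into one $\theta(x,z)$ with an extra ``selector'' coordinate. Care is needed here because the strong honest definition is a property of \emph{consistency/implication} ($\theta(x,b)\vdash\tp_\phi(a/C)$) rather than of a set inclusion, so the $(p,k)$-theorem is not directly invoked as in Theorem~\ref{thm: Uniform honest definitions}; instead the collapse of the finite family is purely a disjunction-coding matter and the implication $\theta_i(x,b)\vdash\tp_\phi(a/C)$ survives because on the relevant branch of the coded disjunction $\theta$ agrees with $\theta_i$.

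The step I expect to be the main obstacle is the compactness argument producing the finite list $\theta_1,\dots,\theta_k$ uniformly in $(M,A)$: one must set up the theory $T'$ in the expanded language $L\cup\{\P(x),c\}$ asserting that for each $\theta\in L$ there is a finite subset $C_\theta\subseteq\P(M)$ such that no $b\in\P(M)$ satisfies both $\models\theta(c,b)$ and $\theta(x,b)\vdash\tp_\phi(c/C_\theta)$ --- but the clause ``$\theta(x,b)\vdash\tp_\phi(c/C_\theta)$'' is not a first-order condition on $(M,A,c)$ once $C_\theta$ has unbounded size, so one has to phrase it correctly: for each fixed finite size $m$ and each $\theta$, the statement ``there exist $m$ elements $d_1,\dots,d_m\in\P$ such that for all $b\in\P$ with $\models\theta(c,b)$, $\theta(x,b)$ does not decide $\phi(x,d_j)$ correctly for some $j$'' \emph{is} first-order, and Proposition~\ref{prop: isolated extension} gives inconsistency of the resulting type once we let $m$ grow, since over a saturated enough pair the $C$ in the proposition can be taken of bounded size depending only on $|T|$ and the variables. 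Getting this bookkeeping exactly right — in particular checking that the saturation hypothesis in Proposition~\ref{prop: isolated extension} translates into a bound on $|C|$ that is uniform across all pairs — is where I would spend the most effort; the final coding step is then routine.
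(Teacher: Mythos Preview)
Your plan for $(1)\Rightarrow(2)$ has a genuine gap: compactness alone does \emph{not} give a finite list $\theta_1,\dots,\theta_k$ that works for \emph{every} finite $C$. Exactly as in Proposition~\ref{prop: Weak uniformity for honest definitions}, what compactness yields is a finite list together with integers $n_{\theta_i}$ such that for each $(M,A,a)$ some $\theta_i$ works for all $C$ of size at most $n_{\theta_i}$. The passage from bounded-size $C$ to arbitrary finite $C$ is precisely where the $(p,k)$-theorem enters, and the paper's ``similarly to Theorem~\ref{thm: Uniform honest definitions}'' is meant to include it. Concretely: reduce to a single $\theta$, set $B=\{b\in A^{|z|}:\models\theta(a,b)\}$, and for $d\in C$ let $S_d=\{b\in B:\forall x\,(\theta(x,b)\rightarrow\phi(x,d)^{\epsilon(d)})\}$; weak uniformity says any $k$ of the $S_d$ intersect, NIP bounds the VC co-dimension of $\{S_d\}$, and the $(p,k)$-theorem yields $b_1,\dots,b_N$ hitting every $S_d$. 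Then $\chi(x,\bar b)=\bigwedge_{i\leq N}\theta(x,b_i)$ (a \emph{conjunction}, not a disjunction) satisfies $\chi(a,\bar b)$ and $\chi(x,\bar b)\vdash\tp_\phi(a/C)$. Your remark that ``the $C$ in the proposition can be taken of bounded size depending only on $|T|$'' is not correct: Proposition~\ref{prop: isolated extension} gives a $\theta$ deciding the full $\phi$-type over $A$, with no size bound in sight, and this does not translate into any uniform bound after compactness.

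For $(2)\Rightarrow(1)$, your contrapositive sketch is vague and does not isolate the mechanism. The paper argues directly and the idea is quite different from what you wrote: given $I+d+J$ indiscernible with $I+J$ indiscernible over $A$, take a finite $a\subset A$ and $\phi(x,y_0\dots y_{2n})$, apply (2) with $A=C=I_0$ a large finite subset of $I$ to get $b\subset I_0$ with $\theta(a,b)$ and $\theta(x,b)\vdash\tp_\phi(a/I_0)$. Choosing $b_0<\dots<b_{2n}$ in $I_0$ disjoint from $b$ gives $\forall x\,(\theta(x,b)\rightarrow\phi(x,b_0\dots b_{2n}))$; indiscernibility of $I+d+J$ lets you shift this to $\forall x\,(\theta(x,b')\rightarrow\phi(x,b'_0\dots d\dots b'_{2n}))$ with $b'\subset I+J$, and indiscernibility of $I+J$ over $a$ gives $\theta(a,b')$, whence $\phi(a,b'_0\dots d\dots b'_{2n})$. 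The point is that the parameter $b$ of the strong honest definition lives in the sequence and can be moved around by indiscernibility; your sketch does not suggest this.
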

\begin{proof}
(1)$\Rightarrow$(2): It follows from Proposition \ref{prop: isolated extension}
that we have: For any finite $C\subset A$, there is $b\in A$ such
that $\models\theta(a,b)$ and $\theta(x,b)\vdash\tp_{\phi}(a/C)$.
Similarly to the proof of Theorem \ref{thm: Uniform honest definitions},
we can choose $\theta$ depending just on $\phi$. 

(2)$\Rightarrow$(1): Let $I+d+J$ be an indiscernible sequence, with
$I$ and $J$ infinite. Assume that $I+J$ is indiscernible over $A$,
and we show that $I+d+J$ is indiscernible over $A$. 

Let $a$ be a finite tuple from $A$ and $\phi(x,y_{0}...y_{n}...y_{2n})\in L$,
and let $\theta(x,z)$ be as given for $\phi$ by (2). Without loss
of generality $\models\phi(a,b_{0}...b_{n}...b_{2n})$ holds for all
$b_{0}<...<b_{2n}\in I+J$. Let $I_{0}\subset I$ be finite. Then
for some $b\subset I_{0}$, $\models\theta(a,b)$ and $\theta(a,b)\vdash\tp_{\phi}(a/I_{0})$.
If we take $I_{0}$ to be large enough compared to $\left|z\right|$,
then there will be some $b_{0}<...<b_{n}<...<b_{2n}$ such that $\left\{ b_{i}\right\} _{i\leq2n}\cap b=\emptyset$.
As we have $\models\forall x\,\theta(x,b)\rightarrow\phi(x,b_{0}...b_{n}...b_{2n})$,
by indiscernibility of $I+d+J$ for any $\left\{ b'_{i}\right\} _{i\leq2n,i\neq n}$
in $I+J$ there is a corresponding $b'$ in $I+J$ such that $\models\forall x\,\theta(x,b')\rightarrow\phi(x,b'_{0}...d...b'_{2n})$.
As $\models\theta(a,b')$ holds by indiscernibility of $I+J$ over
$a$, it follows that $\models\phi(a,b_{0}...d...b_{2n})$ holds --
as wanted.\end{proof}
\begin{rem}
It follows from this theorem that types over finite sets in distal
theories admit uniform definitions of a special ``coherent'' form
as considered in \cite[Section 7.1]{VCdensity_50people}.
\end{rem}

\section{(p,k)-theorem and forking}

We recall some properties of dividing and forking in $\NIP$ theories.
\begin{fact}
\label{Properties of forking in NIP} Let $T$ be $\NIP$.
\begin{enumerate}
\item If $M\models T$, then $\phi(x,a)$ divides over $M$ $\Leftrightarrow$
it forks over $M$ $\Leftrightarrow$ the set $\left\{ \phi(x,a')\,:\, a\equiv_{M}a'\in\M\right\} $
is inconsistent.
\item For any $\phi(x,y)$, the set $\left\{ a:\phi(x,a)\mbox{ forks over }M\right\} $
is type-definable over $M$.
\item If $\left(a_{i}\right)_{i<\omega}$ is indiscernible over $M$ and
$\phi(x,a_{0})$ does not fork over $M$, then $\{\phi(x,a_{i})\}_{i<\omega}$
does not fork over $M$.
\item $\phi(x,a)$ does not fork over $M$ $\Leftrightarrow$ there is a
global $M$-invariant type $p$ with $\phi(x,a)\in p$.
\end{enumerate}
\end{fact}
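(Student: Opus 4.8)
The plan is to obtain all four items by assembling the forking/dividing machinery of \cite{CheKap}, the genuinely hard ingredients being quoted from there as black boxes. Two things drive the argument. First, every model $M$ is an \emph{extension base}: every $p\in S(M)$ extends to a global coheir (a type finitely satisfiable in $M$), and coheirs are $M$-invariant. Second, over an extension base forking equals dividing --- this holds even without the tree property of the second kind, in particular in $\NIP$, and is the main theorem of \cite{CheKap}. I will also use the trivial remark that if $p$ is a global $M$-invariant type and $(b_i)_{i<\omega}$ is a Morley sequence of $p$ over $M$, then all $b_i$ realise the same type over $M$, namely $p|_M$.

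I would prove (1) first. ``Divides $\Rightarrow$ forks'' is the definition of forking, and ``forks $\Rightarrow$ divides'' is the extension-base theorem, available because $M$ is a model. For ``divides $\Rightarrow$ the orbit $\{\phi(x,a'):a'\equiv_{M}a\}$ is inconsistent'': if $(b_i)_{i<\omega}$ is an $M$-indiscernible sequence with $b_0=a$ witnessing $k$-inconsistency of $\{\phi(x,b_i)\}$, then every $b_i$ realises $\tp(a/M)$, so this $k$-inconsistent set lies inside the orbit. Conversely, if the orbit is inconsistent, I take a Morley sequence $(b_i)_{i<\omega}$ over $M$ in a global coheir of $\tp(a/M)$ --- this is exactly where being a model is used --- and apply an automorphism so that $b_0=a$; each $b_i$ realises $\tp(a/M)$, so $\{\phi(x,b_i):i<\omega\}$ is contained in the orbit, hence inconsistent, hence $n$-inconsistent for some $n$ by compactness and indiscernibility, exhibiting dividing.

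The remaining items follow formally. For (4), ``$\Leftarrow$'': if $\phi(x,a)\in p$ for a global $M$-invariant $p$ but $\phi(x,a)$ forks over $M$, write $\phi(x,a)\vdash\bigvee_{j<m}\psi_j(x,c_j)$ with each $\psi_j(x,c_j)$ dividing over $M$; some $\psi_{j_0}(x,c_{j_0})\in p$, and by $M$-invariance $\psi_{j_0}(x,c')\in p$ for every $c'\equiv_{M}c_{j_0}$, so the orbit of $\psi_{j_0}(x,c_{j_0})$ is consistent, contradicting (1). The direction ``$\Rightarrow$'' is quoted from \cite{CheKap}: a formula not forking over the model $M$ extends to a global type not forking over $M$, and in $\NIP$ such a global type is automatically $M$-invariant. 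For (3): by (4) there is a global $M$-invariant $p$ with $\phi(x,a_0)\in p$; the $a_i$ lie on an $M$-indiscernible sequence, so $a_i\equiv_{M}a_0$ for all $i$, and $M$-invariance of $p$ then forces $\phi(x,a_i)\in p$ for all $i$, whence $\{\phi(x,a_i):i<\omega\}\subseteq p$ does not fork over $M$. For (2): by (1), $\phi(x,a)$ does not fork over $M$ iff $\exists c\,\forall a'\,(a'\equiv_{M}a\rightarrow\models\phi(c,a'))$; now $\{(a,c,a'):a'\equiv_{M}a\ \wedge\ \models\neg\phi(c,a')\}$ is type-definable over $M$, projecting out $a'$ keeps it type-definable over $M$, its complement is then ind-definable over $M$, and projecting out $c$ keeps it ind-definable over $M$, so the non-forking locus is ind-definable over $M$ and $\{a:\phi(x,a)\text{ forks over }M\}$ is type-definable over $M$.

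The hard part is not in this bookkeeping but in the two statements imported from \cite{CheKap} --- ``forking $=$ dividing over a model'', used for (1), and the forward direction of (4) --- which I would simply cite. The one point requiring a little care in the self-contained part is in (1): one must test the orbit against a Morley sequence coming from a genuine global invariant extension of $\tp(a/M)$, and the existence of such an extension is precisely what the hypothesis ``$M$ is a model'' provides.
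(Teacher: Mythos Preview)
There is a genuine gap in your argument for the direction ``orbit inconsistent $\Rightarrow$ divides'' in (1). You take a Morley sequence $(b_i)$ in a global coheir of $\tp(a/M)$ and assert that since $\{\phi(x,b_i):i<\omega\}$ is contained in the (inconsistent) orbit, it is itself inconsistent. But a subset of an inconsistent set of formulas need not be inconsistent: the orbit may fail only because of realizations $a'\equiv_M a$ that never occur along your particular sequence. Coheir Morley sequences are not, in general, universal witnesses for dividing in $\NIP$ --- that is the role of \emph{strict} Morley sequences in \cite{CheKap} --- so nothing forces $\{\phi(x,b_i)\}$ to be inconsistent here.

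The fix is to reorder. Establish (4) first: you prove $\Leftarrow$ correctly using only the easy half of (1) (``divides $\Rightarrow$ orbit inconsistent''), and $\Rightarrow$ is the cited black box (the paper in fact attributes it to \cite{Adl} rather than \cite{CheKap}). Then deduce the missing half of (1) from (4): if $\phi(x,a)$ does not divide, hence does not fork, then by (4) it lies in some global $M$-invariant $p$, and invariance puts the whole orbit inside $p$, so the orbit is consistent. Your arguments for (2), (3), and (4)$\Leftarrow$ are fine once (1) is available. For comparison, the paper itself simply cites (1), (2), (4) from the literature and writes out only (3) --- exactly via (4), as you do --- so your treatment is more detailed than the paper's, and with this reordering it becomes correct.
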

\begin{proof}
(1) and (2) are by \cite[Theorem 1.1]{CheKap} and \cite[Remark 3.33]{CheKap},
(4) is from \cite{Adl}. Finally, (3) is well-known and follows from
(4). Indeed, if $\phi(x,a_{0})$ does not fork over $M$ then it is
contained in some global type $p(x)$ invariant over $M$. But then
by invariance $\{\phi(x,a_{i})\}_{i<\omega}\subseteq p(x)$, thus
does not fork over $M$. \end{proof}
\begin{defn}
Let $M$ be a small model. We say that $(\phi(x,y),q(y))$ (where
$\phi\in L(M)$ and $q$ is a partial type over $M$) is a \emph{non-forking
family over $M$} if for every $a\models q(y)$, the formula $\phi(x,a)$
does not fork over $M$.
\end{defn}
Notice that by Fact \ref{Properties of forking in NIP}(2), if $\left(\phi(x,y),q(y)\right)$
is a non-forking family, then there is some formula $\psi(y)\in q$
such that $\left(\phi(x,y),\psi(y)\right)$ is a non-forking family.
\begin{prop}
\label{prop: pqinv} Let $(\phi(x;y),q(y))$ be a non-forking family
over $M$, then there are finitely many global $M$-invariant types
$p_{1},...,p_{n-1}$ such that for every $a\models q(y)$, there is
$i<n$ with $p_{i}\vdash\phi(x;a)$. \end{prop}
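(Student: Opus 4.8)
The plan is to combine the $(p,k)$-theorem (Fact~\ref{Fac: (p,k)-theorem}) with the properties of forking recorded in Fact~\ref{Properties of forking in NIP}. First I would pass to a single formula cutting out the parameter set: by Fact~\ref{Properties of forking in NIP}(2) the set $\{a : \phi(x,a) \text{ forks over } M\}$ is type-definable over $M$, and since $(\phi(x,y),q(y))$ is a non-forking family we may (as remarked after the definition) replace $q$ by a single formula $\psi(y)\in L(M)$ with $Q=\psi(\M)$ disjoint from that forking locus; so from now on every $\phi(x,a)$ with $a\in Q$ is non-forking over $M$. The goal is then: finitely many global $M$-invariant types $p_0,\dots,p_{n-1}$ so that each such $\phi(x,a)$ lies in one of them.

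Next I would set up the dual $(p,k)$ configuration on the space of $M$-invariant types. Let $k = \VCcodim(\phi) + 1$ (finite, since $T$ is $\NIP$). The key consistency input is Fact~\ref{Properties of forking in NIP}(1) together with (3)/(4): if $a_0,\dots,a_{k-1}\in Q$, I want to know that $\{\phi(x,a_0),\dots,\phi(x,a_{k-1})\}$ is consistent — but this is false in general, so instead I work with a cleverly chosen family. The right move (following the forking-via-invariant-types philosophy) is to consider the family $\mathcal{F} = \{\, U_a : a\in Q \,\}$ where $U_a$ is the set of global $M$-invariant types containing $\phi(x,a)$, viewed inside a suitable ``space of invariant types'' — concretely, by Fact~\ref{fac: MS determines invariant type} and Lemma~\ref{lem: weak local character} an $M$-invariant type is coded by a Morley-sequence type over a small set, so one can replace this space by a genuine set $X$ on which the $U_a$ are subsets of bounded VC co-dimension (inherited from $\VCcodim(\phi)$). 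By Fact~\ref{Properties of forking in NIP}(4), each $U_a$ is nonempty since $\phi(x,a)$ does not fork over $M$. The heart of the argument is to verify a $(p,k)$-property: among any $p$ of the sets $U_{a_1},\dots,U_{a_p}$, some $k$ of them have a common point, i.e. some single global $M$-invariant type contains $k$ of the $\phi(x,a_i)$. This I would extract from Fact~\ref{Properties of forking in NIP}: by a Ramsey/pigeonhole argument on an indiscernible sequence in $Q$ over $M$ (using (3) to spread one non-forking instance to the whole sequence, and (1) to turn non-forking into consistency of the conjugates), any large enough collection of non-forking instances with parameters in $Q$ must have $k$ members lying in a common invariant type.

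Once the $(p,k)$-property is in hand, Fact~\ref{Fac: (p,k)-theorem} (applied to every finite subfamily, then compactness, exactly as in the proof of Theorem~\ref{thm: Uniform honest definitions}) yields an integer $N$ and finitely many points of $X$ — i.e. finitely many global $M$-invariant types $p_0,\dots,p_{N-1}$ — hitting every $U_a$, $a\in Q$. That says precisely that for each $a\models q(y)$ there is $i<N$ with $\phi(x,a)\in p_i$, which is the conclusion (with $n = N$, relabelling indices to match $p_1,\dots,p_{n-1}$). The main obstacle I expect is making the ``space of $M$-invariant types'' into an honest set $X$ of bounded VC co-dimension and verifying that the $U_a$ are genuinely definable-like subsets there: one has to use Fact~\ref{fac: MS determines invariant type} to identify an invariant type with the restriction of its Morley sequence type to $M$, bound the relevant parameters via Lemma~\ref{lem: weak local character}, and check that membership $\phi(x,a)\in p$ translates into a relation whose VC co-dimension is controlled by that of $\phi$; the other delicate point is the Ramsey argument establishing the $(p,k)$-property, where one must be careful that passing to an indiscernible sequence over $M$ keeps the parameters inside $Q$ (which is why $Q$ was taken $M$-definable).
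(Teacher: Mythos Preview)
Your overall strategy---reduce to a definable $Q$, set up a family indexed by $a\in Q$ on a ``space of $M$-invariant types,'' verify the $(p,k)$-property via Ramsey and Fact~\ref{Properties of forking in NIP}(2)(3), then apply Fact~\ref{Fac: (p,k)-theorem} and compactness---is exactly the paper's. The one place you diverge is the construction of the base set $X$, and the obstacle you flag (making the space of invariant types into an honest set with controlled VC co-dimension, via Morley-sequence coding and Lemma~\ref{lem: weak local character}) is real but entirely avoidable. The paper's move is simpler: fix an $|M|^{+}$-saturated $N\succ M$ and take
\[
X=\{x\in\M:\tp(x/N)\text{ is }M\text{-invariant}\},
\]
which is type-definable over $N$ by the scheme $\{\phi(x,a)\leftrightarrow\phi(x,b):a\equiv_M b\text{ in }N\}$. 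Then $\mathcal{F}=\{X\cap\phi(x,a):a\in q(N)\}$ is a family of subsets of an honest (partial-type) set, and its VC co-dimension is trivially bounded by $\VCcodim(\phi)$ since the sets are traces of $\phi$-instances. The translation you need---that $\bigcap_{j<k}\bigl(X\cap\phi(x,a_{i_j})\bigr)=\emptyset$ iff $\bigwedge_{j<k}\phi(x,a_{i_j})$ forks over $M$---is immediate from Fact~\ref{Properties of forking in NIP}(4) and saturation of $N$, with no coding required. After that, your Ramsey argument and the compactness step (realizing the partial type $\{x_i\in X\}_{i<n}\cup\{\bigvee_{i<n}\phi(x_i,a):a\in q(N)\}$) go through exactly as you describe.
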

\begin{proof}
Let $M\prec N$ be such that $N$ is $|M|^{+}$-saturated. 

Consider the set $X=\left\{ x\in\M\,:\,\mbox{tp}(x/N)\mbox{ is }M\mbox{-invariant}\right\} $,
it is type-definable over $N$ by $\left\{ \phi(x,a)\leftrightarrow\phi(x,b)\,:\, a,b\in N,\, a\equiv_{M}b,\,\phi\in L\right\} $.
Let $\mathcal{F}\overset{\mbox{def}}{=}\{Y\subseteq X\,:\, Y=X\cap\phi(x,a),\, a\in q(N)\}$,
and notice that the dual $\mbox{VC}$-dimension of $\mathcal{F}$
is finite, say $k$ (as $\phi(x,y)$ is $\NIP$).

Assume that for any $p<\omega$, $\mathcal{F}$ does not satisfy the
$\left(p,k\right)$-property. As by Fact \ref{Properties of forking in NIP}(2)
the set $\{\left(a_{0}...a_{k-1}\right)\,:\,\phi(x,a_{0}...a_{k-1})\mbox{ forks over }M\}$
is type-definable, by Ramsey, compactness and Fact \ref{Properties of forking in NIP}(4)
we can find an $M$-indiscernible sequence $\left(a_{i}\right)_{i<\omega}\subseteq q(N)$
such that $\bigwedge_{i<k}\phi(x,a_{i})$ forks over $M$, contradicting
Fact \ref{Properties of forking in NIP}(3) and the assumption on
$q$.

Thus $\mathcal{F}$ satisfies the $(p,k)$-property for some $p$.
Let $n$ be as given by Fact \ref{Fac: (p,k)-theorem} and define
\[
Q(x_{0},...,x_{n-1})\overset{\mbox{def}}{=}\{x_{i}\in X\}_{i<n}\cup\left\{ \bigvee_{i<n}\phi(x_{i},a)\,:\, a\in q(N)\right\} .
\]
 As every finite part of $Q$ is consistent by Fact \ref{Fac: (p,k)-theorem},
there are $b_{0}...b_{n-1}$ realizing it, take $p_{i}\overset{\mbox{def}}{=}\mbox{tp}(b_{i}/N)$.\end{proof}
\begin{rem}
If $q(x)$ is a complete type then this holds with $n=1$, just by
taking some $M$-invariant $p_{0}(x)$ containing $\phi(x,a)$.
\end{rem}
However, we cannot hope to replace invariant $\phi$-types by definable
$\phi$-types in the proposition.
\begin{example}
Let $T$ be the theory of a complete discrete binary tree with a valuation
map. Let $M_{0}$ be the prime model, and take $c$ an element of
valuation larger than $\Gamma(M_{0})$. Let $d$ be the smallest element
in $M_{0}$. Let $\phi(x;y,z)$ say ``if $z=d$, then $\mbox{val}(x)>\mbox{val}(y)$,
if $z\neq d$, then $x>y$'' (where $>$ is the order in the tree).
Let $\psi(y,z)=$ ``$z=d$''. Then $(\phi,\psi)$ is a non-forking
family over $M$, however there is no definable $\phi$-type consistent
with $\phi(x;c,d)$.\end{example}
\begin{rem}
In \cite{CotterStarchenko} it is proved that if $T$ is a $VC$-minimal
theory with unpacking and $M\models T$, then $\phi(x,a)$ does not
fork over $M$ if and only if there is a global $M$-definable type
$p(x)$ such that $\phi(x,a)\in p$. The previous example shows that
the same result cannot hold in a general $\NIP$ theory.\end{rem}
\begin{problem}
Assume $\phi(x,a)$ does not fork over $M$. Is there a formula $\psi(y)\in\tp(a/M)$
such that $\left\{ \phi(x,a)\,:\,\models\psi(a)\right\} $ is consistent
(and thus does not fork over $M$)?
\end{problem}

\section{Sufficient conditions for boundedness of $T_{\P}$}

In \cite{ExtDefI} we have demonstrated the following result.
\begin{fact}
\label{fac: boundedness implies NIP}
\begin{enumerate}
\item Let $\left(M,A\right)$ be bounded. If $M$ is $\NIP$ and $A_{\ind}$
is $\NIP$, then $\left(M,A\right)$ is $\NIP$.
\item Let $\left(M,A\right)$ be bounded and $A\prec M$. If $M$ is $\NIP$
then $\left(M,A\right)$ is $\NIP$.
\end{enumerate}
\end{fact}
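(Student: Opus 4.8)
The plan is to prove (1); part (2) then follows at once, since if $A\prec M$ the induced structure $A_{\ind}$ is just $A$ with its $L$-structure (here the bounded formulas whose free variables and quantifiers are confined to $\P$ define nothing beyond the $L(A)$-definable sets, because $A\prec M$), which is elementarily equivalent to $M$ and hence $\NIP$ — so (2) is the special case of (1) in which the hypothesis on $A_{\ind}$ is automatic. For (1), I would pass to a sufficiently saturated model of $\Th(M,A)$, still written $(M,A)$. By $\P$-boundedness every $L_{\P}$-formula is equivalent to one of the form $\psi(\bar x;\bar y)=Q_1t_1\in\P\,\cdots\,Q_mt_m\in\P\ \phi(\bar x,\bar y,\bar t)$ with $\phi\in L$, so it suffices to show every such $\psi$ is $\NIP$, and I would do this by induction on $m$, with induction hypothesis that \emph{every} bounded $L_{\P}$-formula with fewer than $m$ $\P$-quantifiers is $\NIP$ under \emph{every} partition of its free variables into object and parameter variables. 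The base case $m=0$ is clear: an alternating configuration witnessing $\IP$ of an $L$-formula in $(M,A)$ uses only elements of $M$, hence already witnesses $\IP$ in the $L$-reduct, which is $\NIP$.

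For the inductive step I would write $\psi=Q_1t_1\in\P\ \chi(\bar x,t_1;\bar y)$, where $\chi$ is a bounded formula with $m-1$ $\P$-quantifiers and so, by the induction hypothesis, is $\NIP$; replacing $\psi$ and $\chi$ by their negations if necessary (which changes neither $\NIP$-ness nor $m$), I may assume $Q_1=\exists$, so that $\psi(\bar x;\bar y)\equiv\exists t\,(\P(t)\wedge\chi(\bar x,t;\bar y))$ with $\chi$ an $\NIP$ bounded formula. Assuming $\psi$ has $\IP$, I would extract an $(M,A)$-indiscernible sequence $(\bar a_i)_{i<\omega}$ together with tuples $(\bar b_w)_w$, $w$ ranging over finite subsets of $\omega$, shattering it: $\models\psi(\bar a_i;\bar b_w)\Leftrightarrow i\in w$. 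For $i\in w$ fix a witness $c^w_i\in A$ with $\models\chi(\bar a_i,c^w_i;\bar b_w)$; for $i\notin w$ we have $\models\neg\chi(\bar a_i,t;\bar b_w)$ for every $t\in A$. The goal is to turn this into a configuration contradicting the $\NIP$-ness of $A_{\ind}$ (or, in the degenerate case where no genuine choice of witness is involved, of the $L$-reduct).

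The step I expect to be the main obstacle is precisely the uniform control of the witnesses $c^w_i$. Here one exploits that $M$ itself — not yet the pair — is known to be $\NIP$, so that externally definable subsets of $M$ admit honest definitions (Fact \ref{fac: honest definitions exist}). Applying this to the trace on $A$ of the $L$-set cut out by the matrix $\phi$ should yield a single $L$-formula that, uniformly in the parameters, selects candidate witnesses $t\in A$ from among finitely many $L(A)$-definable pieces. Feeding these uniform witnesses into $\chi$ and peeling its remaining (inner) $\P$-quantifiers via the induction hypothesis, the shattering pattern ``$i\in w\iff\psi(\bar a_i;\bar b_w)$'' should become a shattering by an $L_{\P}$-formula all of whose free variables and quantifiers lie in $\P$, i.e.\ an $\IP$ configuration for $A_{\ind}$, contradicting our hypothesis. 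I would emphasise that a bare VC-dimension count will not close the gap: the family of trace sets $\{\chi(\bar a,t;\bar b)\cap A\}$ can perfectly well have finite VC dimension while its non-emptiness pattern still shatters, so the witnesses have to be genuinely transported, and it is the honest definition over $M$ that permits doing this uniformly. With the induction in place, applying it to every $L_{\P}$-formula shows $(M,A)$ is $\NIP$.
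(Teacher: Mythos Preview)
The paper does not prove this statement: it is stated as a Fact with the remark ``In \cite{ExtDefI} we have demonstrated the following result,'' so there is no proof here to compare against. Your reduction of (2) to (1) is correct, and the inductive skeleton for (1) --- induction on the number of $\P$-quantifiers, base case handled by $\NIP$ of $M$, reduce the outer $\exists t\in\P$ to a question about $A_{\ind}$ --- is the natural shape of the argument and matches what one finds in \cite{ExtDefI}.

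That said, the heart of your proposal is not a proof but an acknowledgement of where the proof should go. You write that honest definitions applied to ``the trace on $A$ of the $L$-set cut out by the matrix $\phi$'' should let you ``select candidate witnesses $t\in A$ from among finitely many $L(A)$-definable pieces,'' and then ``feeding these uniform witnesses into $\chi$'' should push the shattering into $A_{\ind}$. Two concrete problems:
\begin{itemize}
\item Honest definitions give, for each fixed external parameter $c$, a $\theta(x,z)$ and $d\in A'$ with $\phi(A,c)=\theta(A,d)$ and $\theta(A',d)\subseteq\phi(A',c)$. They do not, as stated, produce a \emph{uniform} mechanism for selecting a witness $t\in A$ to an existential; they approximate a definable set from inside, which is a different operation. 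You have not said what set you are taking the honest definition of (the parameter is presumably $(a_i,b_w)$, but then the honest parameter $d_{i,w}$ lives in $A'$ and varies with both $i$ and $w$), nor how the resulting $\theta$ interacts with the inner $\P$-quantifiers of $\chi$.
\item Even granting some replacement $\phi(a_i,b_w,\bar t)\leftrightarrow\theta(\bar t;d_{i,w})$ on $A$, the expression $Q_1 t_1\in A\cdots Q_m t_m\in A\,\theta(\bar t;d_{i,w})$ is not a formula of $A'_{\ind}$: the quantifiers range over $A$, not $A'$, and $A$ is not definable in $A'_{\ind}$. Passing to $(M',A')$ and using elementarity makes the quantifiers range over $A'$, but then the honest definition only gives $\theta\subseteq\phi$ on $A'$, not equality. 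Bridging this gap is exactly the technical content of the result in \cite{ExtDefI}, and your sketch does not supply it.
\end{itemize}
You correctly flag this as ``the main obstacle,'' but the proposal as written is a plan to locate the difficulty rather than to overcome it. Your remark that ``a bare VC-dimension count will not close the gap'' is apt; unfortunately the positive mechanism you offer in its place is not yet a mechanism.
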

However, a general sufficient condition for the boundedness of an
expansion by a predicate for $\NIP$ theories is missing. In the stable
case, a satisfactory answer is given in \cite{CaZi}. Recall:
\begin{defn}

\begin{enumerate}
\item $T$ satisfies $\nfcp$ (no finite cover property) if for any $\phi(x,y)$
there is $k<\omega$ such that for any $A$, if $\left\{ \phi(x,a)\right\} _{a\in A}$
is $k$-consistent, then it is consistent.
\item We say that $M\models T$ satisfies $\nfcp$ over $A\subset M$ if
for any $\phi(x,y,z)$ there is $k<\omega$ such that for any $A'\subseteq A$
and $b\in M$, if $\left\{ \phi(x,a,b)\right\} _{a\in A'}$ is $k$-consistent,
then it is consistent.
\end{enumerate}
\end{defn}
And then one has:
\begin{fact}
\label{CZ}Let $T$ be stable.
\begin{enumerate}
\item \cite[Proposition 2.1]{CaZi} Assume that $A\subset M\models T$ is
small and $M$ has $\nfcp$ over $A$. Then $\left(M,A\right)$ is
bounded.
\item \cite[Proposition 4.6]{CaZi} In fact, ``$\nfcp$ over $A$'' can
be relaxed to ``$A_{\ind}$ is $\nfcp$''. 
\end{enumerate}
\end{fact}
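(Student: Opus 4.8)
The plan is to run a single back-and-forth argument, proving (2); since the hypothesis is used only in the form ``$A_{\ind}$ is $\nfcp$'', this simultaneously gives (1). Throughout, $T$ is stable, $A$ is small in $M$, and $A_{\ind}$ is $\nfcp$.

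\emph{Reduction to a back-and-forth criterion.} By $\nfcp$ together with stability, every $L$-type over $A$ with parameters in $A$ is definable by a scheme of one of finitely many fixed shapes (depending only on the formulas involved), so the assertion ``$(M,A)$ is small'' --- every such scheme that defines a consistent complete type has a realisation in $M$ --- becomes a first-order scheme on the pair; likewise ``$A_{\ind}$ is $\nfcp$'' is a first-order scheme (a numerical bound plus definability of the relevant consistency relations). Hence both hypotheses pass to elementary extensions of $(M,A)$. By the standard criterion for quantifier elimination relative to the fragment of \emph{bounded} $L_{\P}$-formulas, it now suffices to show: for $\kappa$-saturated $(N,B),(N',B')\models\Th(M,A)$ with $\kappa$ large and tuples $\bar c\in N$, $\bar c'\in N'$ realising the same bounded $L_{\P}$-type, the pair of maps between the finite parts can be extended back and forth, keeping the invariant ``the matched finite tuples are related by a map that is $L$-elementary over the $\P$-parts (with respect to the partial bijection $B\to B'$ being built) and sends $\P$ to $\P$'' --- which, in saturated pairs, is the same as realising the same bounded $L_{\P}$-type.

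\emph{The one-step extension (the crux).} Given such a partial map $f\colon\bar c\mapsto\bar c'$ and $d\in N$, we seek $d'\in N'$ with $d\in B\Leftrightarrow d'\in B'$ and $\bar cd\mapsto\bar c'd'$ again $L$-elementary over $B,B'$. By stability, $\tp_{L}(\bar cd/B)$ is definable over $B$; by $\nfcp$ of $A_{\ind}$ --- applied to the $A_{\ind}$-definable families of subsets of $B$ given by $\phi(\bar cd,z)$, $z$ ranging over $\P$ --- its defining scheme $\{d_{\phi}\}_{\phi\in L}$ can be taken of a fixed finite shape, with parameters $\bar b$ controlled (algebraic of bounded degree) over $B$, and with ``$\{d_{\phi}\}$ defines a consistent complete type over $B$ extending $\tp_{L}(\bar c/B)$'' expressed by first-order conditions. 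All of this data --- $\tp_{L}(\bar cd)$ and a bounded set of conjugates of $\bar b$ --- is recorded in the bounded $L_{\P}$-type of $\bar cd\bar b$, since ``$\forall z\in\P\,(\phi(\bar cd,z)\leftrightarrow d_{\phi}(z,\bar b))$'' and the consistency/completeness conditions are bounded $L_{\P}$-sentences about $\bar c,d,\bar b$. Transporting this data through $f$ and invoking saturation and elementarity of $(N',B')$ yields $\bar b'\in N'$ for which the conjugate scheme defines a consistent complete $L$-type over $B'$ extending $\tp_{L}(\bar c'/B')$; by smallness of $B'$ in $N'$ this type, taken together with $\tp_{L}(\bar c')$, is realised by some $d'\in N'$. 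Then $\tp_{L}(\bar c'd'/B')$ matches $\tp_{L}(\bar cd/B)$ through $f$, so the invariant survives; and $d\in B\Leftrightarrow d'\in B'$ is automatic, because $\P(d)$ or $\neg\P(d)$ lies in the bounded type, and when $d\in B$ the extension of $f$ on the $\P$-part already pins down $d'$. The symmetric step is the same, and the usual bookkeeping produces the desired maps.

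\emph{Where the work is.} The only non-routine point is the claim above that the defining scheme of $\tp_{L}(\bar cd/B)$ can be chosen of a fixed finite shape, with parameters controlled over $B$ and with definable consistency: this is exactly the structural content of the absence of the finite cover property, and is precisely what $M$ having $\nfcp$ over $A$, or already $A_{\ind}$ being $\nfcp$, supplies; seeing that the weaker hypothesis is enough amounts to checking that the families of subsets of $B$ occurring in the argument are $A_{\ind}$-definable, which is clear. The remaining ingredients --- transfer of first-order data along $f$, existence of realisations by smallness, and persistence of the invariant --- are routine.
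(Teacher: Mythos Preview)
The paper does not give its own proof of this statement: it is recorded as a \emph{Fact} with explicit citations to Casanovas--Ziegler \cite{CaZi}, Propositions~2.1 and~4.6, and is used purely as background motivation for the later results on $\dnfcp$. There is therefore nothing in the paper to compare your argument against.

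That said, your sketch does follow the Casanovas--Ziegler strategy: pass to a saturated pair, run a back-and-forth preserving the invariant ``same $L$-type over the predicate'' (equivalently, same bounded $L_{\P}$-type), and at each step use definability of types over $B$ (from stability) to encode $\tp_{L}(\bar cd/B)$ by a finite parameter tuple, then transport and realise using smallness. One point of phrasing to tighten: you credit $\nfcp$ with making the defining scheme ``of a fixed finite shape, with parameters controlled over $B$'', but that uniformity is already a consequence of stability (uniform definability of $\phi$-types). What $\nfcp$ over $A$, or $\nfcp$ of $A_{\ind}$, actually buys is that the predicate ``the tuple $\bar b$ codes, via the fixed scheme, a \emph{consistent} complete type over $\P$'' is expressible by a bounded $L_{\P}$-formula --- this is what makes smallness and the back-and-forth invariant first-order and hence transferable to saturated extensions. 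You do say this further down, so the content is there; just be careful not to conflate the two roles.
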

In this section we present results towards a possible generalization
for unstable $\NIP$ theories.

\subsection{Dnfcp (nfcp for definable sets of parameters)}
\begin{defn}
We say that $M$ satisfies $\dnfcp$ over $A\subseteq M$ if for any
$\phi(x,y,z)$ there is $k\in\omega$ such that: for any $b\in M$,
if $\left\{ \phi(x,a,b)\,:\, a\in A\right\} $ is $k$-consistent,
then it is consistent.
\end{defn}
We remark that $\dnfcp$ over $A$ is an elementary property of the
pair $\left(M,A\right)$.
\begin{lem}

\begin{enumerate}
\item $\nfcp$ over $A$ $\Rightarrow$ $\dnfcp$ over $A$.
\item If $T$ is stable and $M\models T$, then $\nfcp$ $\Leftrightarrow$
$\nfcp$ over $M$ $\Leftrightarrow$ $\dnfcp$ over $M$.
\end{enumerate}
\end{lem}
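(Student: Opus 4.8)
The plan is to prove (1) by a one-line specialization and to prove (2) by running the cycle
\[
\nfcp \ \Longrightarrow\ \nfcp\text{ over }M\ \Longrightarrow\ \dnfcp\text{ over }M\ \Longrightarrow\ \nfcp,
\]
in which every implication but the last is pure bookkeeping and stability is used only at the last step. For (1): the hypothesis ``$\nfcp$ over $A$'' asserts that for each $\phi(x,y,z)$ some $k$ works for every $A'\subseteq A$ and every $b\in M$; instantiating $A':=A$ yields verbatim ``$\dnfcp$ over $A$'' (with the same $k$). For the first implication of (2): given $\phi(x,y,z)$, view it as $\phi(x;w)$ with the single parameter variable $w=yz$ and apply $\nfcp$; the resulting $k$ already witnesses ``$\nfcp$ over $M$'', indeed ``$\nfcp$ over $B$'' for every $B\subseteq M$, since for $B'\subseteq B$ and $b\in M$ the set $\{\phi(x,a,b):a\in B'\}$ is a set of $\phi(x;w)$-instances with parameters in $\M$. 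The second implication of (2) is part (1) with $A=M$.

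The content is the third implication, that ``$\dnfcp$ over $M$'' implies ``$\nfcp$'' for $T$ stable; I would prove it by contraposition. Assume $T$ fails $\nfcp$. By the classical characterization of the finite cover property in stable theories (see e.g.\ \cite{CaZi}), there is a formula $\theta(x,y;\bar z)$ which, after replacing it by ``$\theta$ if $\theta(\cdot,\cdot;\bar z)$ is an equivalence relation and $x=y$ otherwise'' (a first-order modification), defines an equivalence relation in $(x,y)$ for every $\bar z$, and is such that the set of finite values taken by the number of $\theta(\cdot,\cdot;\bar c)$-classes, as $\bar c$ varies, is unbounded.

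Now fix $k<\omega$ and choose an attained finite class-count $N>k$, witnessed by some $\bar c$ in $\M$ for which $\theta(\cdot,\cdot;\bar c)$ has exactly $N$ classes. The sentence
\[
\exists\bar z\,\exists a_1\ldots a_N\Bigl(\bigwedge_{i\ne j}\neg\theta(a_i,a_j;\bar z)\ \wedge\ \forall x\bigvee_{i\le N}\theta(x,a_i;\bar z)\Bigr)
\]
is true in $\M$, hence in $M$ (as $M\prec\M$), so we may take $\bar c$ together with class representatives $a_1,\ldots,a_N$ inside $M$; then ``exactly $N$ classes'' for $\theta(\cdot,\cdot;\bar c)$ holds in $M$ and so in $\M$. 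Put $\psi(x,y,z):=\neg\theta(x,y;z)$ and $b:=\bar c$. The set $\{\psi(x,a,b):a\in M\}$ contains $\neg\theta(x,a_i;\bar c)$ for every $i\le N$, hence is inconsistent (every point of $\M$ lies in one of the $N$ classes); but any $\le k<N$ of its members forbid at most $k$ of the $N$ classes and are therefore realized, so it is $k$-consistent. Letting $k$ vary, $\psi$ together with the parameters $\bar c$ witnesses the failure of ``$\dnfcp$ over $M$'', completing the contrapositive.

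I expect this third implication to be the only real difficulty. The obstacle is that a generic $\nfcp$-violation is a finite set $\{\phi(x,a_i):i<m\}$ that need not be of the form $\{\phi(x,a,b):a\in M\}$ for a single $b\in M$; the equivalence-relation characterization of the fcp is precisely what converts the violation into a family $\neg\theta(x,y;\bar c)$ depending on $y$ only through its $\theta(\cdot,\cdot;\bar c)$-class, so that sweeping $y$ over the whole model $M$ already meets all of the finitely many classes. Locating and correctly invoking that characterization (and checking the first-order definability used to descend into $M$) is the substantive step; everything else is compactness and elementarity.
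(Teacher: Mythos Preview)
Your proof is correct and follows essentially the same route as the paper: part (1) is immediate, and for part (2) the substantive direction $\dnfcp$ over $M$ $\Rightarrow$ $\nfcp$ is handled by contraposition via Shelah's equivalence-relation characterization of the fcp, then negating the equivalence relation to produce a family that is $k$-consistent but inconsistent. The paper simply writes ``$M$ big enough'' to ensure the parameters $c_k$ live in $M$, whereas you argue more carefully that the existence of $\bar c$ with exactly $N$ classes is expressed by a first-order sentence and therefore already holds in any model $M$; your version is thus slightly tighter, proving the equivalence for every $M\models T$ rather than just sufficiently large ones, but the underlying idea is identical.
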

\begin{proof}
(1) Clear.

(2) Assume that $T$ is stable. Then $\nfcp$ and $\nfcp$ over $M$
are easily seen to be equivalent. Assume that $T$ has fcp, then by
Shelah's $\nfcp$ theorem \cite[Theorem 4.4]{MR1083551} there is
a formula $E(x,y,z)$ such that $E(x,y,c)$ is an equivalence relation
for every $c$ and for each $k\in\omega$ there is $c_{k}$ such that
$E(x,y,c_{k})$ has more than $k$, but finitely many equivalence
classes. Taking $\phi(x,y,z)=\neg E(x,y,z)$ and $M$ big enough we
see that $\left\{ \phi(x,a,c_{k})\,:\, a\in M\right\} $ is $k$-consistent,
but inconsistent.\end{proof}
\begin{lem}
If every formula of the form $\phi(x,y,z)$ with $\left|x\right|=1$
is $\dnfcp$ over $A$, then $T$ is $\dnfcp$ over $A$.\end{lem}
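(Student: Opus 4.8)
The plan is to prove the statement by induction on $|x|$, the base case $|x|=1$ being exactly the hypothesis, so that all the content is in the inductive step. Suppose the conclusion holds for all formulas with $n$ object variables, and let $\phi(x,y,z)$ be given with $|x| = n+1$. Split $x = x'x_n$ with $|x'| = n$ and $|x_n| = 1$; the goal is to produce a bound $k$ that works for $\phi$.

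First I would apply the induction hypothesis to $\phi$ read as a formula with object variable $x'$ and parameter block $(x_n,z)$: this produces $k_2\in\omega$ such that for all $e\in M$ and $b\in M$, if $\{\phi(x',e,a,b):a\in A\}$ is $k_2$-consistent then it is consistent. The key point is then that consistency of $\{\phi(x'x_n,a,b):a\in A\}$ can be tested one coordinate at a time: this set is consistent if and only if the partial type in the single variable $x_n$
\[
\Psi_b(x_n) \;=\; \bigl\{\, \exists x'\ \bigwedge_{a\in A_0}\phi(x',x_n,a,b)\;:\;A_0\subseteq A,\ |A_0|\le k_2 \,\bigr\}
\]
is consistent. (A realization of $\{\phi(x'x_n,a,b):a\in A\}$ projects to a realization of $\Psi_b$; conversely, a realization $e$ of $\Psi_b$ makes $\{\phi(x',e,a,b):a\in A\}$ $k_2$-consistent, hence consistent by the choice of $k_2$, and any witness $x'$ completes $e$ to a realization of the original set.)

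Next I would observe that $\Psi_b$ is exactly the set of instances of the single formula $\eta(x_n,y_0\dots y_{k_2-1},z) := \exists x'\ \bigwedge_{i<k_2}\phi(x',x_n,y_i,z)$, namely $\Psi_b(x_n) = \{\eta(x_n,\bar a,b):\bar a\in A^{k_2}\}$ (allowing repetitions in $\bar a$ to cover $|A_0|<k_2$). Since $\eta$ has a single object variable, the hypothesis of the lemma applies to it and yields $k_3\in\omega$ such that $\Psi_b$ is consistent whenever it is $k_3$-consistent. I then claim $k := k_2\cdot k_3$ works for $\phi$: if $\{\phi(x'x_n,a,b):a\in A\}$ is $k$-consistent, then for any $\bar a_0,\dots,\bar a_{k_3-1}\in A^{k_2}$ the set $\{a_{j,i}:j<k_3,\ i<k_2\}\subseteq A$ has at most $k$ elements, hence a common realization $(c,e)$; taking the inner $x'$ equal to $c$ in each conjunct shows $\models\exists x_n\bigwedge_{j<k_3}\eta(x_n,\bar a_j,b)$, so $\Psi_b$ is $k_3$-consistent, thus consistent, thus $\{\phi(x'x_n,a,b):a\in A\}$ is consistent.

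I do not expect a genuine obstacle here; the only thing to be careful about is the bookkeeping with tuples — one must be comfortable moving the extra object variable $x_n$ into the parameter side of $\phi$ to invoke the induction hypothesis, and then moving a $k_2$-tuple of parameters into the new formula $\eta$ to invoke the hypothesis — together with checking that the final constant $k$ depends only on the fixed formulas $\phi$ and $\eta$, and not on $A$ or $b$.
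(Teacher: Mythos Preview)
Your proof is correct and follows essentially the same induction as the paper's: split off one coordinate, use the inductive hypothesis on the remaining $n$ variables, and use the single-variable hypothesis on a conjunction formula with an existential quantifier over the other block. The only cosmetic difference is that you move the single variable $x_n$ into the parameter slot first and put the $\exists x'$ in the one-variable formula $\eta$, whereas the paper puts the $\exists x_0$ in the $m$-variable formula $\psi$ and applies the one-variable case to the plain formula $\theta$ afterwards; the bookkeeping and the resulting bound $k=k_2k_3$ are the same.
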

\begin{proof}
Assume we have proved that all formulas with $\left|x\right|\leq m$
are $\dnfcp$, and we prove it for $\left|x\right|=m+1$. So assume
that for every $n<\omega$ we have some $c_{n}\in M$ such that $\left\{ \phi(x_{0}...x_{m},a,c_{n})\right\} _{a\in A}$
is $n$-consistent, but not consistent. Let $\psi(x_{1}...x_{m},y_{i}...y_{l},z)=\exists x_{0}\bigwedge_{i\leq l}\phi(x_{0}...x_{m},y_{i},z)$,
of course still $\left\{ \psi(\bar{x},\bar{a},c_{n})\right\} _{\bar{a}\in A}$
is $\lfloor n/l\rfloor$-consistent, so consistent for $n$ large
enough by the inductive assumption. Let $b_{1}...b_{m}$ realize it.
Then consider $\Gamma=\left\{ \theta(x_{0},a,c_{n}b_{1}...b_{m})\right\} _{a\in A}$
where $\theta(x_{0},a,c_{n}b_{1}...b_{m})=\phi(x_{0}b_{1}...b_{m},a,c_{n})$.
It is $l$-consistent. Again by the inductive assumption, if $l$
was chosen large enough, there is some $b_{0}$ realizing $\Gamma$,
but then $b_{0}...b_{m}\models\left\{ \phi(x_{0}...x_{m},a,c_{n})\right\} _{a\in A}$
- a contradiction.\end{proof}
\begin{example}
$DLO$ has dnfcp over models.
\end{example}
The following criterion for boundedness follows from the proof of
\cite{CaZi}. 
\begin{thm}
\label{fac: dnfcp -> boundedness} Let $A\subset M$ be small and
uniformly stably embedded. Assume that $M$ has $\dnfcp$ over $A$.
Then $\left(M,A\right)$ is bounded.
\end{thm}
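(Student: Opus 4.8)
The plan is to follow the stable-case argument of Cabanne--Ziegler \cite{CaZi} (Fact \ref{CZ}(1)), checking that the only place where stability is used is precisely in reducing a type to a $k$-consistent family of instances, which is exactly what $\dnfcp$ over $A$ buys us, while uniform stable embeddedness handles the ``induced structure'' bookkeeping. First I would reduce boundedness to the following elimination statement: for every $L_{\P}$-formula it suffices to eliminate a single block $\exists y\in\P\,\phi(x,y,c)$ down to a formula of the form $\bigvee_j \psi_j(x,c)\wedge (\exists y\in\P)\chi_j(y,c)$ where the $\psi_j,\chi_j$ are $L$-formulas; iterating this over the quantifier prefix and using that $A$ is small (so that types over $A\cup\{\text{finite}\}$ are realized inside $M$) gives boundedness in the usual way. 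So fix $\phi(x,y,z)$ and a parameter $c\in M$, and let $b\in A$ range over the witnesses.

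The core step: consider the partial type $p_b(y) = \{\phi(x,y,c)\}$-reduct data, i.e. for $a\in M$ we want to decide $\exists b\in A\,\phi(a,b,c)$ uniformly. Apply $\dnfcp$ over $A$ to the formula $\phi(x,y,z)$ (with the roles of variables arranged so that the ``$x$'' of the $\dnfcp$ definition is $y$, the parameter set is $A$, and $c$ together with $a$ plays the role of ``$z$''): there is $k=k_\phi$ such that for every parameter, $k$-consistency of the $A$-indexed family of instances implies consistency. Because $A$ is uniformly stably embedded, the trace $\phi(A,a,c)$ is of the form $\chi(A,e)$ for some fixed $L$-formula $\chi(y,w)$ and some $e\in A$ (depending on $a,c$); moreover the relation ``$e$ codes $\phi(A,a,c)$'' is itself $L$-definable over $c$ by a formula $\rho(w;x,z)$, again by uniform stable embeddedness and compactness. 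Now $\exists b\in A\,\phi(a,b,c)$ becomes $\exists w\,\big(\rho(w;a,c)\wedge \exists b\in\P\,\chi(b,w)\big)$: the first conjunct is $L$ over $c$, the second is a $\P$-quantifier over an $L$-formula with parameter in $A$. To eliminate the remaining $\P$-quantifier I pass to the induced structure $A_{\ind}$: $\exists b\in\P\,\chi(b,w)$ is the predicate $D_{\exists b\,\chi(b,w)}$, and one absorbs this by noting $w$ itself is quantified over $A$; finiteness of the needed data (which $L$-formulas $\chi$ can occur) comes from uniform stable embeddedness again, so only finitely many $\chi$ appear for a given $\phi$. The $k_\phi$ from $\dnfcp$ is what lets us replace the existential $\exists b\in\P$ over a possibly-inconsistent infinite family by a finite disjunction over $k$-tuples, keeping everything first-order; this is the exact point where \cite{CaZi} invokes stability and where we substitute $\dnfcp$ over $A$.

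I expect the main obstacle to be the precise bookkeeping in the inductive elimination of the quantifier prefix: after one round of elimination the new ``$L$-part'' has extra parameters $e\in A$ which themselves sit under outer $\P$-quantifiers, and one must check that the collection of $L$-formulas produced stays finite through the induction (this is where uniform stable embeddedness, as opposed to mere stable embeddedness, is essential) and that $\dnfcp$ over $A$ is applied to the right formulas at each stage (formulas in the separated variables $\phi(x,y,z)$, not arbitrary $L_\P$-formulas — so strictly speaking one wants the $\dnfcp$ hypothesis to be inherited by the relevant auxiliary formulas, which it is since they are $L$-formulas). Smallness of $A$ is used only at the very end, to guarantee that the reduced formula, which asserts the existence of elements of $A$ with prescribed $L$-type over a finite tuple, correctly computes truth in $(M,A)$ rather than in some extension. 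I would present the argument by first isolating a one-step elimination lemma combining $\dnfcp$ over $A$ with uniform stable embeddedness, then running the standard induction on quantifier complexity.
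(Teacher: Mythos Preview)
The paper gives no proof of this theorem at all: it simply records that the statement ``follows from the proof of \cite{CaZi}'' (Casanovas--Ziegler, not Cabanne--Ziegler). So your high-level plan---rerun the Casanovas--Ziegler argument, substituting $\dnfcp$ over $A$ for the appeal to stability/$\nfcp$, and using uniform stable embeddedness where CaZi gets definability of types over $A$ for free from stability---is precisely what the paper intends, and at that level you match.

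Your detailed sketch, however, misidentifies both the shape of the CaZi argument and the role of $\dnfcp$. The CaZi proof is a back-and-forth, not a syntactic quantifier elimination: in a saturated $(M',A')\succ(M,A)$ one shows that $A'_{[a]}\equiv A'_{[a']}$ implies $\tp_{L_\P}(a)=\tp_{L_\P}(a')$. The nontrivial step is, given $b\in M'\setminus A'$, to find $b'$ with $A'_{[ab]}\equiv A'_{[a'b']}$. Uniform stable embeddedness makes $\tp_L(b/A'a)$ definable over $ca$ for some finite $c\subset A'$; one transfers the defining schema to $c'a'$ and must check that the resulting family $\{\phi(y,d,a')\leftrightarrow d_\phi(d,c'):d\in A'\}$ is consistent. \emph{That} is where $\dnfcp$ enters: it says consistency of such an $A'$-indexed family is equivalent to $k_\phi$-consistency, which is a bounded formula in $a',c'$, hence transfers from $(a,c)$ to $(a',c')$. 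Smallness then realizes the type in $M'$. (Compare the paper's own Lemma~\ref{prop: toward boundedness in distal}, which plays exactly this role for $\dnfcp'$ in the distal case.)

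By contrast, your ``core step'' proposes to rewrite $\exists y\in\P\,\phi(x,y,c)$---but that formula is already bounded, so there is nothing to eliminate; the genuine difficulty is the \emph{unrestricted} existential witnessing $b'\in M$ in the back-and-forth. Your description of $\dnfcp$ as letting one ``replace the existential $\exists b\in\P$ over a possibly-inconsistent infinite family by a finite disjunction over $k$-tuples'' is not what $\dnfcp$ does; it reduces consistency of an $A$-indexed family of formulas (in a new variable) to a first-order $k$-consistency condition. If you reorganize around the back-and-forth and use $\dnfcp$ only at the consistency-check step, the argument goes through cleanly.
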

The problem with $\dnfcp$ is that it does not seem possible to conclude
$\dnfcp$ over $A$ from properties of the induced structure on $A$.
To remedy this, we introduce a weaker variant with separated variables.
\begin{defn}
We say that $M$ satisfies $\dnfcp'$ over $A\subseteq M$ if for
any $\phi(x,y)$ and $\psi(y,z)$, there is $k<\omega$ such that
for any $b\in M$, if $\{\phi(x,a):a\in\psi(A,b)\}$ is $k$-consistent,
then it is consistent. We say that $T$ has $\dnfcp'$ if for any
$M\prec N$, $N$ has $\dnfcp'$ over $M$.\end{defn}
\begin{rem}
Let $(M,A)$ be a pair, and assume that $A$ is small and $A_{\ind}$
is saturated. Then if formulas are bounded, $M$ has $\dnfcp'$ over
$A$. \end{rem}
\begin{proof}
By assumption $\exists y\forall a\in\P,\psi(a;z)\rightarrow\phi(a;y)$
is equivalent to a bounded formula $\theta(z)$, for any $\phi$ and
$\psi$. If $\dnfcp'$ does not hold, then there is a consistent bounded
type satisfying $\neg\theta(z)$ and for all $n$, $\forall a_{1},...,a_{n}\in\P\exists y,\bigwedge\psi(a_{i};z)\rightarrow\phi(a_{i};y)$.
As $A_{ind}$ is saturated, it is resplendent, and we can find a type
over $A$ which satisfies this bounded type. By smallness of $A$
in $M$, this type is realized by some $c\in M$. Then again by smallness,
there is $b\in M$ such that $\psi(a;c)\rightarrow\phi(a;b)$ for
all $a\in A$. This contradicts the hypothesis on $\theta$.
\end{proof}
We can now prove some preservation result.
\begin{lem}
\label{lem:dnfcp goes up}Let $T$ be NIP, $A\subseteq M\models T$
and assume that $Th\left(A_{\ind(L_{P})}\right)$ has $\dnfcp'$.
Then $M$ has $\dnfcp'$ over $A$.\end{lem}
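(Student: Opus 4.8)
The goal is to show that $\dnfcp'$ transfers from the induced structure $A_{\ind(L_P)}$ to the pair $(M,A)$. The plan is to argue by contradiction: suppose $M$ fails $\dnfcp'$ over $A$, witnessed by formulas $\phi(x,y)$ and $\psi(y,z)$, so that for every $k$ there is $b_k\in M$ with $\{\phi(x,a):a\in\psi(A,b_k)\}$ being $k$-consistent but inconsistent. The difficulty is that the parameter $b_k$ lives in $M$, not in $A$, whereas $\dnfcp'$ of $A_{\ind(L_P)}$ only speaks about parameters coming from $A$ (equivalently, from formulas of the induced structure). So the first task is to reduce the role of $b_k$ to data visible inside $A_{\ind(L_P)}$. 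For this I would use that $T$ is NIP: by Theorem \ref{thm: Uniform honest definitions} (uniform honest definitions), the externally definable subset $\psi(A,b_k)$ of $A$ can be uniformly approximated from inside by a formula $\chi(y,t)$ with parameters $t\in A$ — more precisely, for every finite $A_0\subseteq\psi(A,b_k)$ there is $t_k\in A$ with $A_0\subseteq\chi(A,t_k)\subseteq\psi(A,b_k)$, and $\chi$ depends only on $\psi$. The formula $\chi(y,t)$ together with the induced predicate lives in the language of $A_{\ind(L_P)}$ (it is an $L$-formula with parameters in $A$).

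Next I would convert the $k$-consistency of $\{\phi(x,a):a\in\psi(A,b_k)\}$ into $k$-consistency of a family of instances of $\phi$ over parameters lying in a $\chi$-definable subset of $A$. Since $\chi(A,t_k)\subseteq\psi(A,b_k)$, any finite subfamily of $\{\phi(x,a):a\in\chi(A,t_k)\}$ is a finite subfamily of the original family, hence $k$-consistent. For the converse direction I need inconsistency: if $\{\phi(x,a):a\in\psi(A,b_k)\}$ is inconsistent then, by compactness, already some finite subfamily $\{\phi(x,a):a\in A_0\}$ with $A_0\subseteq\psi(A,b_k)$ finite is inconsistent; choosing $t_k\in A$ with $A_0\subseteq\chi(A,t_k)$ ensures $\{\phi(x,a):a\in\chi(A,t_k)\}$ is already inconsistent. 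Thus for every $k$ we obtain $t_k\in A$ such that $\{\phi(x,a):a\in\chi(A,t_k)\}$ is $k$-consistent but inconsistent — this is precisely a failure of $\dnfcp'$ for the pair $(\phi,\chi)$, but now with parameter $t_k$ in $A$.

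The last step is to read this failure inside $A_{\ind(L_P)}$. The statement ``$\{\phi(x,a):a\in\chi(A,t)\}$ is $k$-consistent'' is, for fixed $k$, expressible by an $L$-formula in the variable $t$ evaluated on the predicate $\P$ — namely $\forall a_1,\dots,a_k\,(\bigwedge_i(\P(a_i)\wedge\chi(a_i,t))\to\exists x\bigwedge_i\phi(x,a_i))$ — and likewise inconsistency of the whole family is a (type-)condition on $t$ in $A_{\ind(L_P)}$. Since $k$-consistency is a property of $\phi$ restricted to parameters in $A$, and consistency of $\{\phi(x,a):a\in A'\}$ for $A'\subseteq A$ can be detected inside $A_{\ind(L_P)}$ (it holds in $M$ iff it holds after expanding $A$ to a sufficiently saturated elementary extension of $A_{\ind(L_P)}$, by smallness of $A$ in $M$ as in the remark preceding this lemma), the sequence $(t_k)$ witnesses that $\Th(A_{\ind(L_P)})$ fails $\dnfcp'$ for the pair of formulas $(\phi(x,y),\chi(y,t))$ — contradiction. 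The main obstacle is the second step: making precise that inconsistency (an infinitary condition) is faithfully reflected between $M$ and the induced structure, which is where smallness of $A$ and the fact that $\P$ names $A$ exactly are used; once that bookkeeping is set up, the rest is a routine unwinding of definitions together with the uniform honest definition input.
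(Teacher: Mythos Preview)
Your reduction of the external parameter $b$ to a parameter $t_k\in A$ via a uniform honest definition (your $\chi(y,t)$) for $\psi$ is correct and matches what the paper does with its $\theta_\psi$. The gap is in the last step, where you try to read the failure of $\dnfcp'$ for the pair $(\phi,\chi)$ inside $Th(A_{\ind(L_P)})$. For that you would need $\{D_\phi(x,a):a\in D_\chi(A,t_k)\}$ to be $k$-consistent \emph{in the induced structure}, i.e.\ with the witness $x$ ranging over $A$ (or an elementary extension of $A_{\ind(L_P)}$). What you actually have is $k$-consistency in $M$, with witnesses $x\in M$; there is no reason these lie in $A$, and an elementary extension of $A_{\ind(L_P)}$ is not a model of $T$, so it cannot be compared with $M$ the way you suggest. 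Your appeal to smallness does not help: smallness of $A$ is not a hypothesis of the lemma, and in any case smallness concerns realizing types over $A$ inside $M$, not inside $A$.

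The paper resolves this by internalizing the witness $x$ as well, via a \emph{second} uniform honest definition $\theta_\phi(y,s)$, this time for $\phi$: whenever $d\models\bigwedge_{i<k}\phi(x,a_i)$ with $a_i\in A$, there is $c\in A$ with $\{a_i\}\subseteq\theta_\phi(A,c)\subseteq\phi(d,A)$. One then introduces the $L_\P$-formula $\exists d\,\forall y\in\P\,(\theta_\phi(y,s)\to\phi(d,y))$ in the single variable $s$, and applies the $\dnfcp'$ hypothesis to the pair consisting of $\theta_\phi(y,s)$ conjoined with that formula, against $\theta_\psi(y,t)$. Now the sought variable is $s$, which genuinely ranges over $A$: $k$-consistency holds with witness $c\in A$ as above, and consistency in $A_{\ind(L_P)}$ unwinds via the $L_\P$-formula to produce a $d$ in an elementary extension of $M$ realizing any finite piece of the original family. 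The key idea you are missing is to package the existence of the external realization $d\in M$ into an $L_\P$-predicate on a parameter $s\in A$.
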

\begin{proof}
Let $\phi(x,y)$ and $\psi(y,b)$ be given. Let $\theta_{\phi}(y,s)$
be a uniform honest definition for $\phi$ and $\theta_{\psi}(y,t)$
a uniform honest definition for $\psi$ (by Theorem \ref{thm: Uniform honest definitions}).
Let $\left(M',A'\right)\succ\left(M,A\right)$ be a sufficiently saturated
elementary extension, then naturally $A'_{ind(L_{P})}\succ A_{ind(L_{P})}$.
There is $c_{\psi}\in A'$ such that $\psi(A,b)=\theta_{\psi}(A,c_{\psi})$.

Let $\chi(s)$ be the formula $\exists d\forall y\in P\theta_{\phi}(y,s)\rightarrow\phi(d,y)$
and let $k\in\omega$ be as given for $\theta_{\phi}(y,s)\land\chi(s)$,
$\theta_{\psi}(y,t)$ by $\dnfcp'$ of $A_{ind(L_{P})}$ for it. Assume
that $\left\{ \phi(x,a)\,:\, a\in\psi(A,b)\right\} $ is $k$-consistent,
then $\left\{ \theta_{\phi}(a,s)\land\chi(s)\,:\, a\in\theta_{\psi}(A,c_{\psi})\right\} $
is $k$-consistent (let $d\models\left\{ \phi(x,a_{i})\right\} _{i<k}$,
and choose $c_{\phi}\in A$ such that $\left\{ a_{i}\right\} _{i<k}\subseteq\theta_{\phi}(A,c_{\phi})\subseteq\phi(d,A)$).
As $A_{ind(L_{P})}$ is $\dnfcp'$, we conclude that it is consistent.
In particular, for any $n\in\omega$ and $a_{0},...,a_{n}\in\theta_{\psi}(A,c_{\psi})=\psi(A,b)$,
there is $c_{\phi}\in A$ such that $\bigwedge_{i<n}\theta_{\phi}(a_{i},c_{\phi})\land\chi(c_{\phi})$,
thus unwinding there is some $d\models\left\{ \phi(x,a_{i})\right\} _{i<n}$.
\end{proof}

\subsection{Boundedness of the pair for distal theories}

We now aim at giving an analog of Theorem \ref{CZ} for distal theories
and stably embedded predicates.

First, we improve Lemma \ref{lem:dnfcp goes up}.
\begin{lem}
\label{lem:dnfcp goes up2}Let $T$ be distal, $A\subseteq M\models T$
and assume that $Th\left(A_{\ind(L)}\right)$ has $\dnfcp'$. Then
$M$ has $\dnfcp'$ over $A$.\end{lem}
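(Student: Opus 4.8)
The plan is to mimic the proof of Lemma \ref{lem:dnfcp goes up}, but replace the uniform honest definitions coming from Theorem \ref{thm: Uniform honest definitions} with the \emph{strong} honest definitions available in distal theories via Theorem \ref{thm: distal =00003D uniform strong honest definitions}. The point of the improvement is that we want to hypothesize $\dnfcp'$ only for $A_{\ind(L)}$ — the induced structure in the pure language $L$, which in particular does not see the predicate $\P$ — rather than for $A_{\ind(L_P)}$. The reason this is possible is that a strong honest definition $\theta(x,b)$ for $\phi$ over $A$ not only approximates $\phi(A,a)$ from inside but actually \emph{isolates} $\tp_\phi(a/A)$, so one does not need to carry the auxiliary formula $\chi(s)$ encoding ``$\theta_\phi(y,s)$ cuts out a subset of $\phi(d,A)$'' as a separate $\P$-formula; distality lets us fold that condition into an $L$-formula.

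Concretely, given $\phi(x,y)$ and $\psi(y,b)$, let $\theta_\phi(x,z)$ and $\theta_\psi(y,t)$ be the uniform strong honest definitions provided by Theorem \ref{thm: distal =00003D uniform strong honest definitions}(2) for $\phi$ and $\psi$ respectively, and pass to a sufficiently saturated $(M',A') \succ (M,A)$, so $A'_{\ind(L)} \succ A_{\ind(L)}$. Choose $c_\psi \in A'$ with $\psi(A,b) = \theta_\psi(A,c_\psi)$. Now I would take the $L$-formula expressing ``$\theta_\phi(\cdot, z)$ strongly defines some $\phi$-type'', i.e. a formula $\rho(z) := \exists x\, \forall y\, \forall y'\, \big[ (\theta_\phi(x,z) \wedge \ldots) \big]$ saying that $\{ y : \theta_\phi(x,z) \}$ (for the witnessing $x$) is consistent with $\phi(x', -)$ in the appropriate coherent sense — using exactly the coherence built into the strong honest definition, so that $\theta_\phi(a,z) \wedge \rho(z)$ forces $a$ into a set of the form $\phi(d,A)$ for some $d$. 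Crucially $\rho(z)$ is an $L$-formula, not an $L_P$-formula. Then let $k < \omega$ be supplied by $\dnfcp'$ of $A_{\ind(L)}$ applied to the pair of $L$-formulas $\theta_\phi(y,z) \wedge \rho(z)$ (in variable $y$, parameter $z$) and $\theta_\psi(y,t)$.

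The verification then runs as in Lemma \ref{lem:dnfcp goes up}: if $\{\phi(x,a) : a \in \psi(A,b)\}$ is $k$-consistent, pick $d$ realizing $k$ of them, use the strong honest definition of $\phi$ to get $c_\phi \in A$ with $\{a_i\}_{i<k} \subseteq \theta_\phi(A,c_\phi)$ and $\theta_\phi(x,c_\phi) \vdash \tp_\phi(d/A)$, hence $\rho(c_\phi)$ holds and $\{\theta_\phi(a,z) \wedge \rho(z) : a \in \theta_\psi(A,c_\psi)\}$ is $k$-consistent in $A_{\ind(L)}$. By $\dnfcp'$ of $A_{\ind(L)}$ it is consistent, so for every $n$ and $a_0,\dots,a_n \in \theta_\psi(A,c_\psi) = \psi(A,b)$ there is $c_\phi \in A$ with $\bigwedge_{i \le n} \theta_\phi(a_i,c_\phi) \wedge \rho(c_\phi)$; unwinding $\rho$ gives $d$ with $\theta_\phi(x,c_\phi) \vdash \tp_\phi(d/A)$, hence $d \models \bigwedge_{i \le n} \phi(x,a_i)$. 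Thus $\{\phi(x,a) : a \in \psi(A,b)\}$ is consistent, and since $k$ was chosen depending only on $\phi$ and $\psi$, $M$ has $\dnfcp'$ over $A$.

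The main obstacle I expect is getting the formula $\rho(z)$ right so that it is genuinely an $L$-formula while still capturing ``$\theta_\phi(\cdot,z)$ isolates a $\phi$-type that is realized inside $\phi(d,A)$ for some $d$.'' In Lemma \ref{lem:dnfcp goes up} the analogous condition $\chi(s) = \exists d\, \forall y \in \P\, (\theta_\phi(y,s) \to \phi(d,y))$ is manifestly an $L_P$-formula because of the bounded quantifier $\forall y \in \P$; here the gain from distality is precisely that, because $\theta_\phi(x,z) \vdash \tp_\phi(d/A)$ on the nose (not merely $\theta_\phi(A,z) \subseteq \phi(d,A)$), the relevant containment is witnessed by an \emph{implication of $L$-formulas} $\forall y\, (\theta_\phi(y,z) \to \phi(d,y))$ which one can existentially quantify over $d$ without reference to $\P$ — the coherence/special form noted in the remark after Theorem \ref{thm: distal =00003D uniform strong honest definitions} and in \cite[Section 7.1]{VCdensity_50people} is what makes this clean. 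Once that formula is correctly isolated, the rest is a routine adaptation.
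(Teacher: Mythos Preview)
Your approach is essentially the paper's: the proof there is a single line saying to follow Lemma~\ref{lem:dnfcp goes up} verbatim but redefine $\chi(s)$ as the $L$-formula $\exists d\,\forall y\,(\theta_\phi(y,s)\to\phi(d,y))$, dropping the bounded quantifier $\forall y\in\P$ --- exactly the formula you isolate in your final paragraph. Your detour through a vaguely specified $\rho(z)$ is unnecessary; the whole content is that one $L$-formula.

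That said, your write-up conflates two different objects. You introduce $\theta_\phi(x,z)$ as the strong honest definition of Theorem~\ref{thm: distal =00003D uniform strong honest definitions}(2), which lives in the $x$-variable and satisfies $\theta_\phi(d,c)\wedge\big(\theta_\phi(x,c)\vdash\tp_\phi(d/C)\big)$; but you then write $\{a_i\}\subseteq\theta_\phi(A,c_\phi)$ and $\forall y\,(\theta_\phi(y,z)\to\phi(d,y))$, which treat $\theta_\phi$ as a formula in the $y$-variable. These are not the same formula, and the implication you claim between ``$\theta_\phi(x,c)\vdash\tp_\phi(d/A)$'' and ``$\forall y\,(\theta_\phi(y,c)\to\phi(d,y))$'' is a non-sequitur as written. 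The clean fix is to let $\theta(x,z)$ be the strong honest definition and then \emph{define} $\theta_\phi(y,s):=\forall x\,(\theta(x,s)\to\phi(x,y))$; this is the $L$-formula that plays the role of the honest definition from Lemma~\ref{lem:dnfcp goes up}. Given $d\models\{\phi(x,a_i)\}_{i<k}$, the strong honest definition yields $c_\phi\in A$ with $\theta(d,c_\phi)$ and $\theta(x,c_\phi)\vdash\phi(x,a_i)$ for each $i$, so $\theta_\phi(a_i,c_\phi)$ holds; and since $\theta(d,c_\phi)$, any $y$ with $\theta_\phi(y,c_\phi)$ satisfies $\phi(d,y)$, so $\chi(c_\phi)$ holds with witness $d$. (Note too that for $\psi$ you only need the ordinary honest definition, not the strong one.) With this bookkeeping straightened out, your verification goes through and matches the paper.
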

\begin{proof}
Follow the proof of Lemma \ref{lem:dnfcp goes up}, except that we
define $\chi(s)$ as $\exists x\forall y\theta_{\phi}(y,s)\rightarrow\phi(d,y)$,
which we can by strong honest definitions (Lemma \ref{thm: distal =00003D uniform strong honest definitions}).
\end{proof}
Let $A_{0}$ be a small subset of $M_{0}$, and take a $|T|^{+}$-saturated
$\left(M,A\right)\succ\left(M_{0},A_{0}\right)$.
\begin{lem}
\label{prop: toward boundedness in distal} Assume that $T$ is distal
and $M$ has $\dnfcp'$ over $A$. Let $a\in M$,$\zeta(x,y)\in L$
and $q(y)\in S(A)$ be an $a$-definable type. Then the following
are equivalent:
\begin{enumerate}
\item There is $b\models q$ in $\M$ such that $\models\zeta(a,b)$.
\item There is $b\models q$ in $M$ such that $\models\zeta(a,b)$.
\end{enumerate}
\end{lem}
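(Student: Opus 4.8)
The implication (2)$\Rightarrow$(1) is trivial since $M\prec\M$. For (1)$\Rightarrow$(2) fix a witness $b^{*}\in\M$ with $b^{*}\models q$ and $\models\zeta(a,b^{*})$; the task is to produce such an element inside $M$. The plan is to replace the type $q(y)\cup\{\zeta(a,y)\}$, whose parameters lie in $M$ but whose size is a priori $|A|$, by a partial type $\pi(y)$ over a subset of $M$ of size at most $|T|$ with $\pi\vdash q(y)\cup\{\zeta(a,y)\}$; once this is done, the $|T|^{+}$-saturation of $M$ (as an $L$-structure) yields a realization of $\pi$ in $M$, which is the required $b$.

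To build $\pi$ I would use distality together with the hypothesis that $M$ has $\dnfcp'$ over $A$. Since $q$ is $a$-definable it is determined by its $L$-schema of definitions over the single parameter $a\in M$: for each $L$-formula $\phi(y,z)$ the set $S_{\phi}=\{c\in A:\phi(y,c)\in q\}$ is externally definable in $A$, the external parameter being $a$. The idea is to apply the strong honest definitions available in distal theories (Proposition \ref{prop: isolated extension}, in the form uniform over finite parameter sets) to each relevant $\phi$, so as to isolate, by a single $L$-formula with a parameter that can be taken in $A\subseteq M$ (using the $|T|^{+}$-saturation of the pair $(M,A)$), the condition ``$y$ has the same $\phi$-type over $A$ as $b^{*}$''. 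The exactness of strong honest definitions --- isolation of a type rather than mere approximation of a set as in Fact \ref{fac: honest definitions exist} --- is what makes the resulting formula imply all of the corresponding part of $q$, not just something compatible with it. Running over the finitely many $L$-formulas occurring and adding the single formula $\zeta(a,y)$, one obtains $\pi(y)$, a partial type over $A\cup\{a\}$ of size $\le|T|$ with $\pi\vdash q(y)\cup\{\zeta(a,y)\}$.

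It remains to see that $\pi$ is consistent, and this is where $\dnfcp'$ over $A$ is used: for each relevant $\phi$ it reduces the consistency of the corresponding $A$-indexed family of $L$-instances to $k$-consistency for a fixed $k$, and $k$-consistency (indeed full consistency) of all these families simultaneously together with $\zeta(a,y)$ is witnessed by $b^{*}$. Hence $\pi$ is consistent, and we conclude by saturation of $M$ as above. The main obstacle is the second paragraph: obtaining the honest-definition witnesses \emph{inside} $M$ rather than only in a larger saturated pair, and with the chosen internal formula capturing the external set $S_{\phi}$ exactly so that $\pi$ genuinely implies $q$; this is precisely where distality is needed in full strength, and it must be combined carefully with the saturation of $(M,A)$ and with $\dnfcp'$ over $A$ (the latter being stated only for $L$-formulas with extra $M$-parameters in the set-defining formula, so that the single formula $\zeta(a,y)$ has to be threaded through with some care).
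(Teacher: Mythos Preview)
Your plan hinges on finding, for each $\phi$, a parameter $c_{\phi}\in A$ such that $\theta_{\phi}(y,c_{\phi})$ holds of $b^{*}$ and $\theta_{\phi}(y,c_{\phi})\vdash q|_{\phi}$, extracted from the strong honest definition via $|T|^{+}$-saturation of the pair. This step fails in general. Take $T=DLO$ (distal, with $\dnfcp'$ over models by the example in the paper), $A\prec M$, $a\in M\setminus A$, and $q=\tp(a/A)$, which is $a$-definable. Then $q|_{<}$ is the cut of $a$ in $A$, and by quantifier elimination no $L$-formula with parameters in $A$ isolates that cut. The uniform version of strong honest definitions (Theorem~\ref{thm: distal =00003D uniform strong honest definitions}) only isolates $\tp_{\phi}(b^{*}/C)$ for \emph{finite} $C\subseteq A$, and $|T|^{+}$-saturation of $(M,A)$ cannot upgrade this to all of $A$: the desired condition ``$\exists t\in\P\,\forall z\in\P\,\forall y\,[\theta(y,t)\to(\phi(y,z)\leftrightarrow d_{\phi}(z,a))]$'' is a single $L_{\P}$-formula, not a small type, so finite satisfiability in $\P$ says nothing about its truth. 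Your appeal to $\dnfcp'$ in the third paragraph is also misplaced: it would at best yield \emph{consistency} of the $A$-indexed family (already witnessed by $b^{*}$), not a realization in $A$, and certainly not that a formula over $A$ implies $q|_{\phi}$.

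The paper's argument avoids trying to isolate $q|_{\phi}$ altogether and instead exploits the ambient setup stated just before the lemma: $(M,A)$ is a $|T|^{+}$-saturated elementary extension of a pair $(M_{0},A_{0})$ with $A_{0}$ \emph{small} in $M_{0}$. After the same reduction you make (via $L_{\P}$-saturation and $a$-definability) to a single $\phi$, the proof packages the hypothesis into two bounded $L_{\P}$-formulas $\psi_{1}(a),\psi_{2}(a)$: $\dnfcp'$ is used only to make ``the $(\phi,\theta)$-type defined by $a$ over $\P$ is consistent'' first-order ($\psi_{1}$), while $\psi_{2}$ records that every $\theta$-instance in $q$ is jointly realizable with $\zeta(a,y)$. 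The actual work is carried out down in $(M_{0},A_{0})$: for any $a_{0}$ satisfying $\psi_{1}\land\psi_{2}$, the \emph{finite}-set strong honest definition together with $\psi_{2}$ shows $q_{a_{0}}|_{\phi}\cup\{\zeta(a_{0},y)\}$ is finitely satisfiable, and smallness of $A_{0}$ then gives a realization in $M_{0}$. Since the conclusion ``$\exists y\,[\forall z\in\P(\phi(y,z)\leftrightarrow d_{\phi}(z,x))\land\zeta(x,y)]$'' is itself bounded, the implication $\psi_{1}\land\psi_{2}\to(\exists y\ldots)$ transfers back to $(M,A)$. What you are missing is precisely this detour through the small pair and the use of smallness there; there is no way to close the argument working only inside the saturated $(M,A)$.
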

\begin{proof}
By $L_{\P}$-saturation of $(M,A)$ and definability of $q(y)$ over
$a$, it is enough to find such a $b$ realizing the $\phi(y,z)$-part
of $q(y)$. Assume that it is definable by $d_{\phi}(z,a)$. Let $\theta(y,t)$
be given by Proposition \ref{prop: isolated extension} for $\phi$,
and let $d_{\theta}(t,a)$ define the $\theta$-part of $q$. By $\dnfcp'$,
the fact that $d_{\phi}(z,a),d_{\theta}(t,a)$ define a consistent
$\phi,\theta$-type $q_{a}$ over $\P$ is expressible by a bounded
formula $\psi_{1}(a)$ saying: 

\[
\forall z_{1}...z_{n}\in\P\forall t_{1}...t_{n}\in\P\,\exists y\left(\bigwedge_{i\leq n}\phi(y,z)\leftrightarrow d_{\phi}(z,a)\,\land\,\bigwedge_{i\leq n}\theta(y,t)\leftrightarrow d_{\theta}(t,a)\right),
\]

where $n$ is given by $\dnfcp'$ for $\phi'(y,z_{1}z_{2}t_{1}t_{2})=\phi(y,z_{1})\land\neg\phi(y,z_{2})\land\theta(y,t_{1})\land\neg\theta(y,t_{2})$
and $\psi'(z_{1}z_{2}t_{1}t_{2},\alpha)=d_{\phi}(z_{1},\alpha)\land\neg d_{\phi}(z_{2},\alpha)\land d_{\theta}(t_{1},\alpha)\land\neg d_{\theta}(t_{2},\alpha)$.

Observe that for any $d\in d_{\theta}(A,a)$, $M\models\exists b\,\theta(b,d)\land\zeta(a,b)$
(as $q(y)\land\zeta(a,y)$ is consistent). It can be expressed by
a bounded formula $\psi_{2}(a)$.

Let $a_{0}\in M_{0}$ be such that $\left(M_{0},A_{0}\right)\models\psi_{1}(a_{0})\land\psi_{2}(a_{0})$.
Assume that there is a finite $C\subseteq A_{0}$ such that $q_{a_{0}}(y)|_{C}\land\zeta(a_{0},y)$
is inconsistent. Let $d\in d_{\theta}(A_{0},a_{0})$ be as given by
Theorem \ref{thm: distal =00003D uniform strong honest definitions}.
Then find some $b\in M_{0}$ such that $\models\theta(b,d)\land\zeta(a_{0},b)$
(by $\psi_{2}(a_{0})$). By the hypothesis on $\theta$, we have $b\models q_{a_{0}}|C$
-- a contradiction. 

So $q_{a_{0}}(y)\land\zeta(a_{0},y)$ is consistent, and it follows
by smallness of $A_{0}$ in $M_{0}$ that $\left(M_{0},A_{0}\right)\models\forall x\,\psi_{1}(x)\land\psi_{2}(x)\rightarrow\exists b\models q_{x}(y)\land\zeta(x,y)$.
It follows that $\left(M,A\right)$ satisfies the same sentence, and
unwinding we conclude.\end{proof}
\begin{thm}
Let $T$ be distal, $A\subseteq M$ is small and (uniformly) stably
embedded, and $A_{\ind}$ has $\dnfcp'$. Then $T_{\P}$ is bounded.\end{thm}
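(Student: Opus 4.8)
The plan is to reduce boundedness of $T_{\P}$ to an instance of Lemma \ref{prop: toward boundedness in distal}, following the template of the stable argument from \cite{CaZi} (our Theorem \ref{fac: dnfcp -> boundedness}), but with uniform strong honest definitions (Theorem \ref{thm: distal =00003D uniform strong honest definitions}) standing in for definability of types. First I would pass to a $|T|^{+}$-saturated pair $(M,A) \succ (M_0,A_0)$, so that Lemma \ref{lem:dnfcp goes up2} gives $\dnfcp'$ over $A$ from $\dnfcp'$ of $A_{\ind}$; it suffices to prove boundedness for this fixed large pair, since boundedness is an elementary property (each $L_\P$-formula being equivalent to a bounded one is expressed by a scheme of $L_\P$-sentences, so it transfers down and up). The goal is then: for every $L_\P$-formula $\delta(x)$ (say $\delta(x) = \exists y \in \P\, \rho(x,y)$ with $\rho$ a lower-complexity $L_\P$-formula — by induction on $\P$-quantifier complexity it is enough to handle one existential quantifier over $\P$ in front of a bounded formula), the set $\delta(M)$ is a bounded (i.e.\ $\P$-quantifier-free-in-the-right-sense) definable set.

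The key step is the following. Fix an $L$-formula $\zeta(x,y)$; I want to show that $\{a \in M : \exists b \in A,\ \models \zeta(a,b)\}$ is a bounded definable set, and more generally that the "trace on $A$" of $\zeta(a,y)$ is uniformly controlled. Using that $A$ is uniformly stably embedded, for the formula $\zeta$ there is an $L$-formula $\psi(y,z)$ such that for every $a \in M$ there is a parameter $c \in A$ with $\zeta(a,A) = \psi(A,c)$; equivalently, the induced trace is always a $\psi$-definable subset of $A$. Now here is where distality enters: by Theorem \ref{thm: distal =00003D uniform strong honest definitions} applied to $\zeta$ there is $\theta(x,z)$ so that for any $a$ and any finite $C \subseteq M$ there is $b \in $ (the relevant set) with $\models \theta(a,b)$ and $\theta(x,b) \vdash \tp_\zeta(a/C)$ — I would use the strong honest definition to replace the "$q$ is a definable type" hypothesis of Lemma \ref{prop: toward boundedness in distal} by a \emph{uniformly} chosen family: the $\theta$-part of the trace type is always cut out by a fixed formula with parameter the honest-definition parameter $b$, which is approximately in $A$. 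Concretely, I would run the argument of Lemma \ref{prop: toward boundedness in distal} not over a single pre-chosen definable type $q$, but letting the defining parameter range: the bounded formulas $\psi_1(a), \psi_2(a)$ there are built from $d_\phi, d_\theta$, and by uniform (strong) honest definitions these can be taken as $\theta_\zeta(y,s) \wedge \chi(s)$ etc.\ with $s$ an $A$-variable, exactly as in the proof of Lemma \ref{lem:dnfcp goes up}. The upshot: $\dnfcp'$ over $A$ plus the uniform strong honest definition shows that consistency of the trace type together with $\zeta(a,y)$ over all of $A$ follows from consistency over a fixed finite piece, and the latter is a bounded condition on $a$; hence "$\exists b \in A\, (\models\zeta(a,b))$" is equivalent, over the large pair, to a bounded formula.

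From that uniform reduction I would conclude by induction on $\P$-quantifier complexity: given $\delta(x) = \exists y \in \P\, \rho(x,y)$ with $\rho$ already known to be equivalent to a bounded formula $\bigvee Q_i\bar y'_i \in \P\, \sigma_i(x,y,\bar y'_i)$, one can push the outer $\exists y \in \P$ inside using the case just treated (with $\zeta$ now an $L$-formula in $x\bar y'$ and $y$, applied after naming finitely many $\P$-elements), so that $\delta(x)$ becomes a Boolean combination of bounded formulas, i.e.\ bounded. The subtlety to be careful with is that the honest-definition parameter $b$ given by Theorem \ref{thm: distal =00003D uniform strong honest definitions} lives in $A$ itself (strong honest definitions for finite $C \subseteq A$ produce $b \in A$), so we really do stay inside the pair and do not need to climb to $A'$ as in the general honest-definition fact; this is what makes the "$\exists y \in \P$" genuinely eliminable rather than merely approximable.

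\textbf{Main obstacle.} I expect the delicate point to be assembling the bounded formulas $\psi_1, \psi_2$ \emph{uniformly} in the defining parameter — i.e.\ verifying that when we let the honest-definition parameter vary (rather than fixing a single definable type $q$ as in Lemma \ref{prop: toward boundedness in distal}), the resulting conditions are still expressible with a \emph{bounded} formula, with the $\dnfcp'$-bound $n$ chosen once and for all from $\phi, \theta, \psi$. This is exactly the interaction between uniform strong honest definitions, uniform stable embeddedness (to control $\psi$), and $\dnfcp'$ (to get a single $n$), and making all three uniformities compatible — so that no parameter-dependent choice sneaks an unbounded quantifier back in — is the crux; everything else is bookkeeping over $\P$-quantifier complexity.
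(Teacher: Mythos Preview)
Your proposal rests on a misreading of ``bounded''. A $\P$-bounded formula is one equivalent to $Q_1 y_1 \in \P \ldots Q_n y_n \in \P\, \phi(x,\bar y)$ with $\phi \in L$: all quantifiers are \emph{restricted to $\P$} and the matrix is pure $L$. It does \emph{not} mean ``$\P$-quantifier-free-in-the-right-sense''. Consequently your ``key step'' --- showing that $\{a : \exists b \in A,\ \models \zeta(a,b)\}$ is bounded-definable for $\zeta \in L$ --- is vacuous: that set is defined by $\exists y \in \P\, \zeta(x,y)$, which is bounded by definition. Your induction is on the wrong complexity measure: the class of bounded formulas is trivially closed under $\P$-quantification and Boolean combinations; what must be shown is closure under \emph{unrestricted} quantification $\exists y$, and your outline never addresses that step.

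The paper does not attempt a syntactic induction. Instead it uses the back-and-forth criterion for boundedness: in a $|T|^+$-saturated pair, show that $A_{[a]} \equiv A_{[a']}$ implies $\tp_{L_\P}(a) = \tp_{L_\P}(a')$. The back-and-forth has two cases for the new element $b$. If $b \in A$, saturation and $A_{[a]}\equiv A_{[a']}$ immediately give a matching $b'\in A$. If $b \in M\setminus A$, one first uses uniform stable embeddedness (together with Case~1 to absorb the needed parameters from $A$) to arrange that $\tp_L(ab/A)$ is $a$-definable; then Lemma \ref{prop: toward boundedness in distal} --- which you correctly identified as the key lemma, and which is applied to the $a'$-definable type obtained by transporting the defining scheme from $a$ to $a'$ --- produces the required $b' \in M$. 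Your preliminary moves (passing to a saturated pair, invoking Lemma \ref{lem:dnfcp goes up2} to get $\dnfcp'$ over $A$, and recognizing that stable embeddedness yields definable types over $A$) are all right; what is missing is the back-and-forth framework that organizes them, in place of your attempted induction on $\P$-quantifier depth.
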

\begin{proof}
By Lemma \ref{lem:dnfcp goes up2}, $M$ has $\dnfcp'$ over $A$.
Take $(M,A)$ a $\left|T\right|^{+}$-saturated elementary extension
of the pair. Let $a,a'\in M$ be such that $A_{[a]}\equiv A_{[a']}$.
We have to show that $\tp_{L_{\P}}(a)=\tp_{L_{\P}}(a')$. We do a
back-and-forth. Take $b\in M$.

Case 1: $b\in A$. As $A_{[a]}\equiv A_{[a']}$, by $L_{\P}$-saturation
we can find $b'\in P$ such that $A_{[ab]}\equiv A_{[a'b']}$.

Case 2: $b\in M\setminus A$. By stable embeddedness and Case 1, we
may assume that $\tp(ab/A)$ is $a$-definable. It is enough to find
$b'\in M\setminus A$ such that $\tp(b',a')=\tp(b,a)$ and $\tp(ab'/A)$
is defined over $a'$ the same way $\tp(ab/A)$ is over $a$. Now
the previous lemma (and saturation) applies and gives such a $b'$. 
\end{proof}

\section{Naming indiscernible sequences, again}

We recall briefly the story of the question. In \cite{BB} Baldwin
and Benedikt had established the following.
\begin{fact}
\label{fac: BB boundedness} Let $T$ be NIP. Let $I\subset M$ be
a small indiscernible sequence indexed by a dense complete linear
order. Then $\Th\left(M,I\right)$ is bounded and the $L_{\P}$-induced
structure on $I$ is just the linear order.
\end{fact}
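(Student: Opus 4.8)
The statement to prove is Fact~\ref{fac: BB boundedness}: if $T$ is NIP and $I \subseteq M$ is a small indiscernible sequence indexed by a dense complete linear order, then $\Th(M,I)$ is bounded and the $L_\P$-induced structure on $I$ is just the linear order. The natural strategy is to reduce boundedness to a quantifier-elimination-type statement and to drive it by the definability of $\phi$-types over $I$ coming from Fact~\ref{fac: BB} and Remark~\ref{rem: types over complete sequence are definable}. Concretely, I would fix a sufficiently saturated pair $(M,I) \succ (M_0, I_0)$ and show that the $L_\P$-type of a tuple $a$ is determined by the induced structure $I_{[a]}$, i.e.\ by the data $\{\phi(I, a) : \phi \in L\}$ viewed inside the ordered set $I$. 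A back-and-forth is the cleanest packaging: given $a, a'$ with $I_{[a]} \equiv I_{[a']}$ (as ordered structures with the named traces), and given $b \in M$, I must find $b'$ with $I_{[ab]} \equiv I_{[a'b']}$ and $\tp(ab) = \tp(a'b')$.

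\textbf{Key steps.} First, use NIP and Fact~\ref{fac: BB} to show that for each $L$-formula $\phi(x, \bar y)$ and each tuple $a$, the set $\phi(I^{|x|}, a)$ (more precisely the $\phi$-type of $a$ over $I$) is definable in the ordered set $I$ with parameters a finite tuple $\bar\imath$ from $I$, possibly together with finitely many cuts/endpoint-elements that exist in $I$ because the order is complete; this is exactly Remark~\ref{rem: types over complete sequence are definable}. Thus the externally-induced structure that $a$ imposes on $I$ is \emph{internally} coded by a bounded formula applied to finitely many elements of $I$ and finitely many $L$-parameters from $a$. Second, observe that smallness of $I$ in $M$ lets one go back: if a consistent ``candidate'' $\phi$-type over $I$ is described by such a definition, and it is consistent with finitely much more information, then by smallness it is realized in $M$. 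Third, run the back-and-forth: for $b \in I$ this is handled by $L_\P$-saturation of the pair and the hypothesis $I_{[a]} \equiv I_{[a']}$; for $b \notin I$ one uses that $\tp_\phi(b/Ia)$ is, by Step 1, given by an ordered-set formula over $aI_0$-parameters, transfers that formula along the elementary equivalence $I_{[a]} \equiv I_{[a']}$ to get the analogous definition over $a'$, checks its consistency with $\tp(b/a)$, and realizes it by smallness. Finally, the ``induced structure on $I$ is just the linear order'' is essentially a corollary of the same analysis: any $L_\P$-definable subset of $I^n$ is, by boundedness, of the form $Q_1 \bar y_1 \in \P \cdots \phi(\bar x, \bar y)$, and evaluating the inner $L$-formula on tuples from $I$ and then quantifying over $\P = I$ lands, by Fact~\ref{fac: BB} applied to the indiscernible sequence, inside the Boolean algebra generated by the order.

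\textbf{Main obstacle.} The delicate point is the completeness hypothesis on the index order: Fact~\ref{fac: BB} only gives definability of $\phi$-types over $I$ \emph{after possibly adding finitely many elements on both sides of each cut}. When the index order is dense and complete, those extra ``cut-witnesses'' can be taken to be genuine elements of $I$ (or are harmless endpoints), so the definition of each $\phi$-type uses only parameters from $I$ itself; this is what makes $I_{[a]}$ alone suffice to pin down $\tp_{L_\P}(a)$. Verifying this carefully---in particular that the finitely many cuts needed are uniformly handled and that the back-and-forth in Case~2 produces a $b'$ \emph{outside} $I$ when $b \notin I$, with the correct order-interactions---is the part that needs real care; the rest is a routine compactness-plus-smallness argument in the style of the proof of Theorem~\ref{fac: dnfcp -> boundedness}. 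One also needs the observation, which follows from Fact~\ref{fac: BB}, that there are only boundedly many possibilities for the ``cut data'' of $a$ over $I$, so that the back-and-forth system is genuinely a bounded one and yields $\P$-boundedness rather than merely a weaker elimination.
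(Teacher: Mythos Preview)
The paper does not give a proof of this statement: it is recorded as a \emph{Fact} and attributed to Baldwin and Benedikt \cite{BB}, with no argument supplied. So there is no ``paper's own proof'' to compare against.

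That said, your outline is essentially the Baldwin--Benedikt strategy: use Fact~\ref{fac: BB} (and Remark~\ref{rem: types over complete sequence are definable}) to see that every $\phi$-type over $I$ is definable in the order on $I$ with parameters from $I$, and then run a back-and-forth showing that $I_{[a]}\equiv I_{[a']}$ (as ordered sets with the trace predicates) implies $\tp_{L_\P}(a)=\tp_{L_\P}(a')$, using smallness to realize the required types in $M$. The claim about the induced structure follows the same way. One point you flag as the ``main obstacle'' deserves a sharper treatment: once you pass to a saturated elementary extension $(M,I)\succ(M_0,I_0)$ of the pair, the new $I$ is no longer indexed by a complete order, so you cannot directly invoke Remark~\ref{rem: types over complete sequence are definable} there. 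The fix is to observe that the definability of $\phi$-types over $I_0$ by order formulas is \emph{uniform} in the parameter $a$ (a first-order fact about the original pair), and hence transfers to the saturated pair; your write-up should make this uniformity step explicit rather than appealing to completeness of the saturated $I$.
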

We have demonstrated (\cite[Proposition 3.2]{ExtDefI}) that in this
case $\left(M,I\right)$ is still $\NIP$. In this section we are
going to complement the picture by resolving some of the remaining
questions: naming a small indiscernible sequence of \emph{arbitrary}
order type preserves $\NIP$, while naming a large indiscernible sequence
may create $\IP$.

\subsection{Naming an arbitrary small indiscernible sequence}
\begin{lem}
\label{lem: small (M,I) equiv small (N,I)} Let $I$ be small in $M$
and $N\succ M$ such that $I$ is small in $N$. Then $\left(M,I\right)$
and $\left(N,I\right)$ are elementary equivalent.\end{lem}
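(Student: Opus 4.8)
The plan is to do a back-and-forth between the pairs $(M,I)$ and $(N,I)$, using the fact that both have the \emph{same} predicate $I$, so the forth direction only needs to produce new elements of the ambient model while keeping the formula $L_{\P}$-type over $I$ fixed. More precisely, I would prove by induction on $n$ the statement: for every finite tuple $\bar a \in M^n$ there is $\bar a' \in N^n$ with $I_{[\bar a]} \equiv I_{[\bar a']}$ (as structures in the language $L \cup \{D_{\phi(x,\bar a)} : \phi \in L\}$), i.e. $\bar a$ and $\bar a'$ induce the same traces on $I$; and symmetrically. Since $I$ is a common substructure (as a pure set with its induced structure, which is the same computed in $M$ or in $N$ because $M \prec N$), a standard amalgamation argument then shows $(M,I) \equiv (N,I)$: any $L_{\P}$-formula $\varphi(\bar x)$ is, up to equivalence in the relevant theories, built from $L$-formulas and quantifiers ranging over $\P$, and the key point is that whether $(M,I) \models \varphi(\bar a)$ depends only on which $L$-formulas with parameters from $\bar a$ hold of tuples from $I$, together with the $L$-type of $\bar a$ over $\emptyset$ in $M$ — but actually we should be careful, so see below.

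The cleaner route, which I would actually carry out: show directly that $(M,I)$ and $(N,I)$ satisfy a back-and-forth system. Start with the empty map. Given a partial $L_{\P}$-elementary-looking correspondence $\bar a \in M \leftrightarrow \bar a' \in N$ which we maintain to satisfy $I_{[\bar a]} \equiv I_{[\bar a']}$ \emph{and} $\tp_L(\bar a) = \tp_L(\bar a')$ (types in the pure $L$-sense, which makes sense since $M \prec N$), we extend. Forth: pick $b \in M$. We want $b' \in N$ with $\tp_L(\bar a b) = \tp_L(\bar a' b')$ and $I_{[\bar a b]} \equiv I_{[\bar a' b']}$. The first condition alone is satisfiable in $N$ since $M \prec N$; the content is that we can \emph{simultaneously} fix the induced structure of $\bar a' b'$ on $I$. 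Here is where smallness of $I$ in $N$ enters: the set of constraints ``$\bar a' b'$ realizes $\tp_L(\bar a b)$'' together with ``for each $L$-formula $\psi(x,\bar y, z)$ and each tuple $\bar\iota$ from $I$, $\psi(\bar a', z, \bar\iota)$ holds of the new element $z$ iff $\psi(\bar a, b, \bar\iota)$ does'' is a type over $\bar a' \cup I$; by smallness of $I$ in $N$ it is realized in $N$ \emph{provided it is consistent}. Consistency follows by compactness from finite consistency, and finite consistency is witnessed in $M$ itself by $b$, after transporting the finitely many relevant $I$-tuples from $M$ to $N$ using $I_{[\bar a]} \equiv I_{[\bar a']}$ — this is exactly the inductive hypothesis applied to locate, inside $N$, a finite piece of $I$ in the right configuration relative to $\bar a'$. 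Back is symmetric, using smallness of $I$ in $M$ (which holds since $I$ is small in $M$) — actually here we go from $N$ to $M$, producing $b \in M$, and smallness of $I$ in $M$ is what we need.

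I expect the main obstacle to be the bookkeeping around what ``$I_{[\bar a]} \equiv I_{[\bar a']}$'' should mean and verifying it is preserved at each step of the back-and-forth — in particular, making sure that maintaining this for all formulas $\phi(x,\bar a)$ simultaneously (not just finitely many) is consistent at the inductive step, which is precisely where smallness is used to realize an infinite type over $I \cup \bar a'$. A secondary subtlety: one must check that the back-and-forth as set up really computes $L_{\P}$-equivalence, i.e. that whenever $(M,I)\models \varphi(\bar a)$ for an $L_{\P}$-formula $\varphi$, the corresponding $\bar a'$ satisfies $(N,I)\models\varphi(\bar a')$; this is the standard lemma that a back-and-forth system closed under the relevant extension properties implies elementary equivalence, and it is routine once the extension properties (forth over $M$, back over $N$) are established, since the pairs' structure is entirely determined by $L$-structure plus the unary predicate $\P$ ranging over $I$, and the back-and-forth clauses cover adding ambient elements while the ``$b\in I$'' case is handled trivially by saturation-free direct transport since both predicates are literally $I$. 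Once these extension properties are proved, $(M,I) \equiv (N,I)$ follows immediately.
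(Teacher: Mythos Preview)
Your approach is correct and is essentially the paper's: a back-and-forth starting from the identity on $I$ and maintaining $\tp_L(\bar a I)=\tp_L(\bar a' I)$ (which is what your forth step actually uses, so you should write $I_{[\bar a]}=I_{[\bar a']}$ rather than $\equiv$). The paper streamlines the extension step: since $I$ is literally the same set in both models, the type $\tp_L(b/\bar a I)$ rewritten over $\bar a' I$ is immediately consistent (it is realized by $b$ in $M\prec N$) and then realized in $N$ by smallness, with no need to ``transport'' $I$-tuples or invoke a separate compactness argument.
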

\begin{proof}
We do a back and forth starting with the identity mapping from $I$
to $I$, and inductively choosing $A=\{a_{i}\}_{i<\omega}\subset M$
and $B=\left\{ b_{i}\right\} _{i<\omega}\subset N$ such that $\mbox{tp}_{L}(AI)=\mbox{tp}_{L}(BI)$.
Assume we have chosen $\{a_{m}b_{m}:m<n\}$ and we pick $a_{n}\in M$.
Consider $p(x,AI)=\mbox{tp}_{L}(a_{n}/AI)$. By the inductive assumption,
$p(x,BI)$ is consistent. Let $b_{n}\in N$ realize it (possible by
smallness). In the end, in particular, $AI\equiv^{qf-L_{P}}BI$.
\end{proof}
Assume that $D$ is an $L$-definable set which is uniformly stably
embedded in the sense of $T$ (and $T$ eliminates quantifiers in
a relational language $L$), let $\P$ name a subset of $D$. Now
let $(N,P)$ be a saturated model of the pair.

A formula is $D$-bounded if it is equivalent to one of the form $\psi(\bar{x})=Q_{1}z_{1}\in D...Q_{n}z_{n}\in D\,\bigvee_{i<m}\phi_{i}(\bar{x},\bar{z})\land\chi_{i}(\bar{x},\bar{z})$,
where $\phi_{i}(\bar{x},\bar{z})$ is a qf-$L$-formula and $\chi_{i}(\bar{x},\bar{z})$
is a qf-$\P$-formula (follows from the relationality of $L$).
\begin{lem}
\label{lem: formulas are D-bounded} Let $a,a'\in N$ have the same
$D$-bounded type, then $a\equiv^{L_{\mathbf{P}}}a'$.\end{lem}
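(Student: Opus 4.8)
The plan is to prove the lemma by a back-and-forth argument in the saturated pair $(N,P)$, showing that equality of $D$-bounded types is preserved when we adjoin one new element, and then invoking Tarski--Vaught style reasoning together with the previous preservation lemmas. So suppose $a, a' \in N$ have the same $D$-bounded type; I want to show that for any $b \in N$ there is $b' \in N$ with $ab \equiv_{D\text{-bdd}} a'b'$ (and symmetrically). Since a $D$-bounded formula in the variables $\bar x b$ only refers to $b$ through quantifiers ranging over $D$, and the only $L$-structure involved is quantifier-free (by relationality of $L$), the $D$-bounded type of $ab$ is determined by the family of sets $\phi_i(a, D) \cap (\text{relevant traces})$ together with the $\P$-structure on $D$, i.e.\ by data that lives over $A := aD$ in a controlled way. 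The key point to exploit is the uniform stable embeddedness of $D$ in the sense of $T$: every $L$-formula $\phi(\bar x, \bar z)$ with $\bar z$ ranging over $D$ has, after substituting $b$ for part of $\bar x$, a trace $\phi(a,b,D)$ that is $L$-definable over $D$ by a formula $\psi_\phi(\bar z, e)$ with $e \in D$ and $\psi_\phi$ depending only on $\phi$.

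The main steps, in order: (1) Reduce to the one-step back-and-forth: it suffices to produce, for each $b$, a matching $b'$ preserving the $D$-bounded type of the pair, because a $D$-bounded sentence about $a$ is then verified quantifier-by-quantifier, each quantifier ranging over $D = \P(N)$, against the matching quantifier for $a'$. (2) For the element $b \in N$, use uniform stable embeddedness (in the sense of $T$) to replace, for each relevant $L$-formula $\phi$, the externally-indexed trace over $D$ by an $L(D)$-definable trace $\psi_\phi(D, e_\phi)$, with parameters $e_\phi \in D$; collect this finite data (for the finitely many $\phi$ appearing) into a type over $D$. (3) Transport this data via the hypothesis $ab$-to-$a'$: because $a \equiv_{D\text{-bdd}} a'$, the $L_{\P}$-type over $\P$ expressing ``there exist parameters in $D$ coding traces that are consistent with an element realizing $\tp(b/a)$ and fitting together correctly with the $\P$-structure'' holds of $a$ iff of $a'$ (here we crucially need that this statement is $D$-bounded, which it is, by the usual trick: the quantifiers over the witness parameters and over test-points of $D$ are all $\P$-quantifiers, and the inner part is a Boolean combination of qf-$L$ and qf-$\P$ formulas). (4) By saturation of $(N,P)$ find the matching parameters $e'_\phi \in P(N)$, then by smallness of $\P$ — or just saturation of $N$ — realize $b'$ realizing the corresponding traces and with $\tp_L(b'a') = \tp_L(ba)$. (5) Check that $ab'$ and $a'b'$... rather, that $ab \equiv_{D\text{-bdd}} a'b'$, which is immediate from how $b'$ was chosen.

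The step I expect to be the main obstacle is (3): making precise the claim that ``the right $L_{\P}$-sentence with parameter $a$ expressing the existence of a good $b$ is $D$-bounded,'' so that the hypothesis $a \equiv_{D\text{-bdd}} a'$ can actually be applied. The subtlety is that $b$ a priori ranges over all of $N$, not over $D$, so one does not get a $D$-bounded formula for free; the resolution is precisely the uniform stable embeddedness of $D$ in the sense of $T$, which lets us replace ``$\exists b \in N$'' by ``$\exists$ (finitely many coding parameters $e_\phi \in D$)'' once we also quantify (over $D$) to assert that the coded traces are mutually consistent and consistent with the prescribed quantifier-free $L$-type of $b$ over $a$. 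I would state this consistency condition as a single $D$-bounded formula $\chi(\bar x)$ — of exactly the shape in the definition of $D$-bounded, with $\P$-quantifiers on the outside and a Boolean combination of qf-$L$ and qf-$\P$ formulas inside — and verify $(N,P) \models \chi(a) \leftrightarrow \chi(a')$ from the hypothesis. The remaining steps are routine compactness/saturation arguments of the kind already used in Lemma \ref{lem: small (M,I) equiv small (N,I)} and in the proof of Theorem \ref{fac: dnfcp -> boundedness}.
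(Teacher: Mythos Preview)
Your outline is in the right spirit—back-and-forth plus uniform stable embeddedness of $D$—but the paper organizes the argument differently in a way that sidesteps exactly the obstacle you flag in step (3).

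The paper splits into two cases. If $b\in D$, then ``$\exists y\in D\,\theta(x,y)$'' is already $D$-bounded for any $D$-bounded $\theta$, so the matching $b'$ is found immediately by the hypothesis and saturation. If $b\notin D$, the paper does \emph{not} try to express ``$\exists y\,\theta(x,y)$'' as a $D$-bounded formula. Instead it first uses Case~1 to enlarge $a$ (and correspondingly $a'$) by finitely many elements of $D$ so that $\tp_L(ab/D)$ becomes $L$-definable over $c:=a\cap D$. Then it simply picks any $b'$ with $ab\equiv^L a'b'$ (possible by saturation of $N$, since $a\equiv^L a'$ follows from the $D$-bounded hypothesis and QE). The verification that $ab\equiv^{D\text{-bdd}}a'b'$ is then a direct computation: in any $D$-bounded formula $Q\bar z\in D\,\bigvee_i\phi_i(ab,\bar z)\wedge\chi_i(ab,\bar z)$, replace each $\phi_i(ab,\bar z)$ by its defining scheme $d_{\phi_i}(c,\bar z)$, note that the $\chi_i$ depend only on which coordinates of $ab$ lie in $\P\subseteq D$, and observe that the resulting formula involves only $c$ and $D$-quantifiers; since $c\equiv^{D\text{-bdd}}c'$ (as subtuples of $a,a'$), the value transfers.

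So where you propose to code $b$ by parameters $e_\phi\in D$ and then argue that ``there exists a $b$ compatible with these codes'' is itself $D$-bounded, the paper instead absorbs those very parameters into $a$ via Case~1 \emph{before} choosing $b'$, and then picks $b'$ by $L$-type alone. This makes the existence of $b'$ a pure $L$-saturation fact, with no need to package a consistency statement as a $D$-bounded formula. Your route can be made to work (the ``$\exists b$ with prescribed traces over $a,e_\phi$'' is, after QE in $T$ and since $D$ is $L$-definable, a quantifier-free $L$-condition on $a,e_\phi$, hence the whole thing is $D$-bounded), but the paper's two-step move is shorter and avoids the bookkeeping. One small correction: you invoke ``smallness of $\P$'' in step (4), but no smallness is assumed here—$(N,P)$ is just a saturated model of the pair, and saturation is what you actually need.
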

\begin{proof}
We do a back-and -forth. Assume that $a\equiv^{L^{D-bdd}}a'$, and
let $b\in N$ be arbitrary.

Case 1. $b\in D$: Consider $p(x,a)=\tp_{L^{D-bdd}}(ba)$. For any
finite $p_{0}(x,a)\subseteq p(x,a)$ we have $\models\exists x\in D\, p_{0}(x,a)$,
which is a $D$-bounded formula, thus $\models\exists x\in D\, p_{0}(x,a')$,
and by saturation of $N$ there is $b'\in D$ satisfying $ab\equiv^{L^{D-bdd}}a'b'$.

Case 2. $b\notin D$: Possibly adding some points in $D$ using (1),
we may assume that $\tp_{L}(ab/D)$ is $L$-definable over $c=a\cap D$.
Take some $b'\in N$ such that $ab\equiv^{L}a'b'$, then $\tp_{L}(a'b'/D)$
is $L$-definable over $c'=a'\cap D$ using the same formulas. We
want to check that $ab\equiv^{L^{D-bdd}}a'b'$. Let $\psi(\bar{x})$
be a $D$-bounded formula, say $\psi(\bar{x})=Q_{1}z_{1}\in D...Q_{n}z_{n}\in D\bigvee_{i<m}\phi_{i}(\bar{x},\bar{z})\land\chi_{i}(\bar{x},\bar{z})$.
Then we have:\\
$\models Q_{1}x_{1}\in D...Q_{n}x_{n}\in D\bigvee_{i<m}\phi_{i}(ab,\bar{x})\land\chi_{i}(ab,\bar{x})$
$\Leftrightarrow$ $\models\bar{Q}\bar{x}\in D\bigvee_{i<m}d_{\phi_{i}}(c,\bar{x})\land\chi_{i}^{'}(\bar{x})$
(as we know the truth values of $\P(x)$ on $ab$) $\Leftrightarrow$
$\models\bar{Q}\bar{x}\in D\bigvee_{i<m}d_{\phi_{i}}(c',\bar{x})\land\chi_{i}^{'}(\bar{x})$
(as $c\equiv^{L_{\P}^{D-bdd}}c'$) $\Leftrightarrow$ $\models Q_{1}x_{1}...Q_{n}x_{n}\bigvee_{i<m}\phi_{i}(a'b',\bar{x})\land\chi_{i}(a'b',\bar{x})$
(as the truth values of $\P(x)$ on $a'b'$ are the same by the choice
of $b'$ and assumption on $a'$). \end{proof}
\begin{lem}
\label{lem: indiscernibles, P-boundedness} Assume that $\Th(D_{\ind},P)$
is bounded. Then $\Th(M,P)$ is bounded.\end{lem}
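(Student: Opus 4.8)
The plan is to reduce everything to $D$-bounded formulas via Lemma \ref{lem: formulas are D-bounded}, and then to convert a $D$-bounded formula into a $\P$-bounded one using the hypothesis. First I would note that the class of $D$-bounded formulas is closed under Boolean combinations (disjunctions: pad and rename the quantifier blocks; negations: put the matrix into disjunctive normal form and regroup the $L$-atoms separately from the $\P$-atoms). By Lemma \ref{lem: formulas are D-bounded} the natural continuous surjection from the space of complete $L_{\P}$-types to the space of complete $D$-bounded types (in a fixed variable context) is injective, hence a homeomorphism of compact Hausdorff spaces; consequently every $L_{\P}$-formula is equivalent modulo $\Th(M,\P)$ to a $D$-bounded one, and it is enough to show that each $D$-bounded formula is equivalent to a $\P$-bounded one.

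So fix a $D$-bounded $\psi(\bar x)=Q_{1}z_{1}\in D\cdots Q_{n}z_{n}\in D\,\bigvee_{i<m}\phi_{i}(\bar x,\bar z)\wedge\chi_{i}(\bar x,\bar z)$ with the $\phi_{i}$ quantifier-free $L$ and the $\chi_{i}$ quantifier-free $\P$. Splitting into finitely many cases according to the $\P$-pattern of $\bar x$ and to which coordinates of $\bar x$ lie in $D$ (each such case being a quantifier-free, hence $\P$-bounded, condition) and substituting, I may assume every $\chi_{i}$ depends only on the bound variables $\bar z$; then $\bar x$ occurs in $\psi$ only inside the $L$-formulas $\phi_{i}(\bar x,\bar z)$, all of whose $\bar z$-variables range over $D$. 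Since $D$ is uniformly stably embedded, the trace $\phi_{i}(\bar a,D^{|\bar z|})$ is a $\bar a$-definable subset of $D^{|\bar z|}$, hence a set definable in $D_{\ind}$; its canonical parameter $e_{i}$ lies in $\dcl^{\eq}(\bar a)$ and, $D$ being stably embedded, in the imaginary sorts of $D_{\ind}$, and by uniformity it is obtained from $\bar a$ by an $\emptyset$-definable map $g_{i}$ depending only on $\phi_{i}$. Writing $X_{e}$ for the $D_{\ind}$-definable set coded by an imaginary $e$, we get $\psi(\bar x)\leftrightarrow\widehat{\psi}(g_{1}(\bar x),\ldots,g_{m}(\bar x))$ where $\widehat{\psi}(\bar e):=Q_{1}z_{1}\in D\cdots Q_{n}z_{n}\in D\,\bigvee_{i<m}((z_{1},\ldots,z_{n})\in X_{e_{i}})\wedge\chi_{i}(\bar z)$ is a genuine formula of the pair $(D^{\eq}_{\ind},\P)$. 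Applying the hypothesis — $\Th(D_{\ind},\P)$ is bounded, and this passes to $\Th(D^{\eq}_{\ind},\P)$ since the imaginary sorts are interpretable over $D_{\ind}$ and $\P$ still lives in a real sort — we get $\widehat{\psi}(\bar e)\equiv Q'_{1}u_{1}\in\P\cdots Q'_{r}u_{r}\in\P\,\delta(\bar e,\bar u)$ with $\delta\in L^{\eq}$. Substituting $\bar e=(g_{i}(\bar x))_{i}$ shows $\psi(\bar x)$ equivalent to the $\P$-bounded formula $Q'_{1}u_{1}\in\P\cdots Q'_{r}u_{r}\in\P\,\delta(g_{1}(\bar x),\ldots,g_{m}(\bar x),\bar u)$ (working in $T^{\eq}$, or descending to $L$ afterwards by re-coding imaginaries), as desired.

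The step I expect to be the main obstacle is the passage from boundedness of $\Th(D_{\ind},\P)$ to boundedness of $\Th(D^{\eq}_{\ind},\P)$: concretely, that when quantifiers in a bounded pair are eliminated one may arrange the $L$-matrices to be invariant under the relevant definable equivalence relations, so that they descend to the quotient sorts. This is the same kind of bookkeeping as in the quantifier-elimination analyses of \cite{CaZi} and \cite{BB}, and it is where the real content lies; the surrounding points — Boolean closure of $D$-bounded formulas, that canonical parameters of $D$-definable sets land in $D^{\eq}_{\ind}$, and that the finitely many $\P$-patterns of $\bar x$ split off as separate $\P$-bounded cases — are routine. (Alternatively the whole argument can be run as a back-and-forth inside the saturated pair $(N,\P)$ after Lemma \ref{lem: small (M,I) equiv small (N,I)}, but it amounts to the same computation; the $\exists$-quantifier over $D$ that would arise if one coded traces by elements of $D$ rather than by imaginaries is exactly the point being finessed by passing to $D^{\eq}_{\ind}$.)
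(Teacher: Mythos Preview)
Your approach is genuinely different from the paper's. The paper runs a direct back-and-forth in a saturated $(N,P)$ with invariant $P_{[a]}\equiv P_{[a']}$: for $b\notin D$ it recycles Case~2 of Lemma~\ref{lem: formulas are D-bounded}, using that for the tuple $c=a\cap D\in D$ the implication ``same $P$-bounded type $\Rightarrow$ same $D$-bounded type'' is immediate from boundedness of $(D_{\ind},P)$; for $b\in D$ it appeals to boundedness of $(D_{\ind},P)$ and saturation. Your route is syntactic: collapse every $L_{\P}$-formula to a $D$-bounded one via Lemma~\ref{lem: formulas are D-bounded} and Stone duality, then push the free variables $\bar x$ into canonical parameters living in $D_{\ind}^{\eq}$ and invoke boundedness there. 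Both strategies are sound, and your parenthetical remark that ``the whole argument can be run as a back-and-forth'' is exactly the paper's proof.

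You are right that the crux of your version is the passage from boundedness of $(D_{\ind},\P)$ to boundedness of $(D_{\ind}^{\eq},\P)$, but calling it ``bookkeeping'' undersells it: after pulling an $L_{\ind,\P}^{\eq}$-formula $\widehat\psi(\bar e)$ back to real variables $\bar d$ with $\pi(\bar d)=\bar e$ and writing it as $Q\bar u\in\P\,\delta(\bar d,\bar u)$, there is no reason $\delta(\cdot,\bar u)$ is $\pi$-invariant for each fixed $\bar u$ --- only the whole quantified formula is --- so one cannot simply ``descend'' the matrix through mixed quantifier prefixes. The correct fix is semantic rather than syntactic: in a saturated $(D_{\ind},\P)$, if imaginaries $\bar e,\bar e'$ have the same $P$-bounded $L_{\ind}^{\eq}$-type, then $P_{[\bar e]}$ and $P_{[\bar e']}$ are saturated and elementarily equivalent, hence isomorphic via some $\sigma\colon P\to P$; pushing $\tp_{L_{\ind}}(\bar d/P)$ through $\sigma$ and using that $\sigma$ respects all $L_{\ind}^{\eq}$-relations between $\bar e,\bar e'$ and elements of $P$, compactness gives $\bar d'$ with $\pi(\bar d')=\bar e'$ and $P_{[\bar d]}\equiv P_{[\bar d']}$, whence $\bar d\equiv_{L_{\ind,\P}}\bar d'$ by real-sort boundedness and so $\bar e\equiv_{L_{\ind,\P}^{\eq}}\bar e'$. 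This is short, but it is a real argument rather than a rearrangement of quantifiers; once you supply it, your proof is complete. (In fairness, the paper's own ``$b\in D$'' step is compressed enough that it is arguably leaning on the same fact.)
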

\begin{proof}
Let $\left(N,P\right)$ be saturated. Assume that $P_{\left[a\right]}\equiv P_{[a']}$
and let $b$ be given.

If $b\in D$, then we find a $b'\in D$ such that $P_{[ab]}\equiv P_{[a'b']}$
by the assumption that $\left(D,P\right)$ is bounded and saturation. 

If $b\notin D$, then we take the same $b'$ as in (2) of the previous
lemma and conclude that $bb'\equiv^{L_{P}^{D-bdd}}aa'$ in the same
way (using that $c\equiv^{L_{P}^{P-bdd}}c'$ $\Rightarrow$$c\equiv^{L_{P}^{D-bdd}}c'$),
which is sufficient (clearly, if two tuples have the same $D$-bounded
$L_{\P}$-type, then they have the same $\P$-bounded $L_{\P}$-type). \end{proof}
\begin{lem}
\label{lem: subpair is NIP} In the situation as above, if $T$ is
$\NIP$ and $(D,P)$ with the induced quantifier-free structure is
$\NIP$, then $T_{P}$ is $\NIP$.\end{lem}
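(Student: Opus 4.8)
The plan is to deduce this from the boundedness result just proved (Lemma \ref{lem: indiscernibles, P-boundedness}) together with the general transfer principle Fact \ref{fac: boundedness implies NIP}(1). The statement assumes $T$ is $\NIP$ and that $(D,P)$ with its induced quantifier-free structure is $\NIP$; we want $T_{\P}$ to be $\NIP$. Recall that in the present setup $D$ is uniformly stably embedded in $T$, so the induced structure on $D$ is (up to interdefinability) just the quantifier-free $L$-structure together with the predicate $\P$, and hence ``$(D,P)$ with the induced quantifier-free structure is $\NIP$'' is exactly the hypothesis we need about $D_{\ind(L_{\P})}$.

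First I would invoke Lemma \ref{lem: indiscernibles, P-boundedness}: to apply it we need $\Th(D_{\ind},P)$ to be bounded. Here I would use the hypothesis that $T$ is $\NIP$ together with the assumption on $(D,P)$. Strictly, we are in a situation where $D$ is a uniformly stably embedded $L$-definable set and $\P$ names a small subset of it; the induced structure $(D,P)$ is itself an expansion of a (uniformly stably embedded, hence ``its own monster'') structure by a predicate naming a small set, and boundedness of that pair over its own universe is what is required. If $(D,P)$ is $\NIP$ and $P$ is small in $D$, then by the earlier boundedness criteria (e.g.\ Theorem \ref{fac: dnfcp -> boundedness} applied inside $D$, or directly the hypothesis that we are handed boundedness of the induced pair) $\Th(D_{\ind},P)$ is bounded. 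Granting this, Lemma \ref{lem: indiscernibles, P-boundedness} gives that $\Th(M,P)=T_{\P}$ is bounded.

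Next, having boundedness of $T_{\P}$, I would apply Fact \ref{fac: boundedness implies NIP}(1). That fact says: if $(M,A)$ is bounded, $M$ is $\NIP$, and $A_{\ind}$ is $\NIP$, then $(M,A)$ is $\NIP$. We have $M\models T$ with $T$ $\NIP$ by hypothesis. It remains to check that the induced structure $A_{\ind}=P_{\ind(L_{\P})}$ is $\NIP$. Since $P\subseteq D$ and $D$ is uniformly stably embedded, every $L_{\P}$-formula restricted to tuples from $P$ is equivalent to an $L_{\P}$-formula with parameters in $D$ of the $D$-bounded form, and on $P$ this in turn reduces to a quantifier-free $L$-formula in conjunction with a $\P$-formula; thus the induced structure on $P$ is interpretable in $(D,P)$ with its quantifier-free structure, which is $\NIP$ by hypothesis. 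Hence $P_{\ind}$ is $\NIP$, and Fact \ref{fac: boundedness implies NIP}(1) yields that $T_{\P}$ is $\NIP$, as desired.

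The main obstacle I anticipate is the bookkeeping around ``the induced structure on $P$ versus the induced structure on $D$'': one must be careful that uniform stable embeddedness of $D$ in $T$ really lets us replace arbitrary $L_{\P}$-formulas (which may quantify over all of $M$) by $D$-bounded ones when evaluated on tuples from $D$ (this is essentially Lemma \ref{lem: formulas are D-bounded}), and that boundedness of $\Th(D_{\ind},P)$ is genuinely available from the $\NIP$ hypothesis on $(D,P)$ rather than being an extra assumption. Once the reduction to $D$-bounded formulas is in place, both the verification that $P_{\ind}$ is $\NIP$ and the application of the two quoted facts are routine.
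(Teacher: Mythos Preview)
Your proposed route has a genuine gap. You aim to first show that $T_{\P}$ is $\P$-bounded by invoking Lemma \ref{lem: indiscernibles, P-boundedness}, but that lemma requires $\Th(D_{\ind},P)$ to be bounded, and nothing in the hypotheses of Lemma \ref{lem: subpair is NIP} gives you this. The only assumptions are that $T$ is $\NIP$ and that $(D,P)$ with its induced quantifier-free structure is $\NIP$; there is no smallness assumption on $P$ in the setup preceding Lemma \ref{lem: formulas are D-bounded} (it just says ``let $\P$ name a subset of $D$''), and $\NIP$ alone certainly does not imply boundedness of a pair. Your appeals to Theorem \ref{fac: dnfcp -> boundedness} or to ``directly the hypothesis that we are handed boundedness'' do not work: the former needs $\dnfcp$ and smallness, neither of which is available here, and the latter is simply not among the hypotheses.

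The paper avoids $\P$-boundedness altogether. By Lemma \ref{lem: formulas are D-bounded}, every $L_{\P}$-formula is already equivalent to a $D$-bounded one. The hypothesis that $(D,P)$ with the quantifier-free structure is $\NIP$ immediately yields that $D$ equipped with all $D$-bounded $L_{\P}$-formulas is $\NIP$, since on tuples from $D$ the $D$-bounded formulas are definable from the quantifier-free $L_{\P}$-structure. One then runs the argument behind Fact \ref{fac: boundedness implies NIP} (this is the ``Corollary 2.5 in [CS]'' the paper invokes) with $D$ playing the role of $\P$: by induction on the number of bounded quantifiers, each $D$-bounded formula is $\NIP$, using that $T$ is $\NIP$ for the quantifier-free part and that the induced $D$-bounded structure on $D$ is $\NIP$ for the inductive step; stable embeddedness of $D$ makes this cleaner than the original argument. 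Thus $\NIP$ of $T_{\P}$ is obtained directly, without ever establishing $\P$-boundedness and without any size restriction on $P$.
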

\begin{proof}
As $D_{\mbox{ind}\left(L_{P}^{qf}\right)}$ is $\NIP$, it follows
that $D_{\mbox{ind}\left(L_{P}^{D-bdd}\right)}$ is $\NIP$. Conclude
as in Corollary 2.5 in {[}CS{]} (and even easier as $D$ is actually
stably embedded).\end{proof}
\begin{thm}
Let $\left(M,I\right)$ be small and $M$ be $NIP$. Then $(M,I)$
is $NIP$.\end{thm}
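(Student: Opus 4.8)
The plan is to reduce to the case of a dense complete index, which is handled by Fact~\ref{fac: BB boundedness} together with the $\NIP$-preservation result \cite[Proposition~3.2]{ExtDefI}. By Lemma~\ref{lem: small (M,I) equiv small (N,I)} it is enough to prove $(N,I)$ is $\NIP$ for one $N\succ M$ in which $I$ is still small, so I am free to take $N$ as saturated as needed. Write $I=(a_t)_{t\in\mathcal I}$, and fix a dense complete linear order $\mathcal J$ with $\mathcal I\hookrightarrow\mathcal J$ and $|\mathcal J|\le 2^{|\mathcal I|+\aleph_0}$ (for instance the Dedekind completion of $\mathcal I\times\mathbb Q$ ordered lexicographically, with $t\mapsto(t,0)$). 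By indiscernibility of $I$ and compactness, the partial type over $I$ expressing ``$(x_s)_{s\in\mathcal J}$ is an indiscernible sequence with the same EM-type as $I$, and $x_t=a_t$ for every $t\in\mathcal I$'' is consistent; realizing it in a sufficiently saturated $N\succ M$ gives an indiscernible sequence $J=(a_s)_{s\in\mathcal J}\supseteq I$ indexed by a dense complete order, with both $J$ and $I$ small in $N$.

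Now name $J$ first. By Fact~\ref{fac: BB boundedness}, $\Th(N,J)$ is bounded and the induced structure on $J$ is just its linear order; by \cite[Proposition~3.2]{ExtDefI} the pair $(N,J)$ is $\NIP$. Put $T_1=\Th(N,J)$, which after Morleyization eliminates quantifiers in a relational language, and view the predicate $\P$ naming $J$ as a $\emptyset$-definable set $D$ of $T_1$. Since the induced structure on $D$ is just $(\mathcal J,<)$, the set $D$ is stably embedded in $T_1$, hence uniformly stably embedded (being $\emptyset$-definable, by the remark in the Preliminaries). Expanding $(N,J)$ by a further unary predicate naming $I\subseteq D$ now fits exactly the setup preceding Lemma~\ref{lem: subpair is NIP}, with ambient theory $T_1$, uniformly stably embedded set $D$, and named subset $P=I$. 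The induced quantifier-free structure on the pair $(D,P)$ is the linear order $(\mathcal J,<)$ with a unary predicate marking $\mathcal I$; a linear order expanded by unary predicates is $\NIP$ (classical --- every formula has bounded alternation), so this structure is $\NIP$. As $T_1$ is $\NIP$, Lemma~\ref{lem: subpair is NIP} gives that $\Th(N,J,I)$ is $\NIP$. Since $(N,I)$ is a reduct of $(N,J,I)$, $\Th(N,I)$ is $\NIP$, and by Lemma~\ref{lem: small (M,I) equiv small (N,I)} so is $\Th(M,I)=\Th(N,I)$.

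The stable embeddedness of $D$ in $T_1$ and the saturation/compactness bookkeeping in the first step are routine given Fact~\ref{fac: BB boundedness} and the discussion of stable embeddedness in the Preliminaries; the only external input is the $\NIP$-ness of a linearly ordered set with a unary predicate, which I would simply quote. I expect the point needing the most care to be the construction of $J$: one must arrange that $I$ is \emph{literally} a subset of $J$, with both still small in $N$, so that ``naming $I$'' genuinely becomes ``naming a subset of the definable, uniformly stably embedded set $J$'' and Lemmas~\ref{lem: formulas are D-bounded}--\ref{lem: subpair is NIP} apply verbatim; everything else is then a matter of chasing the predicate through those lemmas.
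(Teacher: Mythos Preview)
Your proposal is correct and follows essentially the same route as the paper: reduce via Lemma~\ref{lem: small (M,I) equiv small (N,I)} to a sufficiently saturated ambient model, extend $I$ to a small dense complete indiscernible $J$, name $J$ to get an $\NIP$ theory with $D=J$ uniformly stably embedded (Fact~\ref{fac: BB boundedness} plus \cite[Proposition~3.2]{ExtDefI}), and then name $I\subseteq D$. The only cosmetic difference is in the final step: you invoke Lemma~\ref{lem: subpair is NIP} directly (checking that a dense linear order with a unary predicate is $\NIP$), while the paper instead goes through Lemmas~\ref{lem: formulas are D-bounded} and~\ref{lem: indiscernibles, P-boundedness} to get $\P$-boundedness and then applies Fact~\ref{fac: boundedness implies NIP}; these are two paths through the same block of preparatory lemmas.
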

\begin{proof}
Let $\left(M,I\right)$ be small. By Lemma \ref{lem: small (M,I) equiv small (N,I)}
we may assume that $M$ is $\left(2^{\left|I\right|}\right)^{+}$-saturated.
Let $I\subseteq J\subset M$, where $J$ is a dense complete indiscernible
sequence such that $\left(M,J\right)$ is still small. Name $J$ by
$D$, and let $T'$ be a Morleyzation of $T_{D}$. Then by Fact \ref{fac: BB boundedness},
$T'$ is $NIP$ and $D$ is stably embedded. Thus formulas in $T'_{\P}$
are $D$-bounded by Lemma \ref{lem: formulas are D-bounded}. It is
easy to check directly that $\left(J_{\ind},I\right)$ is bounded,
thus $T'_{\P}$ is $\P$-bounded by Lemma \ref{lem: indiscernibles, P-boundedness}.
Conclude by Fact \ref{fac: boundedness implies NIP} (as the structure
induced on $I$ is still $\NIP$).
\end{proof}

\subsection{Large indiscernible sequence producing $\IP$}

Take $L=\{<,E\}$ and $T$ saying that $<$ is $DLO$ and $E$ is
an equivalence relation with infinitely many classes, all of which
are dense co-dense with respect to $<$. It is easy to check by back-and-forth
that this theory eliminates quantifiers and that it is $\NIP$. Let
$M/E$ denote the imaginary sort of $E$-equivalence classes.

Let $D$ be an equivalence class, pick some $x_{0}\in M$ outside
of it and take $\P$ to name $D\cap(-\infty,x_{0})$. Consider the
formula 
\[
\phi(x)=\exists y\forall s<y\exists t\in\P,yEx\wedge s<t<y\wedge(\neg\exists u>y,u\in\P).
\]
Then $\phi(x)$ picks out exactly the points equivalent to $x_{0}$.
Easily, that formula is not equivalent to a $D$-bounded one (simply
because all imaginary elements of equivalence classes different from
$D$ have exactly the same type over $D$).

Now consider the following formula:

\[
S(x_{1},x_{2})=\exists y_{1},y_{2},y_{1}Ex_{1}\wedge y_{2}Ex_{2}\wedge L_{0}(y_{1})\wedge R_{0}(y_{2})\wedge(\forall y_{1}<z<y_{2},\neg\P(z))
\]

where $L_{0}(y)=\exists t\in\P\forall s\in\P,t<y\wedge(s>t\rightarrow y<s)$
and same for $R_{0}(y)$, but reversing the inequalities.
\begin{claim}

\begin{enumerate}
\item Let $D$ be an equivalence class. Then any increasing sequence contained
in $D$ is indiscernible.
\item Let $G$ be an arbitrary countable graph. Then we can choose $P\subseteq D$
such that $(M/E,S)\cong G$.
\end{enumerate}
\end{claim}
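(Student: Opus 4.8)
The plan is to prove the two parts of the claim separately, both essentially by hand using quantifier elimination for $T$ in the language $\{<,E\}$.

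For part (1), I would argue directly that any finite increasing tuple $a_1<\dots<a_n$ inside a single $E$-class $D$ has a quantifier-free $L$-type determined purely by $n$. Since $L$ is relational and $T$ has quantifier elimination, it suffices to check the atomic relations: for $a_i,a_j$ in $D$ we always have $a_iEa_j$, and the order relations are fixed by the assumption that the tuple is increasing. Hence any two increasing $n$-tuples from $D$ have the same type, which is exactly indiscernibility of an increasing sequence in $D$. (Note the sequence need not be indiscernible as an unordered set, only as an ordered one — which matches the way the sequence $J$ is used.)

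For part (2) — which I expect to be the main obstacle — the strategy is to realize an arbitrary countable graph $G$ on the imaginary sort $M/E$ via the formula $S(x_1,x_2)$. The key point is that $S(x_1,x_2)$ holds iff the class $x_1/E$ has a point $y_1$ which is a left endpoint of a ``gap'' of $P$ (the $L_0$ condition: $y_1$ is the supremum of an initial segment of $P$, or sits just above a point of $P$ with nothing of $P$ immediately above), the class $x_2/E$ has a point $y_2$ which is a right endpoint of a gap of $P$ (the $R_0$ condition), and there is no point of $P$ strictly between $y_1$ and $y_2$. So $S$ records, for each pair of $E$-classes, whether there is an ``empty interval'' of $P$ running from a point realizing the class $x_1/E$ up to a point realizing $x_2/E$. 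The construction then goes: enumerate $M/E$ as $\{e_0,e_1,\dots\}$ with $e_0$ corresponding to the class $D$ we are naming inside of; for each edge $\{e_i,e_j\}$ of $G$ (with $i,j\geq 1$, and using that $G$ has no self-loops or whatever convention is needed), carve out a small open interval $(\alpha,\beta)\subseteq D$ disjoint from $P$ constructed so far, and arrange that the left endpoint region contains a point of the appropriate shape forcing $L_0$ and $R_0$ for representatives of $e_i$ and $e_j$ respectively — here one uses that every class is dense and co-dense in $<$, so inside any interval of $D$ one can find points of every class and interleave $P$ and its complement freely. Do this so that each such ``witnessing gap'' is used by exactly one edge, and so that non-edges get no witnessing gap; by quantifier elimination / genericity one checks the resulting $P\subseteq D$ satisfies $S(e_i,e_j)$ iff $\{e_i,e_j\}\in E(G)$.

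The hard part will be bookkeeping the interleaving so that $S$ captures \emph{exactly} the edge relation and nothing spurious: one must ensure distinct edges don't accidentally also witness $S$ for other pairs, and that the $L_0$/$R_0$ conditions are triggered only at the intended classes. I would handle this with a back-and-forth / finite-injury style construction over $\omega$, at stage $n$ deciding membership of $P$ on finitely many points and finitely many small intervals so as to (a) create the gap for the $n$-th edge and (b) kill any emerging unwanted $S$-relation by adding a point of $P$ into the offending interval. Density and co-density of all classes with respect to $<$ is what makes every such local modification possible. Finally, since the induced structure $(M/E,S)$ only depends on the quantifier-free $L$-type of $P$-membership, and the above controls exactly that, we get $(M/E,S)\cong G$, and combined with the $\IP$ of the random graph this shows $T_{\P}$ has $\IP$.
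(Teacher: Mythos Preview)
Your argument for part (1) is exactly the paper's: quantifier elimination reduces the type of an increasing tuple in $D$ to its order type and the fact that all coordinates are $E$-equivalent, which is constant. Nothing more is needed.

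For part (2) your approach is correct in spirit but substantially more elaborate than what the paper does. The paper's proof is a one-line direct construction: enumerate the edges of $G$; for each edge $e_{1}e_{2}$ pick representatives $a_{1}\in e_{1}$, $a_{2}\in e_{2}$ so that the open interval $(a_{1},a_{2})$ is disjoint from all previously chosen intervals (possible since every class is dense); then set $P=D\setminus\bigcup(a_{1},a_{2})$. That is the entire construction --- no back-and-forth, no finite injury, no ``killing'' stage. The point you are worried about, that distinct gaps might accidentally witness $S$ for the wrong pair of classes, is handled automatically by the pairwise disjointness of the intervals: each maximal $P$-free interval has a well-defined left endpoint in a specific $E$-class and a well-defined right endpoint in a specific $E$-class, and these are exactly the representatives you chose. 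So a gap can only witness the single edge it was built for, and non-edges get no witnessing gap at all. Your injury machinery is solving a problem that does not arise.

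Two smaller points on your write-up. First, you exclude $e_{0}=D$ from the vertex set and only build edges among $e_{i}$ with $i\geq 1$; but the claim asks for $(M/E,S)\cong G$ with $M/E$ as the full vertex set, so $D$ must be one of the vertices and may well carry edges. There is nothing special about $D$ in the construction --- the paper's argument treats it uniformly with the other classes. Second, your phrase ``interval $(\alpha,\beta)\subseteq D$'' is confused: the intervals live in the linear order $M$, and one removes their intersection with $D$ from $P$; their endpoints are not in $D$ but in the classes $e_{1},e_{2}$ of the given edge.
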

\begin{proof}
(1) is immediate by the quantifier elimination.

(2) By induction, for every edge $e_{1}e_{2}\in(M/E)^{2}$ that we
want to put, chose a pair of representatives $a_{1},a_{2}\in\mathbb{Q}$
such that the interval $(a_{1},a_{2})$ is disjoint from all the previously
chosen intervals. Let $P$ name the set of points in $D$ outside
of the union of these intervals.
\end{proof}
In particular we can choose $\P$ so that $T_{\P}$ interprets the
random graph.
\begin{rem}
We also observe that naming two small indiscernible sequences at once
can create $\IP$. This time we name sequences which satisfy $\neg xEy$
for any two points $x$ and $y$ in them. So pick any small $I_{0}$.
Let $A=A[I_{0}]=\{t\in M/E:\exists x\in I_{0},xEt\}$. Then $A$ gets
an order $<_{0}$ form $I_{0}$ induced by $<$. Fix $<_{1}$ any
other order on $A$. Then we can find another sequence $I_{1}$ such
that $A[I_{1}]=A$ and the order induced on $A$ by $I_{1}$ is $<_{1}$.
With two linear orders, we can code pseudo-finite arithmetic as in
\cite{ShSi}. In particular we have IP.
\end{rem}

\section{Models with definable types}

Classically,
\begin{fact}
$T$ is stable $\Leftrightarrow$ for every $M\models T$, $\left|S(M)\right|\leq\left|M\right|^{|T|}$
$\Leftrightarrow$ for every $M\models T$, all types over it are
definable $\Leftrightarrow$ there is a saturated $M\models T$ with
all types over it definable.
\end{fact}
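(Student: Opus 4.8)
The four clauses are: (1) $T$ is stable; (2) $|S(M)|\le|M|^{|T|}$ for every $M\models T$; (3) all types over every model of $T$ are definable; (4) some saturated model of $T$ has all types over it definable. This is classical (Shelah) and uses nothing from the rest of the paper. The plan is to prove the cycle (1)$\Rightarrow$(3)$\Rightarrow$(2)$\Rightarrow$(1), then observe that (3)$\Rightarrow$(4) is immediate while (4)$\Rightarrow$(1) runs exactly like (2)$\Rightarrow$(1).

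For (1)$\Rightarrow$(3): given $M\models T$ and $p\in S(M)$, it suffices to show that each $\phi$-type $p\restriction\phi$, $\phi(x,y)\in L$, is definable over $M$. As $T$ is stable, $\phi$ is a stable formula, so $p$ has a unique global nonforking extension $\tilde p$; being invariant over $M$ and definable, its $\phi$-definition may be taken over $M$ (the canonical parameter cutting out $\{b:\phi(x,b)\in\tilde p\}$ lies in $\dcl^{\eq}(M)$), whence $p\restriction\phi$ is definable over $M$. For clause (2) the weaker fact that $p\restriction\phi$ is definable over $\acl^{\eq}(M)$, a set of size $|M|+|T|$, already suffices.

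For (3)$\Rightarrow$(2): a definable $p\in S(M)$ is completely determined by the function assigning to each $\phi(x,y)$ a $\phi$-definition $d_p\phi\in L_{|x|}(M)$. There are $\le|T|$ relevant $\phi$ and, for each, $\le|L(M)|=|M|+|T|$ candidate definitions, so there are at most $(|M|+|T|)^{|T|}=|M|^{|T|}$ definable types over $M$ (using $|M|\ge 2$, $|T|\ge\aleph_0$); by (3) this bounds $|S(M)|$.

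For (2)$\Rightarrow$(1) (and identically (4)$\Rightarrow$(1)): argue contrapositively. If $T$ is unstable some $\phi(x,y)$ has the order property, so for every linear order $I$ there is $(a_i)_{i\in I}$ (with the $a_i$ distinct) such that, for each initial segment $C$ of $I$, the partial type $\{\phi(x,a_i):i\in C\}\cup\{\neg\phi(x,a_i):i\in I\setminus C\}$ is consistent, and distinct $C$ yield distinct $\phi$-types over $\{a_i:i\in I\}$. Fix $\lambda$ with $\lambda^{|T|}=\lambda$ and a linear order $I$ of size $\le\lambda$ with strictly more than $\lambda$ Dedekind cuts (these exist for every infinite $\lambda$ --- e.g. $2^{<\mu}$ ordered lexicographically, for $\mu$ least with $2^{\mu}>\lambda$, which has size $\le\lambda$ and $\ge 2^{\mu}>\lambda$ cuts), and take any $M\models T$ of size $\lambda$ containing such a sequence. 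Then $|S(M)|\ge|S_\phi(M)|>\lambda=|M|^{|T|}$, contradicting (2). The same picture refutes (4): being saturated, $M$ contains a $\phi$-order-property sequence indexed by a linear order with more Dedekind cuts than there are $L(M)$-formulas cutting out an initial-segment-shaped definable set, so some $\phi$-type over $M$ is undefinable. Finally (3)$\Rightarrow$(4) holds once we fix a saturated model of $T$ (these exist, e.g. in any cardinality $\lambda$ with $\lambda=\lambda^{<\lambda}$; or use special models), and (1)$\Rightarrow$(3) makes all of its types definable.

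The main obstacle is the cardinal bookkeeping in the last step: one must choose the index order $I$ (and, in (4), the cardinality of the given saturated model) so that it genuinely carries strictly more Dedekind cuts than the pertinent counting bound --- $|M|^{|T|}$ in (2), and the number of ``cut-shaped'' $L(M)$-formulas in (4) --- permits, and one must treat the edge case of a saturated model of cardinality $<|T|$ by counting the \emph{definable} cuts directly rather than all types. Getting this right is where essentially all the content lies; everything else is formal.
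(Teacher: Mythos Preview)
The paper does not prove this statement: it is recorded as a classical fact (essentially due to Shelah) without any argument, serving only as background for the $\NIP$ discussion that follows. So there is nothing in the paper to compare your proof against.

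Your write-up is the standard argument and is essentially correct. The cycle (1)$\Leftrightarrow$(2)$\Leftrightarrow$(3) is fine, and for (3)$\Rightarrow$(4) you may legitimately invoke the already-established stability of $T$ to guarantee saturated models in ZFC, so the set-theoretic worry you raise is harmless. The one genuinely thin spot is (4)$\Rightarrow$(1) in the edge case $|M|<|T|$: you rightly flag it, but ``count the definable cuts directly'' does not by itself yield a strict inequality, since a $\phi$-ordered sequence inside $M$ has at most $2^{|M|}$ cuts while the naive bound on the number of $M$-definable subsets of $M^{|y|}$ is only $\min(2^{|M|},|T|)$, and neither gives a contradiction. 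This is a known wrinkle in how (4) is usually stated---most textbook formulations take the saturated model of size at least $|T|$---and since the paper is merely quoting the fact, nothing downstream depends on resolving it.
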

We start by observing that if $T$ is $\NIP$, then it has models
of arbitrary size with few types over them.
\begin{prop}
Let $T$ be $\NIP$. For any $\kappa\geq\left|T\right|$ there is
a model $M$ with $|M|=\kappa$ such that $|S(A)|\leq|A|^{|T|}$ for
every $A\subseteq M$. \end{prop}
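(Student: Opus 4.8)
The plan is to build $M$ as an increasing union of an elementary chain $(M_i)_{i<\kappa}$, where at each stage we only realize "few" types, and then check that the resulting model has the desired counting property for all subsets, not just for $M$ itself. First I would fix, for each formula $\phi(x,y)$, a uniform honest definition $\chi_\phi(x,t)$ as provided by Theorem \ref{thm: Uniform honest definitions}; more to the point, by the Shelah--Sauer bound (Fact \ref{fac: [Shelah/Sauer]}) each $\phi$ has finite VC density, so there are $k_\phi, d_\phi < \omega$ with $|S_\phi(A)| \le d_\phi \cdot |A|^{k_\phi}$ for every finite $A$. Summing over the $|T|$ many formulas and using a standard computation, one gets $|S(A)| \le |A|^{|T|}$ whenever $A$ is \emph{infinite} (for finite $A$ the bound may need a harmless constant, but since we want it for all subsets of a $\kappa$-sized model with $\kappa \ge |T|$ infinite, it is enough to handle infinite subsets and note finite ones are trivially bounded by $2^{|A|\cdot|T|}$, or simply restrict attention to $|A|\ge|T|$; in any case the key point is that the Shelah--Sauer counting is already a statement about arbitrary finite parameter sets inside a fixed theory).

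The catch is that $|S(A)| \le |A|^{|T|}$ for all $A$ is in fact automatic from NIP once $|A| \ge |T|$ — this is precisely the uniform VC-density bound. So actually the content of the proposition is just producing a model of size exactly $\kappa$; but that is trivial by Löwenheim--Skolem. Let me reconsider: the real subtlety the authors must intend is that this should hold for \emph{every} $A \subseteq M$ including the requirement that it be realized \emph{inside} the monster with the type count measured as $|S(A)|$, the space of complete types over $A$ in the full language — and here $|S(A)| \le |A|^{|T|}$ is the NIP consequence of Shelah--Sauer applied formula-by-formula: a complete type over $A$ is determined by the tuple of its $\phi$-restrictions, each of which lies in a set of size $\le d_\phi|A|^{k_\phi}$, and $\prod_{\phi \in L} d_\phi|A|^{k_\phi} \le |A|^{|T|}$ when $|A|$ is infinite. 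So the plan is: (1) recall Fact \ref{fac: [Shelah/Sauer]} to get $|S_\phi(A)| \le d_\phi|A|^{k_\phi}$ for each $\phi$; (2) take the product over all $L$-formulas to conclude $|S(A)| \le |A|^{|T|}$ for every infinite $A$, hence for every $A\subseteq M$ once we note finite subsets are absorbed (or just take $|A|$ infinite WLOG since $\kappa$ is infinite and we may enlarge $A$); (3) apply Löwenheim--Skolem to get a model $M \models T$ of cardinality exactly $\kappa$.

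The step I expect to be the main (and really the only) obstacle is the bookkeeping in step (2): one must be careful that a complete type $p \in S(A)$ is genuinely determined by the system $(p\restriction_\phi)_{\phi \in L}$ of its restrictions to each $\phi(x,y)$-formula (true, since any $L$-formula over $A$ is one of the $\phi(x,a)$), and that the cardinal arithmetic $\prod_{\phi\in L}(d_\phi \cdot |A|^{k_\phi}) = |A|^{|T|}$ goes through — this uses $|L| \le |T|$, $|A| \ge \aleph_0$, and $\sum k_\phi$ summed over $|T|$ formulas is still $\le |T| \cdot \aleph_0 = |T|$ in the exponent after the standard rules for infinite products. Everything else is immediate.
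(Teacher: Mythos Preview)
Your proposal rests on a false premise: you assert that ``$|S(A)| \leq |A|^{|T|}$ for all $A$ is in fact automatic from NIP once $|A| \geq |T|$,'' but this is precisely one of the characterizations of \emph{stability} (it is the Fact recalled immediately before the proposition in the paper). The Shelah--Sauer lemma (Fact~\ref{fac: [Shelah/Sauer]}) bounds $|S_\phi(A)|$ polynomially only for \emph{finite} $A$; there is no passage to infinite $A$. Indeed, for a single NIP formula over an infinite set one can have $|S_\phi(A)| = 2^{|A|}$: in DLO with $\phi(x,y)$ the order, the $\phi$-types over $A$ are the Dedekind cuts of $A$, and dense linear orders of cardinality $\lambda$ with $2^\lambda$ cuts exist for every infinite $\lambda$. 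So the proposition is not a triviality plus L\"owenheim--Skolem; one must construct a \emph{specific} model in which cuts (and their higher-arity analogues) are scarce. Your step~(2) --- ``take the product over all $L$-formulas'' --- simply fails because the individual factors are not bounded by $|A|^{k_\phi}$ for infinite $A$.

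The paper's construction is entirely different. In the unstable case there is an $L$-formula defining a linear order; take $I$ an indiscernible sequence indexed by the ordinal $\kappa$ and let $M$ be its Skolem hull. Every $L$-type over $M$ is then determined by a type over $I$ in the expanded language $\widetilde{L} = \{\phi(x,f(\bar y)) : \phi\in L,\ f \text{ an } L^{\mathrm{Sk}}\text{-term}\}$, whose formulas are still NIP. By the alternation bound (Fact~\ref{fac: BB} and Remark~\ref{rem: types over complete sequence are definable}), each $\widetilde{L}$-type over $I$ is definable from the order using finitely many parameters --- equivalently, determined by finitely many cuts in $I$. The crucial point is that a \emph{well-order} of size $\kappa$ has only $\kappa+1$ cuts, so $|S^{\widetilde L}(I)| \leq \kappa^{|T|}$ and hence $|S(M)| \leq \kappa^{|T|}$. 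The choice of index set (well-ordered, hence with few cuts) is doing all the work; this is exactly the ingredient missing from your plan.
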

\begin{proof}
If $T$ is stable then every model of size $\kappa$ works. Otherwise
assume $T$ is unstable and let $I=(a_{\alpha})_{\alpha<\kappa}$
be linearly ordered by $<(x,y)\in L$. Let $T^{\mbox{Sk}}$ be a Skolemization
of $T$, and let $M=\mbox{Sk}(I)$, $\left|M\right|\leq\kappa+\left|T\right|$. 

We show that $S^{L}(M)\leq\kappa^{|T|}$. Consider 
\[
\widetilde{L}:=\{\phi(x,f(\bar{y}))\,:\,\phi\in L\,\mbox{ and \ensuremath{f}is an \ensuremath{L^{\mbox{Sk}}}-definable function}\}.
\]
Notice that every $\psi(x,y)\in\widetilde{L}$ is $NIP$. But then
(by Remark \ref{rem: types over complete sequence are definable})
for every $\psi\in\widetilde{L}$, every $\psi$-type over $I$ is
$<$-definable, in particular $|S^{\widetilde{L}}(I)|\le|I|^{|T|}$. 

Given $p,q\in S^{L}(M)$ choose some $p',q'\in S^{\widetilde{L}}(M)$
with $p\subseteq p',q\subseteq q'$. It is easy to see that $p'|_{I}=q'|_{I}$
$\Rightarrow$ $p=q$ (for any $a\in M$ and $\phi\in L$ we have
$\phi(x,a)\in p$ $\iff\phi(x,f(\bar{b}))\in p'|_{I}$ where $\bar{b}\subseteq I$
and $f(\bar{b})=a$), thus $|S^{L}(M)|\leq|S^{\widetilde{L}}(I)|\leq\kappa^{|T|}$.\end{proof}
\begin{rem}
Slightly elaborating on the argument, we may construct such an $M$
which is in addition gross ($M$ is called \textit{gross} if every
infinite subset definable with parameters from $M$ is of cardinality
$|M|$, see \cite{MR2054800}).
\end{rem}
In general one cannot find a model such that all types over it are
definable (for example, take $RCF$ and add a new constant for an
infinitesimal). However, some interesting $NIP$ theories have models
with all types over them definable.
\begin{example}

\begin{enumerate}
\item $\mathbb{R}$ as a model of $RCF$ (and this is the only model of
$RCF$ with all types definable).
\item In $ACVF$ there are arbitrary large models with all types definable
(maximally complete fields with $\mathbb{R}$ as a value group).
\item $(\mathbb{Z},+,<)$ is a model of Presburger arithmetic with all types
definable (but there are no larger models). 
\item $\left(\mathbb{Q}_{p},+,\times,0,1\right)$ (by \cite{MR953003}).
\end{enumerate}
\end{example}
When looking at a particular example, it is usually much easier to
check that $1$-types are definable, rather than $n$-types, and one
can ask if this is actually the same thing.
\begin{defn}
Let $A$ be a set. We say that it is \emph{$\left(n,m\right)$-stably
embedded} if every subset of $A^{n}$ which can be defined as $\phi(A,a)$
with $\left|a\right|\leq m$, can actually be defined as $\psi(A,b)$
with $b\in A$. We say that it is \emph{uniformly} $\left(n,m\right)$-stably
embedded if $\psi$ can be chosen depending just of $\phi$ (and not
on $a$). A compactness argument shows that for a definable set $A$,
it is $\left(n,m\right)$-stably embedded if and only if it is uniformly
$\left(n,m\right)$-stably embedded. Obviously, $\left(\infty,n\right)$-stable
embeddedness is equivalent to definability of $n$-types over $A$.
\end{defn}
Of course, $\left(n,m\right)$-stable embeddedness implies $\left(n',m'\right)$-stable
embeddedness for $n'\leq n,m'\leq m$. 
\begin{prop}
\label{prop: (infty,1) -> (1,infty)} Let $T$ be $\NIP$ and assume
that $M$ is $(\infty,n)$-stably embedded. Then it is $\left(n,\infty\right)$-stably
embedded.\end{prop}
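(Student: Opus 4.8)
\textbf{Plan of proof for Proposition \ref{prop: (infty,1) -> (1,infty)}.}

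The statement to prove is a duality: if $M$ is $(\infty,n)$-stably embedded — i.e.\ every $n$-type over $M$ is definable — then $M$ is $(n,\infty)$-stably embedded, i.e.\ every externally definable subset of $M^n$ (defined by a formula with an arbitrary tuple of parameters) is in fact internally definable with parameters from $M$. Notice the hypothesis controls sets $\phi(M^n, a)$ with $|a|\leq m$ for \emph{all} $m$ but with the object variable of length $n$; the conclusion reverses the roles. The plan is to fix a formula $\phi(x,y)$ with $|x| = n$ and an arbitrary parameter tuple $a$ (possibly infinite, handled by compactness / reducing to finite $a$), and show $\phi(M^n, a)$ is definable over $M$. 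First I would pass to a $|M|^+$-saturated elementary extension $(M',\dots)\succ M$ containing $a$; the point is that definability over $M$ is witnessed already there. The key idea is to look at the map sending each $c\in M^n$ to the formula's truth value, and instead study the \emph{dual} family: for each $c \in M^n$, consider the set $\{\,a'\,:\,\models\phi(c,a')\,\}$, which is a $\phi^*$-definable subset of the parameter space, where $\phi^*(y,x) = \phi(x,y)$.

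The heart of the argument should run through honest definitions or through the machinery of Fact \ref{fac: honest definitions exist} / Theorem \ref{thm: Uniform honest definitions}, combined with the $(p,k)$-theorem, exactly as in Section 1: the externally definable set $X = \phi(M^n,a) = \phi(x,a)\cap M^n$ has an honest definition $\theta(x,b)$ with $b$ in a saturated extension and $\theta(M'^n,b)\subseteq \phi(M'^n,a)$. What remains is to replace $b$ by a tuple from $M$. Here is where the hypothesis $(\infty,n)$-stable embeddedness enters: the honest definition can be arranged (by Theorem \ref{thm: Uniform honest definitions}) to be of the uniform form $\chi(x,t) = \bigvee_{i\leq N}\theta(x,t_i)$, and one analyzes, for the formula $\theta(x,t)$, the trace on $M^n$ of the parameter-dual family. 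The condition ``$c\in M^n$ and $\models\theta(c,b)$'' determines, as $b$ varies, a subset of the $t$-space; dually, thinking of it as a subset of $M^n$ indexed by parameters, one wants the $\phi$-type of $a$ over $M^n$ to be determined by finitely much data living \emph{inside} $M$. Since every $n$-type over $M$ is definable, each realized $\phi(x,y)$-type $\{c\in M^n : \models\phi(c,a)\}$-style object, viewed correctly, is coded by a formula over $M$; I would use the $(p,k)$-theorem / VC-density bound (Fact \ref{fac: [Shelah/Sauer]}, Fact \ref{Fac: (p,k)-theorem}) to see that only boundedly many such traces occur and that they assemble into a single $M$-definable set covering $\phi(M^n,a)$ from inside while the honest-definition inclusion bounds it from outside; equality on $M^n$ then follows.

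\textbf{Main obstacle.} The delicate point is the bookkeeping of which variable block plays the role of ``object'' and which plays ``parameter'' when invoking the honest definition and the $(p,k)$-theorem: the hypothesis gives definability of types in the $x$ (length $n$) variable over $M$, whereas the conclusion is about definability of a subset of $M^n$ cut out by a formula whose \emph{parameters} are outside $M$. Making these match requires working with $\phi^*(y,x)$ and its honest definition, and checking that the uniform honest definition $\theta(x,t)$ produced for $\phi$ has $|t|$ controlled so that $\theta$-types over $M$ (in the $x$-variable of length $n$) fall under the $(\infty,n)$-hypothesis — i.e.\ that $\theta$ keeps $|x| = n$, which it does by construction in Theorem \ref{thm: Uniform honest definitions}. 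The second subtlety is descending from the saturated extension $M'$ back to $M$: one must verify that the defining formula with parameters $b\in M'$ can be chosen with $b\in M$, which is precisely where $(\infty,n)$-stable embeddedness is used — the externally definable set $\theta(M^n,b)$, being definable over a bigger model, corresponds to a type in $n$ variables that is definable over $M$, hence is internally definable. Once both inclusions (internal $\subseteq$ from the definable type, and the honest-definition $\subseteq\phi$) are in hand and the $(p,k)$-theorem provides the finite union, equality on $M^n$ is immediate.
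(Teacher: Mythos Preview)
Your plan has a genuine gap at exactly the point you flag as ``the second subtlety''. The honest definition $\theta(x,t)$ keeps $|x|=n$, yes, but the parameter $b\in M'$ has length $|t|$, which is not bounded by $n$ (it depends on the VC-dimension of $\phi$ and on the $(p,k)$-bound $N$). So $\theta(M^{n},b)$ is an externally definable subset of $M^{n}$ cut out by a parameter of length $|t|$, and the hypothesis $(\infty,n)$ says nothing about such sets: it only handles parameters of length $\leq n$. Your sentence ``corresponds to a type in $n$ variables that is definable over $M$'' does not identify any actual $n$-type to which the hypothesis applies; the only natural candidate would be $\tp(b/M)$, but that is a $|t|$-type. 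In short, the honest-definition/$\left(p,k\right)$ machinery replaces one external parameter $a$ by another external parameter $b$ without shrinking its length, so you are no closer to internal definability than when you started.

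The paper's argument is entirely different and does not use honest definitions at all. It proceeds via an heir/coheir duality: $(\infty,n)$-stable embeddedness is the same as definability of every $p\in S_{n}(M)$, hence each such $p$ has a unique global heir. One then shows (using Fact \ref{fac: MS determines invariant type}) that in an $\NIP$ theory a type with a unique heir has a unique coheir: if $p_{1}$ is any coheir and $(a_{i})_{i<\omega}$ a Morley sequence in it, then $\tp(a_{0}/a_{1}\ldots a_{n}M)$ is an heir of $p$, so is determined by the unique heir $p'$, which pins down $p_{1}^{(\omega)}|_{M}$ and hence $p_{1}$. Finally, a direct compactness argument shows that ``every $p\in S_{n}(M)$ has a unique coheir'' is equivalent to $(n,\infty)$-stable embeddedness: given $\phi(x,c)$ with $|x|=n$, any $p\in S_{n}(M)$ finitely satisfiable in $\phi(M^{n},c)$ cannot also be finitely satisfiable in $\neg\phi(M^{n},c)$, so some $\psi_{p}\in p$ over $M$ implies $\phi(x,c)$ on $M^{n}$; compactness gives a finite disjunction. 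This is where the real work happens, and it is the step your outline is missing.
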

\begin{proof}
By definability, every type $p\in S_{n}(M)$ has a unique heir.

Claim 1: If $p\in S_{n}(M)$ has a unique heir, then it has a unique
coheir.

Let $p'(x)$ be the unique global heir of $p$. Let $p_{1}(x)$ be
a global coheir of $p$, and $(a_{i})_{i<\omega}$ a Morley sequence
in it over $M$. Given $\bar{m}\in M$ and noticing that $tp(a_{0}/a_{1}...a_{n}M)$
is an heir over $M$ (so is contained in a global heir as $M\models T$)
we have that $\models\phi(a_{0},...,a_{n},\bar{m})$ if and only if
$\phi(x,a_{1}...a_{n}\bar{m})\in p'(x)$. Thus by Fact \ref{fac: MS determines invariant type},
$p$ has a unique global coheir.

Claim 2: Every $p\in S_{n}(A)$ has a unique coheir $\Leftrightarrow$
$A$ is $(n,\infty)$-stably embedded.

$\Rightarrow$: Let $\phi(x,c)\in L(\mathbb{M})$ and consider $p(x)\in S_{n}(A)$
finitely satisfiable in $\phi(x,c)\cap A$. If it was finitely satisfiable
in $\neg\phi(x,c)\cap A$ as well, then $p$ would have two coheirs,
thus there is some $\psi_{p}(x)\in p(x)$ with $\psi_{p}(x)\rightarrow^{A}\phi(x,c)$.
By compactness we have $\bigvee\psi_{p_{i}}(x)\leftrightarrow^{A}\phi(x,c)$
for finitely many $p_{i}$'s. 

$\Leftarrow$: Let $p_{1},p_{2}$ be two global coheirs of $p\in S_{n}(A)$,
and assume that $\phi(x,a)\in p_{1},\neg\psi(x,a)\in p_{2}$. Let
$\psi(x)\in L(A)$ be such that $\psi(A^{n})=\phi(A^{n},a)$. It follows
that $\psi(x)\in p$. But this implies that $p_{2}$ cannot be a coheir
as $\psi(x)\land\neg\phi(x,a)$ is not realized in $A$.
\end{proof}
And so it is natural to ask whether $(\infty,1)$-stable embeddedness
of $M$ implies $\left(\infty,n\right)$-stable embeddedness. The
answer is yes in stable theories, for the obvious reason, and yes
in $o$-minimal theories, where by a theorem of Marker and Steinhorn
\cite{MR1264974}, $\left(1,1\right)\rightarrow\left(\infty,\infty\right)$
for models. However, we show in the next section that this is not
true in $\NIP$ theories in general. The question remains open for
$C$-minimal theories.

\subsection{Example of $\left(\infty,1\right)\not\rightarrow\left(\infty,m\right)$}

\subsubsection{General construction}

Start with a theory $T$ in a language $L$ containing an equivalence
relation $E(x,y)$. Assume $T$ has a model $M_{0}$ composed of $\omega$-many
$E$-equivalence classes, each one finite of increasing sizes. So
that any model $M$ of $T$ contains $M_{0}$ as a sub-model and all
the $E$-classes disjoint from $M_{0}$ are infinite.

We consider the language $L'$ defined as follows:
\begin{itemize}
\item For each relation $R(x_{1},...,x_{n})$ in $L$, $L'$ contains a
relation $R'(x_{1},y_{1},...,x_{n},y_{n})$.
\item Also $L'$ contains an equivalence relation $\tilde{E}(u,v)$, a binary
relation $S(u,v)$ and a quaternary relation $U(u_{1},v_{1},u_{2},v_{2})$.
The relation $S$ will code a graph and $U$ will be an equivalence
relation on $S$-edges. 
\end{itemize}
We build an $L'$ structure $N_{0}$ as follows:

$N_{0}$ has $\omega$-many $\tilde{E}$-equivalence classes, corresponding
to the $E$-equivalence classes of $M_{0}$. Let $\mathfrak{e}$ be
an $E$-class, and let $n$ be its size. Then the corresponding $\tilde{E}$
class $\tilde{\mathfrak{e}}$ in $N_{0}$ is a finite regular graph,
with $S$ as the edge relation, of degree $n$ (every vertex has degree
$n$) and with no cycles of length $\leq n$ (such graphs exist, see
e.g. \cite[III.1, Theorem 1.4']{MR506522}). The predicate $U$ is
interpreted as an equivalence relation between edges so that every
vertex is adjacent to exactly one edge from each equivalence class.
We fix a bijection $\pi$ between $U$-equivalence classes and elements
of the $E$-class $\mathfrak{e}$. This being done, for each relation
$R(x_{1},...,x_{n})$ we say that $R'(x_{1},y_{1},...,x_{n},y_{n})$
holds in $N_{0}$ if $\bigwedge_{i\leq n}S(x_{i},y_{i})$ and if $R(\pi(x_{1},y_{1}),...,\pi(x_{n},y_{n}))$
holds in $M_{0}$.

Note that any model of $T'=Th(N_{0})$ contains $N_{0}$ as submodel
and its $\tilde{E}$-classes not in $N_{0}$ are infinite and composed
of disjoint unions of trees with infinite branching. So the graph
structure does not interact in any way with the structure coming from
the $R'$ relations.

Given a model of $T'$ we can recover a model of $M_{0}$ by looking
at $U$-equivalence classes and we obtain in this way every model
of $T$. So there are at least as many 2-types over $N_{0}$ as there
are 1-types over $M_{0}$. However, the non-realized 1-types over
$N_{0}$ correspond to imaginary types of non-realized $E$-classes
over $M_{0}$. See below. \\

Assume that $L$ contains a constant for every element of $M_{0}$.
Let $N\models T'$ and denote by $M$ the model of $T$ which we get
from $N$. We build a language $L''\supset L'$:
\begin{itemize}
\item We add a constant for every element of $N_{0}$.
\item For every $n\in\omega$, we add a relation $d_{n}(u,v)$ which holds
if and only if $u$ and $v$ are at distance $n$ (in the sense of
the shortest path in graph $S(u,v)$).
\item For every $\emptyset$-definable set $\phi(x_{1},...,x_{n},y_{1},...,y_{m})$
of $M_{0}$ which is $E$-congruent with respect to the variables
$x_{i}$ (\textit{i.e.}, for $a_{i}Ea'_{i}$ and $b_{i}$'s, we have
\\
$\phi(a_{1},...,a_{n},b_{1},...,b_{m})\leftrightarrow\phi(a'_{1},...,a'_{n},b_{1},...,b_{m})$)
we add a predicate \\
$W_{\phi}(x_{1},...,x_{n},y_{1},z_{1},...,y_{m},z_{m})$ which we
interpret as:\\
$N\models W_{\phi}(a_{1},...,a_{n},b_{1},c_{1},...,b_{m},c_{m})$
if and only if $\bigwedge_{i\leq m}S(b_{i},c_{i})$ and for some $e_{1},...,e_{n}\in M$
with $e_{i}$ in the $E$-class corresponding to the $\tilde{E}$-class
of $a_{i}$, we have $M\models\phi(e_{1},...,e_{n},\pi(b_{1},c_{1}),...,\pi(b_{m},c_{m}))$.\end{itemize}
\begin{claim}
If $T$ eliminates quantifiers in $L$, then $T'$ eliminates quantifiers
in $L''$. \end{claim}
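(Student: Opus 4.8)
The plan is to establish quantifier elimination for $T'$ in the language $L''$ by a back-and-forth argument, exploiting the fact that the structure on $\tilde{E}$-classes decomposes into two essentially independent parts: the tree/graph structure given by $S$ (together with the distance predicates $d_n$), and the $M_0$-structure transported along the bijections $\pi$ between $U$-classes and elements of $E$-classes. First I would recall that $T$ eliminates quantifiers in $L$, and that every model $N\models T'$ contains $N_0$ as a submodel with all new $\tilde{E}$-classes being disjoint unions of infinitely-branching trees; correspondingly the model $M$ of $T$ recovered from $N$ has all new $E$-classes infinite. The key point making QE possible is that the $W_\phi$ predicates were added precisely to make the relationship between an edge-pair $(b,c)$ (with $S(b,c)$) and the element $\pi(b,c)$ of $M$ quantifier-free expressible: any $L$-formula $\phi$ over $M_0$ that is $E$-congruent in its first $n$ variables lifts to a quantifier-free $W_\phi$-statement over $N$.

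The main work is the back-and-forth. I would take two $\aleph_0$-saturated (or sufficiently saturated) models $N_1, N_2 \models T'$ and a finite partial $L''$-isomorphism $f$ between finitely generated (equivalently finite, since $L''$ is relational apart from constants) substructures, and show it extends to include any new element $a \in N_1$. There are several cases depending on how $a$ sits relative to the domain of $f$: (i) $a$ lies in an $\tilde{E}$-class already met by the domain, in which case I must choose an image in the corresponding $\tilde{E}$-class of $N_2$ respecting the graph-distances $d_n$ to existing points, the $S$ and $U$ relations, and the $W_\phi$-type; (ii) $a$ lies in a fresh $\tilde{E}$-class, which is even easier since fresh classes are infinitely-branching trees and any two such are locally isomorphic, and one just needs to match up which $E$-class of $M_0$ (if it is an old one — but fresh $\tilde{E}$-classes correspond to fresh $E$-classes, which are infinite, so there is total freedom). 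For case (i), the graph part: since every new $\tilde{E}$-class is an infinitely-branching tree with no short cycles, given the distances from $a$ to the finitely many already-placed vertices in its class, one can realize the same distance pattern in $N_2$ by saturation — here one uses that in a tree the quantifier-free type over a finite set is determined by pairwise distances and adjacency data, and infinite branching guarantees room. Simultaneously one must handle the $U$-classes: $U$ is an equivalence relation on $S$-edges, every vertex meets exactly one edge per $U$-class, and via $\pi$ the $U$-classes incident to a vertex are indexed by $M$; for a fresh $\tilde{E}$-class this indexing is by a fresh infinite $E$-class of $M$ so is unconstrained, and for an old class one transports the $M_0$-structure using the $W_\phi$ predicates and QE for $T$.

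The step I expect to be the main obstacle is verifying that the $W_\phi$ predicates, together with the $d_n$'s and constants, genuinely suffice — i.e. that when we have matched the quantifier-free $L''$-type of a new element $a$ with a candidate $a'$, no $L'$-formula with quantifiers can distinguish them. Concretely, an existential $L'$-formula $\exists u\, \theta(u, \bar{v})$ evaluated at our tuple could assert the existence of a vertex/edge in some $\tilde{E}$-class standing in prescribed $S, U, R'$ relations; I must show such a witness can always be found in $N_2$ whenever it exists in $N_1$, and this reduces, on the $R'$-side, to an existential statement about $M$ which by QE for $T$ is quantifier-free over $M_0$ and hence captured by some $W_\phi$ — but one has to be careful that the relevant $\phi$ is indeed $E$-congruent in the right variables, which holds because $R'$ only ever refers to $M$-elements through $\pi$ applied to edges, and $\pi$ is constant on $U$-classes which is exactly the $E$-congruence condition. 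On the graph side the witness is found by saturation and the homogeneity of infinitely-branching trees. I would organize the argument so that the graph part and the $M$-part of each extension step are handled independently, invoking QE for $T$ as a black box for the latter, and I would remark that the separation of variables into $x_i$ (the "$M$-coordinates", paired as $(x_i,y_i)$ to encode edges) and the pure graph coordinates $u,v$ is what makes this decomposition clean.
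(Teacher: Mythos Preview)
Your proposal is correct and takes exactly the approach the paper intends: the paper's entire proof is the single line ``By easy back-and-forth,'' and your sketch is precisely the back-and-forth it has in mind, decomposing each extension step into the tree/graph part (handled by the $d_n$'s, infinite branching, and saturation) and the $M$-part (handled by the $W_\phi$'s together with QE for $T$). One small slip worth noting: since $L''$ names every element of $N_0$ by a constant, finitely generated substructures are not finite but rather $N_0$ together with finitely many extra points, so your case analysis really only concerns elements of \emph{new} (infinite) $\tilde E$-classes---the ``old'' classes are already rigidified and need no separate treatment.
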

\begin{proof}
By easy back-and-forth. \end{proof}
\begin{cor}
If $T$ is NIP, then $T'$ is NIP. 
\end{cor}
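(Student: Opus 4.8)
The plan is to leverage the quantifier elimination result from the preceding claim and reduce the NIP-ness of $T'$ to that of $T$ by analyzing the finitely many ``atomic'' ingredients of $L''$. Since $T'$ eliminates quantifiers in $L''$, every $L'$-formula is equivalent modulo $T'$ to a boolean combination of $L''$-atomic formulas, and by the standard reduction of NIP to the case of partitioned formulas $\phi(x;y)$, it suffices to show that each $L''$-atomic formula, viewed with any partition of its variables, is NIP. The atomic $L''$-formulas fall into three groups: (i) the graph-theoretic ones ($\tilde E$, $S$, $U$, $d_n$, and equalities/constants), (ii) the pulled-back relations $R'$ and the $W_\phi$ predicates, which encode the structure of $M_0$ (equivalently $M$, equivalently $T$), and (iii) formulas mixing both. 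The key structural observation, already emphasized in the text, is that the graph structure and the $M$-structure ``do not interact'': the $\tilde E$-classes carry a pure forest/regular-graph structure, and the $R'$, $W_\phi$ relations only depend on which $\tilde E$-class a point lies in and which $U$-class (equivalently, which element of $M$) an $S$-edge represents.

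First I would handle group (i). The graph on each $\tilde E$-class not meeting $N_0$ is a disjoint union of infinitely-branching trees, so $S$ together with the distance predicates $d_n$ defines, on the infinite classes, essentially the edge relation of a forest; forests (indeed, any structure of bounded local finiteness, or more simply any tree with the natural relations) are well-known to be stable, hence NIP. The relations $\tilde E$ and $U$ are equivalence relations, which are stable; adding constantly many named elements from $N_0$ preserves NIP. So the reduct of $T'$ to the graph language is NIP by a direct argument (or by citing that it is a reduct of a tree-like stable structure).

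Next, group (ii): the predicates $R'(x_1,y_1,\dots,x_n,y_n)$ and $W_\phi(\bar x,\bar y,\bar z)$ are, up to the conjuncts $\bigwedge S(x_i,y_i)$ (which belong to group (i)), pullbacks along the definable-in-$M_0$ maps ``$\tilde E$-class of $x_i$'' and $\pi(y_j,z_j)$ of $L$-formulas, resp.\ $E$-congruent $L$-formulas, of $M$. Since $M$ (a model of $T$) is NIP by hypothesis, and NIP is preserved under taking reducts and under interpreting a structure (here: $M$ is interpreted in $N$ via $U$-classes, with the $\tilde E$-classes mapping to $E$-classes), each such pulled-back formula is NIP: an infinite shattering configuration for $W_\phi$ in $N$ would push forward to one for $\phi$ in $M$. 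The main thing to verify carefully here is the bookkeeping of how the partition of variables in $W_\phi$ corresponds to a partition in $\phi$ and that the ``object'' variables $x_i$ (ranging over $\tilde E$-classes) versus the ``edge'' variables $(y_j,z_j)$ (ranging over $S$-edges, i.e.\ elements of $M$) are tracked consistently; but since $\phi$ was NIP for \emph{every} partition of its variables, this causes no trouble.

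The real work, and the step I expect to be the main obstacle, is group (iii) and more generally the claim that NIP of a boolean combination of formulas from two ``independent'' reducts follows from NIP of each. This is \emph{not} true for arbitrary structures, but it is true here because the two reducts are, in a precise sense, orthogonal/free: one can make this rigorous either (a) by checking directly with an indiscernible sequence that alternation in any $L''$-atomic formula along an indiscernible sequence is bounded, using that the sequence's ``graph part'' and ``$M$-part'' behave independently, or (b) by exhibiting $N$ as (bi-)interpretable with, or embeddable into, a product-like NIP structure: concretely, $N$ is coordinatized by the forest structure together with a copy of $M$, and the whole of $N$ is interpretable in the disjoint union $M \sqcup (\text{forest})$ with no added relations across the two sorts — and a disjoint union of two NIP structures is NIP (this last fact is standard: it follows from preservation of NIP under interpretations in a fixed NIP structure, or directly by a Feferman--Vaught type argument on alternations). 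I would phrase the final proof as: by the quantifier-elimination Claim, reduce to $L''$-atomic $\phi(x;y)$; observe $T'$ is interpretable in the disjoint union of $M\models T$ (contributing $R'$, $W_\phi$ via $U$-classes and $\tilde E$-classes) and a tree structure (contributing $S$, $d_n$, $\tilde E$, $U$); both are NIP (the latter because trees are stable); hence their disjoint union is NIP; hence $T'$, being interpretable in it, is NIP. The only genuinely delicate point is justifying the interpretation cleanly — in particular that the map from $\tilde E$-classes to $E$-classes and the map $\pi$ from $U$-classes (equivalently $S$-edges up to $U$) to elements of $M$ are themselves interpretable data — which is exactly what the construction of $N_0$ and the back-and-forth in the quantifier-elimination proof provide.
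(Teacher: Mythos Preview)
The paper gives no proof for this corollary; it is left as an immediate consequence of the quantifier-elimination claim. Your plan for groups (i) and (ii) is correct and is exactly the intended unpacking: by QE every $L'$-formula is a boolean combination of $L''$-atoms; the graph-theoretic atoms $\tilde E,S,U,d_n$ are NIP (indeed stable, since on the non-standard $\tilde E$-classes the structure is a forest with distance predicates), and each $R'$ or $W_\phi$ is NIP because a shattering family pushes forward along $\pi$ and the $\tilde E\mapsto E$ map to a shattering family for an $L$-formula in a model of $T$ (in $T^{\eq}$ for $W_\phi$), contradicting NIP of $T$.

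Your group (iii), however, is built on a misconception. You write that ``NIP of a boolean combination of formulas from two `independent' reducts follows from NIP of each\ldots\ This is not true for arbitrary structures.'' But it \emph{is} true, unconditionally: if formulas $\phi_1(x;y),\ldots,\phi_k(x;y)$ are each NIP in a theory, then every boolean combination of them is NIP in that theory --- this is the standard closure fact and requires no orthogonality. The point you may be conflating it with is that if two \emph{reducts} of a structure are each NIP as theories, the full structure need not be; but that is irrelevant here, because in (i) and (ii) you are checking NIP of each atomic formula, and NIP of an atomic formula depends only on the family of sets it cuts out, which is unchanged by passing to an expansion. So your verifications in (i) and (ii) already establish NIP of each atom \emph{in $T'$}, and closure under boolean combinations finishes the proof immediately. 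The disjoint-union/interpretation detour is unnecessary (and, as you note yourself, would need extra care: the bijection $\pi$ linking $U$-classes to elements of $M$ is genuine additional data, so the ``interpretation in $M\sqcup(\text{forest})$'' is not literally correct without adding that map to the interpreting structure).
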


\begin{cor}
Assume that all (imaginary) types of a new $E$ class in $M_{0}$
are definable, then all 1-types over $N_{0}$ are definable. 
\end{cor}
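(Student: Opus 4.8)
The plan is to combine the quantifier elimination for $T'$ in the language $L''$ (the previous claim) with the hypothesis on definability of imaginary $E$-class types over $M_0$. First I would fix $N \succ N_0$ and a $1$-type $p(u) \in S_1(N_0)$; I want to produce $L'$-definitions for each formula in $p$. By the quantifier elimination, it is enough to handle atomic and negated atomic $L''$-formulas in $u$ with parameters from $N_0$, since a Boolean combination of definable conditions is definable. So the task reduces to: for each predicate symbol $\sigma$ of $L''$ and each tuple $\bar b$ from $N_0$, show that $\{u : N \models \sigma(u,\bar b)\}$ is defined over $N_0$ by an $L'(N_0)$-formula, uniformly enough that the family of such definitions is itself a type over $N_0$.

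Next I would go through the predicates. The graph part ($\tilde E$, $S$, $U$, the distance predicates $d_n$, and the constants of $N_0$) only ever sees whether $u$ lies in some $\tilde E$-class of $N_0$ or in a ``new'' class; on a new class the graph is a disjoint union of infinitely-branching trees, so all the $S$-, $U$-, $d_n$-relations to $N_0$-parameters are false and $\tilde E(u,\bar b)$ is false — these are trivially $L'(N_0)$-definable (in fact they are just ``$u \notin N_0$''-type conditions expressed via $\neg \tilde E$ with the finitely many classes of $N_0$, which is fine because each old class is finite and named by constants). The genuinely interesting predicates are the $W_\phi$'s, which is where the hypothesis enters: $W_\phi(u_1,\dots;\,\bar b,\bar c)$ with the $y$-$z$ pairs among the parameters records, via the bijections $\pi$, whether $M_0 \models \phi(e_1,\dots,e_n,\pi(\bar b,\bar c),\dots)$ for the $E$-class of $u_i$. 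Fixing the new class containing $u$ to correspond to a (new, imaginary) $E$-class $\mathfrak e$ of $M_0$, the value of $W_\phi$ is governed by the imaginary type of $\mathfrak e$ over the (realized, standard) $E$-classes of $M_0$ — and by hypothesis that type is definable, so there is an $L(M_0^{\mathrm{eq}})$-formula in the remaining variables that computes it. Pulling this definition back through $\pi$ and the trivial coding of standard $E$-classes by constants of $N_0$, one gets an $L'(N_0)$-formula defining the $W_\phi$-part of $p$.

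The one subtlety I would flag as the main obstacle is the bookkeeping needed to make sure the resulting collection of $L'(N_0)$-formulas is actually a complete and consistent type, i.e.\ that it defines $p$ and not merely each of $p$'s pieces: one must check that specifying, for a fixed choice of ``which new $\tilde E$-class $u$ lives in'' together with the definitions of all the $W_\phi$-relations (which encode the imaginary $E$-type), really pins down $p$ up to the back-and-forth from the quantifier-elimination claim. This is where one invokes that $T'$ eliminates quantifiers in $L''$: two elements of $N$ with the same atomic $L''$-type over $N_0$ have the same $L''$-type, hence the same $L'$-type, over $N_0$; so the union of the $L'(N_0)$-definitions I have produced, interpreted in $N$, isolates a single complete $L'$-type, which must be $p$. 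After that the argument is just assembling the pieces; no further combinatorics about the Ramsey-type graphs is needed, since the trees on new classes contribute only negations. Hence all $1$-types over $N_0$ are definable.
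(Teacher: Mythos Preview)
Your proposal is correct and follows exactly the route the paper intends: the paper states the corollary without proof, relying on the quantifier elimination in $L''$ together with the earlier remark that non-realized $1$-types over $N_0$ correspond to imaginary types of new $E$-classes over $M_0$; your argument is precisely the unpacking of that correspondence.

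Two small comments. First, you write ``the finitely many classes of $N_0$'': there are in fact infinitely many $\tilde E$-classes in $N_0$ (one for each $n\in\omega$), each finite. This does not matter for the argument, since for definability you only need that for each fixed parameter $b\in N_0$ the set $\{b:\tilde E(u,b)\in p\}$ is $N_0$-definable (it is empty for non-realized $p$), not that ``$u\notin N_0$'' is a single formula. Second, the ``subtlety'' you flag in the last paragraph is not an obstacle: definability of a type is a formula-by-formula condition, and once you have produced an $N_0$-definable $d_p\phi(\bar y)$ for each $\phi$ there is nothing further to check about coherence---the definitions automatically describe $p$ because they were extracted from $p$. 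The invocation of quantifier elimination is only needed to reduce to atomic $L''$-formulas, not to reassemble the pieces afterwards.
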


\subsubsection{An example of $M_{0}$ with $\NIP$}

Let $L_{0}=\{\leq,E\}$. We build an $L_{0}$-structure $M_{0}$ as
follows:
\begin{itemize}
\item The reduct to $\leq$ is a binary tree with a root (every element
has exactly two immediate successors, there is a unique element with
no predecessor). The tree is of height $\omega$, so every element
is at finite distance from the root.
\item Two elements are $E$-equivalent if they are at the same distance
from the root. 
\end{itemize}
This theory eliminates quantifiers in the language $L$ obtained from
$L_{0}$ by adding a constant for every element of $M_{0}$, a binary
function symbol $\wedge$ interpreted as $x\wedge y$ is the maximal
element $z$ such that $z\leq x$ and $z\leq y$ and for each $n$
a predicate $d_{n}(x,y)$ saying that the difference between the heights
of $x$ and $y$ is $n$. Note that those predicates are $E$-congruent.

Clearly, $M_{0}$ is NIP, there is a unique imaginary type of a new
$E$-class over $M_{0}$ and this type is definable. However, not
all types over $M_{0}$ are definable.

So we obtain the required counter-example. 
\begin{rem}
Together with Proposition \ref{prop: (infty,1) -> (1,infty)} it follows
that also $\left(1,\infty\right)\not\rightarrow(n,\infty)$ in a general
$\NIP$ theory. Another example due to Hrushovski witnessing this
is presented in Pillay \cite{MR2830421} -- a proper dense elementary
pair of $ACVF$'s $F_{1}\prec F_{2}$ with the same residue field
and value group. Then $F_{1}$ is $(1,\infty)$-stably embedded in
$F_{2}$, but if $a\in F_{2}\setminus F_{1}$, then the function taking
$x\in F_{1}$ to $v(x-a)$ is not $F_{1}$-definable. 
\end{rem}

\subsection{Some positive results}

In \cite{MR2830421} Pillay had established the following.
\begin{fact}
Let $A$ be a definable subset of $M$. Assume that $A_{\ind}$ is
rosy, $M$ is $\NIP$ over $A$ and $A$ is $\left(1,\infty\right)$-stably
embedded. Then $A$ is stably embedded.
\end{fact}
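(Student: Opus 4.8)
The plan is to mimic, at the level of the pair $(M,A)$, the argument that proves $(\infty,1)$-stable embeddedness implies $(n,\infty)$-stable embeddedness (Proposition \ref{prop: (infty,1) -> (1,infty)}), but now inside the expansion by a predicate rather than the home sort, using the boundedness machinery. More precisely, I would work with a $|T|^{+}$-saturated pair $(N,P)\succ(M,A)$ (so $P(N)=A'$ is the corresponding elementary extension of $A$), and try to show that the structure $A_{\ind(L_{\P})}$ on the predicate is such that the $L_{\P}$-type of a tuple $a\in N$ over $A'$ is determined by the quantifier-free $\P$-induced data $A'_{[a]}$. The hypotheses give us three inputs to feed in: $A_{\ind}$ is rosy (so it has the usual good behaviour of thorn-forking and, in particular, by Pillay's earlier work, being $1$-stably embedded forces full stable embeddedness of $A_{\ind}$ as a structure in its own right); $M$ is $\NIP$ over $A$ (which is what lets us run the Baldwin--Benedikt / honest-definition style of argument in the pair without IP blowing up); and $(1,\infty)$-stable embeddedness of $A$ in $M$.

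First I would reduce to a back-and-forth statement: it suffices to show that if $a,a'\in N$ have $A'_{[a]}\equiv A'_{[a']}$ (as structures, i.e. the same $L$-induced-on-$A'$ data relative to the parameters), then $\tp_{L_{\P}}(a/A')=\tp_{L_{\P}}(a'/A')$, and then one extends one element at a time. The key step of the back-and-forth, as in Case 2 of the proof of Lemma \ref{lem: formulas are D-bounded} and of the distal boundedness theorem, is: given such $a,a'$ and a new element $b\in N$, after possibly absorbing finitely many points of $A'$ using the $b\in A'$ case, we may assume $\tp_L(ab/A')$ is definable over $a$ (this is exactly where we use that $A$ is $(1,\infty)$-stably embedded — so that $1$-types, hence by iteration the relevant finitary types of tuples from $A$, over a model $A'$ are definable, and where we use rosiness to upgrade $1$-stable embeddedness of $A_{\ind}$ to stable embeddedness à la Pillay). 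Then we pick $b'\in N$ with $\tp_L(a'b')=\tp_L(ab)$, note that $\tp_L(a'b'/A')$ is definable over $a'$ by the \emph{same} scheme, and check that $A'_{[ab]}\equiv A'_{[a'b']}$: any $\P$-bounded (equivalently, since $A$ is stably embedded, honest-definition-approximated) $L_{\P}$-formula about $ab$ with quantifiers restricted to $\P$ can, using the definability of $\tp_L(ab/A')$ over $a$ and the fact that we already know the truth values of $\P$ on $ab$, be rewritten as an $L_{\P}$-formula purely about the $A'$-induced data of $a$, which agrees with that of $a'$ by hypothesis; then reverse the rewriting using the scheme for $a'$ and the choice of $b'$. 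This is precisely the three-step $\Leftrightarrow$ computation in the proof of Lemma \ref{lem: formulas are D-bounded}, transported to the predicate setting.

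Having established that the $L_{\P}$-type of a tuple over the predicate is determined by its induced-structure data, I would then conclude $(\infty,\infty)$-stable embeddedness of $A$ in $M$ — i.e. all types over (the model replacing the definable set) $A$ are definable, hence $A$ is stably embedded — exactly as in the $\Leftarrow$ direction and Claim 2 of Proposition \ref{prop: (infty,1) -> (1,infty)}: a global coheir of a type over $A$ is pinned down by its restriction to $A$ once we know the induced structure is rich enough, and by $\NIP$ (Fact \ref{fac: MS determines invariant type}) uniqueness of the coheir is equivalent to definability / stable embeddedness. The main obstacle, as I see it, is the step that upgrades $(1,\infty)$-stable embeddedness of $A$ in $M$ to the definability of $\tp_L(ab/A')$ over $a$ for a tuple $b$: the hypothesis literally only gives us $1$-types, and one needs to bootstrap to definability of the finitary types that actually arise, which is where rosiness of $A_{\ind}$ and the cited Pillay fact ($1$-stably embedded $\Rightarrow$ stably embedded, for definable rosy sets) must be invoked carefully — and the delicate point is that here $A$ is a \emph{model} rather than a definable set, so the compactness-over-a-definable-set argument that usually underlies "$(1,1)\Rightarrow(1,\infty)$" has to be replaced by the \emph{uniform} $1$-stable-embeddedness hypothesis and the uniform honest definitions of Theorem \ref{thm: Uniform honest definitions}, much as in the remark in the introduction that Pillay's theorem holds over a model provided one assumes \emph{uniform} $1$-stable embeddedness.
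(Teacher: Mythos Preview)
This statement is recorded in the paper as a \emph{Fact} due to Pillay, not proved there; the paper's own contribution is the Theorem immediately after (models instead of definable sets, under a uniformity hypothesis), and your proposal drifts between the two. More importantly, your argument contains a circularity at its core. In the back-and-forth you need, for a new $b\in N$, that $\tp_L(ab/A')$ is definable over $a$; you justify this by ``rosiness to upgrade $1$-stable embeddedness of $A_{\ind}$ to stable embeddedness \`a la Pillay'' --- but that upgrade \emph{is} the Fact being proved. In the boundedness arguments you are imitating (Lemma~\ref{lem: formulas are D-bounded}, the distal theorem), stable embeddedness of the named set is a hypothesis, not the conclusion; you cannot borrow it here. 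You also conflate $(1,\infty)$ with $(\infty,1)$: the hypothesis says that externally definable subsets of $A$ in one free variable are internally definable, not that $1$-types of elements of $M$ over $A$ are definable.

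The actual argument (Pillay's, and the paper's adaptation in the subsequent Theorem) is entirely different in shape. The key lemma concerns externally definable \emph{functions}: any $L(M)$-definable $f:A\to Z$ into a sort of $A_{\ind}^{\eq}$ has $L(A)$-definable graph. First, $\NIP$ over $A$ (via an indiscernible/alternation argument) produces an $L(A)$-definable relation $R$ with uniformly finite fibers containing the graph of $f$; second, rosiness of $A_{\ind}$ cuts the fiber bound down to $1$. With this lemma one inducts on arity: an externally definable $X\subseteq A^{n+1}$ is sliced along the last coordinate, each slice lies in $A^n$ and is internally definable by induction via a single $\psi(\bar x,z)$, and the map sending the slicing coordinate to the canonical parameter of its slice is an externally definable function, hence internally definable by the lemma; thus $X$ is defined by $\psi(\bar x,f(x_{n+1}))$. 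Rosiness enters only in the fiber-reduction step, not through any stable-embeddedness bootstrap.
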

In fact, one can replace the definable set $A$ with a model, at the
price of requiring that $\left(1,\infty\right)$-stable embeddedness
is uniform. We explain briefly how to modify Pillay's argument.
\begin{thm}
Let $A\preceq M$. Assume that $A_{\ind}$ is rosy, $M$ is $\NIP$
over $A$ and $A$ is \textbf{uniformly} $\left(1,\infty\right)$-stably
embedded. Then $A$ is uniformly stably embedded.\end{thm}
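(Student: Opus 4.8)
The plan is to adapt Pillay's proof of the quoted Fact, the one substantive change being that ``uniform'' must become an input to the argument and an invariant of it, rather than a free consequence of $A$ being definable. First I would reduce the conclusion: $A$ is uniformly stably embedded precisely when for every length $\ell$ and every $\phi(x;y)$ with $|x|=\ell$ there is $\chi_\phi(x;z)$, depending only on $\phi$, such that for all $c$ there is $b\in A$ with $\phi(A^{\ell};c)=\chi_\phi(A^{\ell};b)$; equivalently, every type $\tp(c/A)$ with $c\in M$ is definable over $A$ via a scheme depending only on the formula (not on $c$). The passage from $c\in M$ to $c\in\M$ is harmless given the uniformity, by saturating $M$ and applying compactness.

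I would then induct on $\ell$, the case $\ell=1$ being exactly the hypothesis of uniform $(1,\infty)$-stable embeddedness. For the inductive step fix $\phi(x_1,\dots,x_\ell;y)$ and $c\in M$. Reading $\phi$ as a formula in the single home variable $x_1$ with parameter $(x_2\dots x_\ell\,y)$, uniform $(1,\infty)$-stable embeddedness gives $\chi(x_1;w)$, depending only on $\phi$, with $\phi(A;\bar a,c)=\chi(A;b_{\bar a})$ for some $b_{\bar a}\in A$, for every $\bar a=(a_2,\dots,a_\ell)\in A^{\ell-1}$. The point to establish is that these sections can be chosen \emph{coherently}: that there is an $A$-definable assignment $\bar a\mapsto f(\bar a)\in A^{\eq}$ with $\chi(A;f(\bar a))=\phi(A;\bar a,c)$ for all $\bar a\in A^{\ell-1}$, whose definition has complexity bounded in terms of $\phi$ only. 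Granting this, $\phi(A^{\ell};c)=\{(a_1,\bar a):\ \models\chi(a_1;f(\bar a))\}$ is $A$-definable by a formula of $\phi$-bounded shape, and the induction closes.

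To produce $f$ I would run Pillay's descent, now inside $A_{\ind}^{\eq}$ and while tracking uniformity, and with ``$A$ definable'' replaced by ``$A\preceq M$ and $A$ small''. Rosiness of $A_{\ind}$ makes thorn-forking in $A_{\ind}^{\eq}$ available with local character, so there is a \emph{small} $B\subseteq A$ over which $\tp(c/A)$ is, as far as $\phi$ can see, based: $\bar a$ is thorn-independent from $c$ over $B$ for all but a lower-dimensional set of $\bar a\in A^{\ell-1}$. For such generic $\bar a$ one has $f(\bar a)\in\dcl^{\eq}(\bar a c)\cap A^{\eq}$, and thorn-independence forces $f(\bar a)\in\acl^{\eq}(B\bar a)$; the hypothesis ``$M$ is NIP over $A$'' is then what upgrades this --- together with the inductive hypothesis applied over the small set $B$ and the uniform honest definitions of Theorem~\ref{thm: Uniform honest definitions} --- to the statement that $f$ coincides on \emph{all} of $A^{\ell-1}$ with a single $B$-definable function whose complexity is bounded in $\phi$. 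Naming $B$ inside $A_{\ind}$ changes nothing, since ``uniformly stably embedded'' is insensitive to naming a small subset of $A$; and by compactness and NIP over $A$ the size of $B$, the number of thorn-conjugates, and the complexity of the definition of $f$ are all bounded in terms of $\phi$ alone, uniformly in $c$. This gives $f$, hence the inductive step, hence the theorem.

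The main obstacle is precisely controlling uniformity in that descent from $A$ down to the small base $B$. For a definable $A$ (Pillay's setting) one performs the entire descent over $\emptyset$ and compactness hands the uniformity back for free; for a model $A$ one must instead locate $B$, justify absorbing it, and verify that the imaginaries and formula-complexity produced along the way do not grow with $c$ --- and it is exactly this last requirement that forces the hypothesis to be \emph{uniform} $(1,\infty)$-stable embeddedness rather than plain $(1,\infty)$-stable embeddedness. With that strengthening in hand, rosiness of $A_{\ind}$ supplying the independence calculus and NIP over $A$ supplying the definability, the rest of Pillay's argument transfers without change.
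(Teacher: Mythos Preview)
Your high-level induction and the reduction to showing that the section map $\bar a\mapsto\ulcorner\chi(-;b_{\bar a})\urcorner$ is $A$-definable are exactly right, and they match the paper's outline. The gap is in how you propose to carry out that last step. You invoke local character of thorn-forking in $A_{\ind}^{\eq}$ to find a small $B\subseteq A$ over which ``$\tp(c/A)$ is based'' and over which generic $\bar a$ are thorn-independent from $c$; but $c\notin A$, so thorn-forking in $A_{\ind}$ says nothing about $c$ directly, and there is no canonical base for $\tp(c/A)$ to fall back on until you have already proved definability. The subsequent claim that thorn-independence forces $f(\bar a)\in\acl^{\eq}(B\bar a)$ is therefore unsupported as stated.

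In the paper (following Pillay), the order of the two ingredients is reversed and each is used for a specific, concrete purpose. First, \emph{NIP over $A$} alone shows that any externally $L(M)$-definable function $f\colon A\to A_{\ind}^{\eq}$ is bounded-to-one over $A$: otherwise one builds an $L$-indiscernible sequence $(a_ib_i)$ in $A$ with $b_i=f(a_i)\notin\acl_L((a_jb_j)_{j<i}a_i)$, and then, exactly as in Pillay's Lemma~3.2, one perturbs the $b_i$ on odd indices to witness infinite alternation of the defining $L$-formula over $A$ --- contradicting NIP over $A$. Compactness then yields a single $R(x,y)\in L(A)$ and $k<\omega$ with $R(x,f(x))$ and $\exists^{\le k}y\,R(x,y)$ on $A$. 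Only \emph{after} this does rosiness of $A_{\ind}$ enter, and its role is not local character but the passage from $\acl$ to $\dcl$ (Pillay's Lemma~3.3), upgrading $k$ to $1$. With $f$ now genuinely $A$-definable, the induction closes immediately via Shelah's expansion theorem; no auxiliary small $B$ needs to be named, and no appeal to honest definitions is required. Your sketch is missing precisely this alternation argument --- the one place where ``NIP over $A$'' does real work --- and misattributes to rosiness a step it cannot perform.
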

\begin{proof}
Assume that $A\preceq M$ is a counterexample to the theorem. We consider
$(M,A)$ as a pair with $\P$ naming $A$. As $A$ is a model, it
follows that $A_{\ind}$ eliminates quantifiers, thus \emph{every
set definable in $A_{\ind}$ is given by the trace of an $L$-formula}.
As there are two languages $L$ and $L_{\P}$ around, we make a terminology
clarification: induced structure is always meant to be with respect
to $L$ formulas, and $(n,m)$-stable embeddedness always means that
sets externally definable by $L$-formulas are internally definable
by $L$-formulas.
\begin{claim*}
We may assume that $\left(M,A\right)$ is saturated (as a pair in
the $L_{\P}$ language).\end{claim*}
\begin{proof}
Just let $\left(N,B\right)\succ\left(M,A\right)$ be a saturated extension.
Of course, $A$ is uniformly $\left(n,\infty\right)$-stably embedded
in $M$ if and only if $B$ is uniformly $\left(n,\infty\right)$-stably
embedded in $N$. Notice that $B_{\ind}\succ A_{\ind}$, thus $B_{\ind}$
is rosy. Finally, $N$ is still $\NIP$ over $B$ with respect to
$L$-formulas.
\end{proof}

\begin{claim*}
Let $f:\, A\to Z$ be an $L(M)$-definable function (namely the trace
on $A$ of an $L(M)$-definable relation which happens to define a
function on $A$), where $Z$ is some sort in $A_{\ind}^{\eq}$ .
Then there is an $L(A)$-definable relation $R(x,y)$ and $k<\omega$
such that $(M,A)\models\forall x\in\P\,\left(R(x,f(x))\land\exists^{\leq k}y\in\P\, R(x,y)\right)$.\end{claim*}
\begin{proof}
Let the graph of $f$ be defined by $f(x,y,e)\in L(M)$. Let $\kappa$
be large enough. Working entirely in $A_{\ind}$, assume that we could
choose $\left(a_{i}b_{i}\right)_{i<\kappa}$ in $A$ such that $b_{i}=f(a_{i})$
and $b_{i}\notin acl_{L}\left(\left(a_{j}b_{j}\right)_{j<i}a_{i}\right)$
for all $i$. Following Pillay's proof of \cite[Lemma 3.2]{MR2830421}
and using saturation of $A_{\ind}$, we may assume that $\left(a_{i}b_{i}\right)$
is $L$-indiscernible and then find $\left(b_{i}'\right)$ in $A$
such that $b_{i}'=b_{i}$ if and only if $i$ is even, and $tp_{L}\left(\left(a_{i}b_{i}\right)_{i<\kappa}\right)=tp_{L}\left(\left(a_{i}b_{i}'\right)_{i<\kappa}\right)$,
so still $L$-indiscernible. But then $\left(M,A\right)\models f(a_{i},b_{i}',e)$
if and only if $i$ is even -- a contradiction to $M$ being NIP over
$A$ with respect to $L$-formulas.

So, by compactness we find some $R(x,y)\in L(A)$ and $k<\omega$
such that $\left(M,A\right)\models\forall x\in\P\, R(x,f(x))\land\exists^{\leq k}y\in\P\, R(x,y)$. 
\end{proof}

\begin{claim*}
In the previous claim, we can take $k=1$.\end{claim*}
\begin{proof}
Pillay's proof of \cite[Lemma 3.3]{MR2830421} goes through again,
with $\acl$, $\dcl$ and forking all considered inside of the $L$-induced
structure on $A$ (which is saturated and eliminates quantifiers).
\end{proof}
Finally, we conclude by induction on the dimension of the externally
definable sets. So let $X=A^{n+1}\cap\phi(x_{0},...,x_{n},x_{n+1},c)$
be given, and assume inductively that $A$ is uniformly $\left(n,\infty\right)$-stably
embedded (the base case given by the assumption). For any $a\in A$,
let $X_{a}=A^{n}\cap\phi(x_{0},...,x_{n},a,c)$. By the inductive
assumption, there is some $\psi(x_{0},...,x_{n},z)$ such that for
any $a\in A$, $X_{a}=A^{n}\cap\psi(x_{0},...,x_{n},b)$ for some
$b\in A$. By Shelah's expansion theorem, the function $f:\, A\to Z$
sending $a$ to $\left[b\right]_{\psi}$ (the canonical parameter
of $\psi(x_{0},...,x_{n},b)$) is externally definable. Thus, by the
previous claim, it is actually definable with parameters from $A$.
It follows that $X$ is defined by $\psi(x_{0},...,x_{n},f(x_{n+1}))$.
\end{proof}
As an application, we obtain a new proof of a theorem of Marker and
Steinhorn \cite{MR1264974}.
\begin{cor}
Let $T$ be $o$-minimal and $M\models T$. Assume that the order
on $M$ is complete. Then all types over $M$ are uniformly definable.
\end{cor}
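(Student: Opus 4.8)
The plan is to apply the theorem just proved, with its ``$A$'' taken to be our model $M$ and its ambient model an elementary extension $N\succ M$. Two of the three hypotheses are cheap: ``$N$ is $\NIP$ over $M$'' holds because $T$, being $o$-minimal, is $\NIP$; and $M_{\ind}$ is rosy because $o$-minimal theories are rosy (of finite thorn-rank) --- the only point needing care is that the theorem's proof invokes $\acl$, $\dcl$ and forking only inside $M_{\ind}$ and $M_{\ind}^{\eq}$ on sorts where, by the inductive hypothesis of uniform $(n,\infty)$-stable embeddedness carried along there, the induced structure is interdefinable over $M$ with the pure $o$-minimal $L$-structure. So the substance of the proof is to check that $M$ is uniformly $(1,\infty)$-stably embedded, equivalently that $1$-types over $M$ admit uniform definitions.

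I would argue as follows. By $o$-minimality a complete type $p(x)\in S_1(M)$ is determined by the cut it induces on the linear order $M$: any $\phi(x,\bar b)$ with $\bar b\in M$ defines in $\mathbb{M}$ a finite union of points and open intervals whose endpoints lie in $\dcl(\bar b)\subseteq M$ (here using that $M$ is a model) together with $\pm\infty$, so membership of $\phi(x,\bar b)$ in $p$ is decided by the position of the cut relative to these endpoints of $M$. Now use the hypothesis that the order on $M$ is complete: since $M$ is moreover densely ordered it has no gaps, so every cut of $M$ is $\pm\infty$, or the cut immediately above, or the cut immediately below, some $m\in M$. Consequently every $1$-type over $M$ is one of $x=m$, the type at $+\infty$ or $-\infty$, or the type of an element immediately above or immediately below some $m\in M$, and each of these is definable over $M$ --- for instance the type immediately above $m$ is defined by ``$y\le m$'' on instances of $x>y$ and by ``$y>m$'' on instances of $x<y$.

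For the uniformity, apply uniform finiteness: for each $\phi(x,z)$ with $|x|=1$ there is $n_\phi<\omega$ such that $\phi(\mathbb{M},c)\cap M$ is always a union of at most $n_\phi$ points and intervals, and by the previous paragraph the endpoints of $\phi(\mathbb{M},c)$ induce rational cuts on $M$, so this union may be rewritten with endpoints in $M\cup\{\pm\infty\}$. Hence $\phi(M,c)=\psi(M,\bar b)$, where $\psi$ is the fixed formula describing a union of $n_\phi$ intervals and $\bar b\in M$ has length at most $2n_\phi$; this is exactly uniform $(1,\infty)$-stable embeddedness of $M$. The theorem then yields that $M$ is uniformly stably embedded, and since $\dcl(M)=M$ this says precisely that every $\phi(x,c)$ over $\mathbb{M}$ satisfies $\phi(M,c)=\psi(M,b)$ with $b\in M$ and $\psi$ depending only on $\phi$ --- i.e.\ all types over $M$ are uniformly definable, which is the Marker--Steinhorn theorem. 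The main obstacle is the middle step: extracting uniform definitions of all $1$-types over the Dedekind-complete model $M$; once this (and the routine rosyness verification) is done, the result falls out of the theorem.
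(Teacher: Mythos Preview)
Your approach is correct and is exactly what the paper intends: the corollary is stated without proof, immediately after the theorem, as an application of it, so the implicit argument is precisely to verify the three hypotheses of the theorem with $A=M$ sitting inside some $N\succ M$. Your verification of uniform $(1,\infty)$-stable embeddedness via Dedekind completeness and uniform finiteness is the expected one.

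One small simplification: your parenthetical about $M_{\ind}$ overcomplicates matters. Since $A=M$ is an elementary submodel, the theorem's own proof already notes that $A_{\ind}$ eliminates quantifiers and every induced-definable set is the trace of an $L$-formula; hence $M_{\ind}$ is just $M$ as an $L$-structure, which is $o$-minimal and therefore rosy. There is no need to invoke the inductive $(n,\infty)$ hypothesis to control the induced structure.
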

\bibliographystyle{alpha}
\bibliography{Everything}

\end{document}